\def\rr{{\mathbb R}}
\def\rn{{{\rr}^n}}
\def\zz{{\mathbb Z}}
\def\nn{{\mathbb N}}
\def\ch{{\mathcal H}}
\def\cx{{\mathcal X}}
\def\bd{{ \mathbb D }}
\def\cd{{ \mathcal D }}
\def\cm{{\mathcal M}}
\def\cb{{\mathcal B}}
\def\ccc{{\mathcal C}}
\def\fz{\infty}
\def\az{\alpha}
\def\lz{\lambda}
\def\dz{\delta}
\def\ez{\epsilon}
\def\kz{\kappa}
\def\bz{\beta}
\def\gz{{\gamma}}
\def\vz{\varphi}
\def\sz{\sigma}
\def\wz{\widetilde}
\def\ls{\lesssim}
\def\gs{\gtrsim}
\def\r{\right}
\def\lf{\left}
\def\diam{{\mathop\mathrm{\,diam\,}}}
\def\dist{{\mathop\mathrm{\,dist\,}}}
\def\loc{{\mathop\mathrm{\,loc\,}}}
\def\lip{{\mathop\mathrm{\,Lip}}}
\def\bint{{\ifinner\rlap{\bf\kern.25em--}
\int\else\rlap{\bf\kern.45em--}\int\fi}\ignorespaces}
\def\dbint{\displaystyle\bint}
\def\bbint{{\ifinner\rlap{\bf\kern.25em--}
\hspace{0.078cm}\int\else\rlap{\bf\kern.45em--}\int\fi}\ignorespaces}
\def\dsup{\displaystyle\sup}
\newtheorem{thm}{Theorem}[section]
\newtheorem{lem}{Lemma}[section]
\newtheorem{rem}{Remark}[section]
\newtheorem{cor}{Corollary}[section]
\newtheorem{defn}{Definition}[section]
\numberwithin{equation}{section}
\begin{document}

\arraycolsep=1pt

\title{\Large\bf  Characterizations of Besov and Triebel-Lizorkin Spaces
on Metric Measure Spaces
\footnotetext{\hspace{-0.35cm}
\noindent{2000 {\it Mathematics Subject Classification:}} 42B35 
\endgraf  {\it Key words and phases:}
Besov space, Triebel-Lizorkin space, Haj\l asz-Besov space, Haj\l asz-Triebel-Lizorkin space,
metric measure space, sharp maximal function
\endgraf Pekka Koskela and
Yuan Zhou were supported by the Academy of Finland grants 120972, 131477.
\endgraf Amiran Gogatishvili was partially supported by
the grant no. 201/08/0383 of the Grant Agency of the Czech Republic, by the   Institutional Research Plan no. AV0Z10190503 of AS CR.
\endgraf $^\ast$ Corresponding author. 
}}
\author{Amiran Gogatishvili,
Pekka Koskela and Yuan Zhou$^\ast$}
\date{ }
\maketitle

\begin{center}
\begin{minipage}{13.5cm}\small
{\noindent{\bf Abstract}\quad On a metric measure space
satisfying the doubling property, we establish several optimal characterizations of
 Besov and Triebel-Lizorkin spaces, including a pointwise characterization.
Moreover, we discuss their (non)triviality  under
a Poincar\'e inequality.
 }
\end{minipage}
\end{center}

\medskip

\section{Introduction\label{s1}}

\hskip\parindent

Let $(\cx,\,d)$ be a metric space and $\mu$ be
a regular Borel measure on $\cx$ such that
all balls defined by $d$
have finite and positive measures, and
assume that $\mu$ satisfies a {\it doubling property}:
there exist constants $C_1>1$ and $n>0$ such that for all $x\in\cx$,
$r\in(0,\,\fz)$ and $\lz\in(1,\,\fz)$,
\begin{equation*}
\mu(B(x,\,\lz r))\le C_1\lz^n\mu(B(x,\,r)).
\end{equation*}

Recall the following definition of Besov spaces from \cite{gks09}.

\begin{defn}\label{d1.1}\rm
Let $s\in(0,\fz)$ and $p,\,q\in(0,\fz]$.
The {\it homogeneous Besov space} $\dot B^s_{p,\,q}(\cx)$ is defined to be  the {\it collection of
all} $u\in L^p_\loc(\cx)$ such that
\begin{equation*}
\|u\|_{\dot B^s_{p,\,q}(\cx)}\equiv\lf(\int_0^\fz\lf(\int_\cx\bint_{B(x,\,t)}
|u(x)-u(y)|^p\,d\mu(y)d\mu(x)\r)^{q/p}
\frac{dt}{t^{1+sq}}\r)^{1/q}<\fz
\end{equation*}
with the usual modification made when $p=\fz$ or $q=\fz$.
\end{defn}

Above, $u\in L^p_\loc(\cx)$ requires that $u\in L^p(B)$ for each ball $B$.

Observe that functions in $\dot B^s_{p,\,q}(\cx)$ have the smoothness of order $s$
as measured by $$t^{-s}\lf(\bint_{B(x,\,t)}|u(x)-u(y)|^p\,d\mu(y)\r)^{1/p}.$$
Recall that, in the literature, there are several ways to measure the smoothness of functions.
For example, letting $s\in[0,\,\fz)$, $\ez\in[0,\,s]$ and $\sz\in(0,\,\fz)$,
 for all measurable functions $u$,  set
\begin{eqnarray*}
 &C^{s,\,\sz}_t(u)(x)\equiv t^{-s}\lf(\dbint_{B(x,\,t)}|u(x)-u(y)|^\sz\,d\mu(y)\r)^{1/\sz},&\\
 &A^{s,\,\sz}_t(u)(x)\equiv t^{-s}\lf(\dbint_{B(x,\,t)}|u(y)-u_{B(x,\,t)}|^\sz\,d\mu(y)\r)^{1/\sz}, &\\
&I^{s,\,\sz}_t(u)(x)\equiv t^{-s}\lf(\displaystyle\inf_{c\in\rr}\dbint_{B(x,\,t)}|u(y)-c|^\sz\,d\mu(y)\r)^{1/\sz},& \\
 &S^{s,\,\ez,\,\sz}_t(u)(x)\equiv t^{(\ez-s)}\dsup_{r\in(0,\,t]}
r^{-\ez}\lf(\inf_{c\in\rr}\dbint_{B(x,\,r)}|u(y)-c|^\sz\,d\mu(y)\r)^{1/\sz}&
\end{eqnarray*}
for all $x\in\cx$ and $t\in(0,\,\fz)$.

The first purpose of this paper is to show that the smoothness of functions
in Besov spaces   can be measured by
the above quantatives with optimal parameters.
To this end,  we introduce the following spaces of Besov type.
In what follows, for our convenience, we denote by $\vec C^{s,\,\sz}$ the operator that maps each $u\in L^\sz_\loc(\cx)$
into a measurable function  $\vec C^{s,\,\sz}(u)$ on $\cx\times(0,\,\fz)$  defined by
$\vec C^{s,\,\sz}(u)(x,\,t)\equiv C^{s,\,\sz}_t(u)(x)$
for all $x\in\cx$ and $t\in(0,\,\fz)$.
We define $\vec A^{s,\,\sz}$, $\vec I^{s,\,\sz}$ and $\vec S^{s,\,\ez,\,\sz}$ analogously.

\begin{defn}\label{d1.2}\rm
Let $s,\,\sz\in(0,\,\fz)$, $\ez\in[0,\,s]$ and
$p,\,q\in(0,\fz]$.
For $\vec E=\vec C^{s,\,\sz},
\vec A^{s,\,\sz}$, $\vec I^{s,\,\sz}$ or $\vec S^{s,\,\ez,\,\sz}$,
  the {\it homogeneous space $\vec E\dot B_{p,\,q}(\cx)$ of Besov type} is defined to be the
{\it collection of all} $u\in L^\sz_\loc(\cx)$ such that
\begin{equation*}
\|u\|_{\vec E\dot B_{p,\,q}(\cx)}\equiv\lf(\int_0^\fz  \|\vec E(u)(\cdot,\,t)\|_{L^p(\cx)}^{q}
\frac{dt}{t}\r)^{1/q}<\fz
\end{equation*}
with the usual modification made when $p=\fz$ or $q=\fz$.
\end{defn}

The main results of this paper read as follows.
For our convenience, for $s\in(0,\,\fz)$ and $p\in(0,\,\fz]$,
we always set    \begin{equation}\label{e1.1}
 p_\ast(s)\equiv \lf\{\begin{array}{ll}np/(n-ps),\quad&  if\ \  p<n/s;\\
 \fz,&  if\ \ p\ge n/s.     \end{array}\r.
\end{equation}

\begin{thm}\label{t1.1} Let $s\in(0,\fz)$ and $p,\,q\in(0,\fz]$.

(i) If $\sz\in(0,\,p]$, then $\dot B^s_{p,\,q}(\cx)=\vec C^{s,\,\sz}\dot B_{p,\,q}(\cx)$.

(ii) If $ \sz \in(0,\,p_\ast(s))$, then $\dot B^s_{p,\,q}(\cx)
 =\vec I^{s,\,\sz}\dot B_{p,\,q}(\cx)$.

(iii) If $\ez\in[0,s)$ and $ \sz \in(0,\,p_\ast(s))  $,
then $\dot B^s_{p,\,q}(\cx)=\vec S^{s,\,\ez,\,\sz}\dot B_{p,\,q}(\cx)$.

(iv) If $p\in(n/(n+s),\,\fz]$ and $ \sz \in(0,\,p_\ast(s))$, then
$\dot B^s_{p,\,q}(\cx)=\vec A^{s,\,\sz}\dot B_{p,\,q}(\cx).$

\noindent Moreover,  the ranges of  $\ez$ and $\sz$  above are optimal in the following sense.

(v) Let $s\in(0,\,1)$, $p\in(0,\,n/s)$ and  $\sz>p_\ast(s)$.
Then there exists a function  $u$ such that for all $q\in(0,\,\fz]$,
$u\in\dot B^s_{p,\,q}(\rn)$  but
$u\notin L^\sz_\loc(\rn)$, and hence, for $\vec E= \vec A^{s,\,\sz}$, $\vec I^{s,\,\sz}$ or $\vec S^{s,\,\ez,\,\sz}$,
$u\notin \vec E\dot B_{p,\,q}(\rn)$.

(vi) Let $p\in(0,\,\,\fz)$, $\sz\in(p,\,\fz)$ and $s\in(0,\,n/p-n/\sz)\cap(0,\,1)$.
Then there exists a function  $u$ such that for all
 $q\in(0,\,\fz]$,
$u\in\dot B^s_{p,\,q}(\rn)$
but $u\notin \vec C^{s,\,\sz}\dot B_{p,\,q}(\rn)$.

(vii) Let $s\in(0,\,1)$ and $p\in(0,\,\fz)$.
Then there exists a function $u\in \dot B^s_{p,\,p}(\rn)$ with $u\notin \vec S^{s,\,s,\,p}\dot B_{p,\,p}(\rn)$.
\end{thm}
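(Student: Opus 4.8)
The plan is to exploit a structural feature that distinguishes the endpoint $\ez=s$ from the range $\ez<s$ handled in part (iii): the map $t\mapsto S^{s,s,p}_t(u)(x)=\sup_{r\in(0,t]}r^{-s}\lf(\inf_{c\in\rr}\bint_{B(x,r)}|u(y)-c|^p\,d\mu(y)\r)^{1/p}$ is \emph{nondecreasing} in $t$, whereas the corresponding quantities for $\vec C^{s,\sz}$, $\vec I^{s,\sz}$, $\vec A^{s,\sz}$ and for $\vec S^{s,\ez,\sz}$ with $\ez<s$ are not. Consequently $F(t)\equiv\|S^{s,s,p}_t(u)\|_{L^p(\rn)}^p$ is nondecreasing, so if $F(t_0)>0$ for some $t_0\in(0,\fz)$ then $\|u\|_{\vec S^{s,s,p}\dot B_{p,p}(\rn)}^p=\int_0^\fz F(t)\,\frac{dt}{t}\ge F(t_0)\int_{t_0}^\fz\frac{dt}{t}=\fz$. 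Hence $u\in\vec S^{s,s,p}\dot B_{p,p}(\rn)$ would force $F\equiv0$, i.e. $\inf_{c\in\rr}\bint_{B(x,r)}|u(y)-c|^p\,d\mu(y)=0$ for a.e.\ $x$ and every $r>0$, i.e. $u$ is a.e.\ constant on every ball and therefore (by connectedness of $\rn$) a.e.\ constant. So it suffices to exhibit one \emph{nonconstant} function in $\dot B^s_{p,p}(\rn)$.

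For this I would take $u$ to be a fixed nonzero compactly supported Lipschitz function, say $u(x)=\max\{0,1-|x|\}$, and verify directly that $u\in\dot B^s_{p,p}(\rn)$ (this is classical for $s\in(0,1)$, but easy to redo). Split the defining integral at $t=1$. For $t\le1$: on $B(x,t)$ one has $|u(x)-u(y)|\ls t$, and the inner double integral vanishes unless $x\in B(0,1+t)$, a set of bounded measure, so it is $\ls t^p$; then $\int_0^1 t^p\,\frac{dt}{t^{1+sp}}<\fz$ because $p(1-s)>0$. For $t\ge1$: $|u(x)-u(y)|\le2\|u\|_\fz$, the integrand vanishes unless $x\in B(0,1+t)$ (measure $\ls t^n$), and on that set $\bint_{B(x,t)}|u(x)-u(y)|^p\,dy\ls t^{-n}$, so the inner integral is $\ls1$ uniformly in $t$ and $\int_1^\fz\frac{dt}{t^{1+sp}}<\fz$ since $sp>0$. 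Thus $u\in\dot B^s_{p,p}(\rn)$.

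Combining the two steps: $u\in\dot B^s_{p,p}(\rn)$, but since $u$ is nonconstant the reduction of the first paragraph gives $\|u\|_{\vec S^{s,s,p}\dot B_{p,p}(\rn)}=\fz$, so $u\notin\vec S^{s,s,p}\dot B_{p,p}(\rn)$, which proves (vii). In particular this shows that the hypothesis $\ez\in[0,s)$ in part (iii) cannot be relaxed to $\ez\le s$, and explains why: at $\ez=s$ the weighted Hardy-type estimate behind (iii) degenerates, and the nondecreasing behavior in $t$ makes the $\int_0^\fz dt/t$ diverge at infinity for every nonconstant $u$.

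I do not expect a serious obstacle. The only point needing a little care is the large-$t$ part of the check $u\in\dot B^s_{p,p}(\rn)$, where the compact support of $u$ and the polynomial volume growth of $\rn$ must be used together (they cancel to leave a bounded inner integral); everything else is the elementary monotonicity argument above. If one prefers, membership $u\in\dot B^s_{p,p}(\rn)$ can instead be invoked from the classical identification of the difference-quotient Besov space with $\dot B^s_{p,p}(\rn)$ for $s\in(0,1)$.
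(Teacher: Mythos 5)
Your proposal addresses only part (vii); parts (i) through (vi) of the theorem are not treated, so as a proof of the full statement it is incomplete. For (vii), however, the argument is correct and takes a genuinely different and, in fact, more conceptual route than the paper. The paper exhibits the explicit singular function $u(x)=|x|^{-\az}(\log(2/|x|))^\bz\chi_{B(0,1)}(x)+\chi_{\rn\setminus B(0,1)}(x)$ with $\az=n/p-s$ and $\bz\in(-2/p,-1/p)$, verifies $u\in\dot N^s_{p,p}(\rn)=\dot B^s_{p,p}(\rn)$ by constructing a fractional Haj\l asz gradient, and derives the divergence of $\|u\|_{\vec S^{s,s,p}\dot B_{p,p}(\rn)}$ from a borderline calculation in which $p\bz\in(-2,-1)$ is tuned so that one integral converges while the other diverges. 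You replace all of this by the structural observation that at $\ez=s$ the prefactor $t^{\ez-s}$ collapses to $1$, so $t\mapsto S^{s,s,p}_t(u)(x)$ is nondecreasing, hence $F(t)=\|S^{s,s,p}_t(u)\|_{L^p(\rn)}^p$ is nondecreasing and $\int_0^\fz F(t)\,dt/t<\fz$ forces $F\equiv0$, which forces $u$ to be a.e.\ constant. This is strictly stronger than (vii): it shows $\vec S^{s,s,p}\dot B_{p,p}(\rn)$ contains only constants, so any nonconstant element of $\dot B^s_{p,p}(\rn)$ works, and your tent-function verification (split at $t=1$, using the compact support for $t\ge1$) is fine. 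It also isolates why $\ez<s$ is needed in (iii): only then does $t^{\ez-s}$ decay and the $dt/t$-integral have a chance to converge at infinity. Two small things to tidy up in a final write-up: the passage from ``$F\equiv0$'' to ``$u$ a.e.\ constant'' should note that the exceptional null set can be taken uniform over a countable sequence of $t$'s (monotonicity then handles all $t$), and it should be stated that the tail divergence $\int_{t_0}^\fz dt/t=\fz$ relies on working on $\rn$; on a bounded metric measure space the same argument would not run.
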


We point out that it is natural and necessary to consider the full range of $s$  due to
the nontrivial example of nontrivial Besov spaces $\dot B^s_{n/s,\,n/s}(\cx)$ for all
$s\in(0,\,\fz)$ given by Theorem \ref{t4.3}.

Recently, a fractional pointwise gradient was introduced in
\cite{kyz2} to measure the smoothness of functions.

\begin{defn}\label{d1.3}\rm
Let $s\in(0,\,\fz)$ and let $u$ be a measurable function on $\cx$.
A sequence of nonnegative measurable functions,
${\vec g}\equiv\{g_k\}_{k\in\zz}$, is called
a {\it fractional $s$-Haj\l asz gradient } of $u$ if there exists
 $E\subset\cx$ with $\mu(E)=0$ such that for all $k\in\zz$ and
$x,\, y\in\cx\setminus E$
satisfying $2^{-k-1}\le d(x,\,y)< 2^{-k}$,
 \begin{equation*}
|u(x)-u(y)|\le [d(x,\,y)]^s[g_k(x)+g_k(y)].
\end{equation*}
Denote by $\bd^s(u)$ the {\it collection of all fractional $s$-Haj\l asz
gradients of $u$}.
\end{defn}

In fact,
${\vec g}\equiv\{g_k\}_{k\in\zz}$
above is not really a gradient. One should view it, in the Euclidean
setting (at least when $g_k=g_j$ for all $k,j$), as a maximal function
of the usual gradient.

Our second result  characterizes the Besov spaces in Definition \ref{d1.1}
via the fractional Haj\l asz gradient.
In what follows, for $p,\,q\in(0,\,\fz]$ and a sequence
$\vec g=\{g_k\}_{k\in\zz}$ of nonnegative functions, we always write
$\|\{g_j\}_{j\in\zz}\|_{ \ell^q}\equiv\{\sum_{j\in\zz}|g_j|^q\}^{1/q}$ when $q<\fz$
and $\|\{g_j\}_{j\in\zz}\|_{ \ell^\fz}\equiv \sup_{j\in\zz}|g_j|$,
$\|\{g_j\}_{j\in\zz}\|_{\ell^q(L^p(\cx))}\equiv \|\{\|g_j\|_{L^p(\cx )}\}_{j\in\zz}\|_{ \ell^q}.$

\begin{defn}\rm\label{d1.4}
Let $s\in (0,\fz)$  and $p,\,q\in(0,\,\fz]$.
The {\it homogeneous Haj\l asz-Besov space} $\dot N^s_{p,\,q}(\cx)$ is the
{\it space of all measurable functions} $u$ such that
$$\|u\|_{\dot N^s_{p,\,q}(\cx)}\equiv\inf_{\vec g\in \bd^s (u)}\lf\| \vec g
\r\|_{\ell^q(L^p(\cx))}<\fz. $$
\end{defn}

\begin{thm}\label{t1.2}
Let $s\in(0,\fz)$ and  $p,\,q\in(0,\fz]$.     Then $\dot N^s_{p,\,q}(\cx)=\dot B^s_{p,\,q}(\cx).$
\end{thm}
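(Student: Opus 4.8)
The plan is to deduce Theorem \ref{t1.2} from Theorem \ref{t1.1}(i). Put $\sz\equiv\min\{1,\,p\}$ (with $\sz=1$ when $p=\fz$). Since $\sz\le p$, Theorem \ref{t1.1}(i) identifies $\dot B^s_{p,\,q}(\cx)$ with $\vec C^{s,\,\sz}\dot B_{p,\,q}(\cx)$, with equivalent (quasi-)norms, so it suffices to prove that
$$\dot N^s_{p,\,q}(\cx)=\vec C^{s,\,\sz}\dot B_{p,\,q}(\cx)\quad\text{with}\quad\|u\|_{\dot N^s_{p,\,q}(\cx)}\approx\|u\|_{\vec C^{s,\,\sz}\dot B_{p,\,q}(\cx)}.$$
The inequality $\sz\le1$ makes available $(a+b)^\sz\le a^\sz+b^\sz$ and $|a|^\sz\le|b|^\sz+|a-b|^\sz$, while $\sz\le p$ makes $p/\sz\ge1$, permitting Jensen's inequality in averages. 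I will use the doubling property repeatedly, in particular in the form $C^{s,\,\sz}_r(u)(x)\ls C^{s,\,\sz}_{r'}(u)(x)$ whenever $r\le r'\le2r$ (immediate from $\mu(B(x,\,r))\gs\mu(B(x,\,r'))$), which also yields $\sum_{k\in\zz}\|\vec C^{s,\,\sz}(u)(\cdot,\,2^{-k})\|_{L^p(\cx)}^q\ls\int_0^\fz\|\vec C^{s,\,\sz}(u)(\cdot,\,t)\|_{L^p(\cx)}^q\,\frac{dt}{t}$.

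\emph{Step 1: $\vec C^{s,\,\sz}\dot B_{p,\,q}(\cx)\subset\dot N^s_{p,\,q}(\cx)$.} For $u\in\vec C^{s,\,\sz}\dot B_{p,\,q}(\cx)$, the key pointwise estimate is that for all $x,\,y\in\cx$ with $2^{-k-1}\le d(x,\,y)<2^{-k}$,
$$|u(x)-u(y)|\ls d(x,\,y)^s\lf[C^{s,\,\sz}_{2d(x,y)}(u)(x)+C^{s,\,\sz}_{4d(x,y)}(u)(y)\r];$$
indeed, with $B\equiv B(x,\,2d(x,\,y))\ni y$ one has $|u(x)-u(y)|^\sz=\bint_B|u(x)-u(y)|^\sz\,d\mu(z)\le\bint_B|u(x)-u(z)|^\sz\,d\mu(z)+\bint_B|u(z)-u(y)|^\sz\,d\mu(z)$, and the last average is $\ls\bint_{B(y,4d(x,y))}|u(z)-u(y)|^\sz\,d\mu(z)$ by doubling. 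Set $g_k(x)\equiv C\sum_{i=1}^{2}C^{s,\,\sz}_{2^{-k+i}}(u)(x)$, where $C$ absorbs the constant above together with the dilation factors from $C^{s,\,\sz}_{2d(x,y)}(u)(x)\ls C^{s,\,\sz}_{2^{-k+1}}(u)(x)$ and $C^{s,\,\sz}_{4d(x,y)}(u)(y)\ls C^{s,\,\sz}_{2^{-k+2}}(u)(y)$. Then $\vec g\equiv\{g_k\}_{k\in\zz}\in\bd^s(u)$, and $\|g_k\|_{L^p(\cx)}\ls\sum_{i=1}^{2}\|\vec C^{s,\,\sz}(u)(\cdot,\,2^{-k+i})\|_{L^p(\cx)}$; summing in $k$, reindexing, and using the displayed comparison of the dyadic sum with the integral gives $\|\vec g\|_{\ell^q(L^p(\cx))}\ls\|u\|_{\vec C^{s,\,\sz}\dot B_{p,\,q}(\cx)}$, hence $\|u\|_{\dot N^s_{p,\,q}(\cx)}\ls\|u\|_{\vec C^{s,\,\sz}\dot B_{p,\,q}(\cx)}$.

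\emph{Step 2: $\dot N^s_{p,\,q}(\cx)\subset\vec C^{s,\,\sz}\dot B_{p,\,q}(\cx)$.} Let $u$ be measurable with some $\vec g=\{g_k\}_{k\in\zz}\in\bd^s(u)$ satisfying $\|\vec g\|_{\ell^q(L^p(\cx))}<\fz$. First, $u\in L^\sz_\loc(\cx)$: on a ball $B_0$ of radius $R\le1$, fixing $y\in B_0$ with $u(y)$ and all $g_k(y)$ finite, the defining inequality (with $k=k(x,\,y)\ge0$ since $d(x,\,y)<1$) gives, for a.e. $x\in B_0$, $|u(x)|^\sz\ls|u(y)|^\sz+2^{-k(x,y)s\sz}[g_{k(x,y)}(x)^\sz+g_{k(x,y)}(y)^\sz]$; integrating in $x$ and using $\sup_k\|g_k\|_{L^p(\cx)}<\fz$ together with $\sum_{k\ge0}2^{-ks\sz}<\fz$ yields $\int_{B_0}|u|^\sz\,d\mu<\fz$ (no condition on $q$ is needed here). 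Next, for fixed $t>0$ split $B(x,\,t)$ into the annuli $\{2^{-j-1}\le d(x,\,y)<2^{-j}\}$ with $2^{-j}\ls t$, apply the defining inequality on each, and use $(a+b)^\sz\le a^\sz+b^\sz$, that averages of indicators are $\le1$, Fubini and doubling to obtain $\bint_{B(x,t)}|u(x)-u(y)|^\sz\,d\mu(y)\ls\sum_{2^{-j}\ls t}2^{-js\sz}\lf[g_j(x)^\sz+\bint_{B(x,2^{-j})}g_j^\sz\,d\mu\r]$. Raising to the power $p/\sz\ge1$ (Jensen for the dyadic sum and for the inner average), integrating in $x$ and using doubling once more, one gets $\|\vec C^{s,\,\sz}(u)(\cdot,\,t)\|_{L^p(\cx)}^p\ls\sum_{2^{-j}\ls t}(2^jt)^{-s\sz}\|g_j\|_{L^p(\cx)}^p$. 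Taking the $q/p$-th power, integrating against $\frac{dt}{t}$ and discretizing $t\approx2^{-k}$ reduces the claim to the convolution inequality $\sum_{k\in\zz}\lf(\sum_{j\ge k}2^{-(j-k)s\sz}\|g_j\|_{L^p(\cx)}^p\r)^{q/p}\ls\sum_{j\in\zz}\|g_j\|_{L^p(\cx)}^q$, which holds because the kernel $\{2^{-ms\sz}\}_{m\ge0}$ lies in $\ell^1$ (Young's inequality when $q\ge p$) and in $\ell^{q/p}$ (the elementary bound $(\sum_m a_m)^{q/p}\le\sum_m a_m^{q/p}$ when $q<p$). Thus $\|u\|_{\vec C^{s,\,\sz}\dot B_{p,\,q}(\cx)}\ls\|\vec g\|_{\ell^q(L^p(\cx))}$; taking the infimum over $\vec g\in\bd^s(u)$ and recalling $u\in L^\sz_\loc(\cx)$ completes Step 2.

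I expect the main difficulty to be the bookkeeping: one must check that the single choice $\sz=\min\{1,\,p\}$ simultaneously delivers the sub-additivity of $t\mapsto t^\sz$, the local integrability of $\dot N^s_{p,\,q}$-functions (where only the boundedness of $\{\|g_k\|_{L^p(\cx)}\}_k$ and the summability of a geometric weight enter), and Jensen's inequality at the exponent $p/\sz$; and the final Hardy/convolution inequality must be treated uniformly in the regimes $q\ge p$ and $q<p$. The cases $p=\fz$ and/or $q=\fz$ require only the customary replacement of sums and integrals by suprema.
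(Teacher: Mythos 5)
Your argument is correct in its essentials, but the comparison with the paper's proof reveals one genuine simplification and one avoidable detour.

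On the favorable side: for $\vec C^{s,\,\sz}\dot B_{p,\,q}(\cx)\subset\dot N^s_{p,\,q}(\cx)$ you produce a Haj\l asz gradient directly from the two-scale pointwise estimate $|u(x)-u(y)|\ls [d(x,y)]^s\bigl[C^{s,\,\sz}_{2d(x,y)}(u)(x)+C^{s,\,\sz}_{4d(x,y)}(u)(y)\bigr]$, exploiting that $C^{s,\,\sz}_t(u)(x)$ already carries the pointwise value $u(x)$. This is genuinely simpler than the paper's route: there the stronger inclusion $\vec I^{s,\,\sz}\dot B_{p,\,q}(\cx)\subset\dot N^s_{p,\,q}(\cx)$ is what is proved (it is needed for parts (ii)--(iv) of Theorem~\ref{t1.1}), and because $I^{s,\,\sz}_t(u)(x)$ is oscillation-type and does not see $u(x)$, the paper must reach $u(x)$ through the median-value device of Lemmas~\ref{l2.3}--\ref{l2.4}, producing a gradient built from a telescoping sum over all scales $j\ge k-2$ rather than two terms. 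Your Step~2 matches the paper's ``$\sz=p$'' computation in the proof of Theorem~\ref{t2.1}(i). One small precision there: the reference point $y$ in the local-integrability argument should be chosen so that $\sum_{k\ge0}2^{-ks\sz}[g_k(y)]^\sz<\fz$, not merely so that each $g_k(y)$ is finite; this is possible for a.e.\ $y$ by Fubini and $\sup_k\|g_k\|_{L^p(\cx)}<\fz$.

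On the less favorable side: taking $\sz=\min\{1,\,p\}$ and then invoking Theorem~\ref{t1.1}(i) to identify $\vec C^{s,\,\sz}\dot B_{p,\,q}(\cx)$ with $\dot B^s_{p,\,q}(\cx)$ is not literally circular, but it black-boxes a substantial piece of the work. For $p>1$ the inclusion you need, $\vec C^{s,\,1}\dot B_{p,\,q}(\cx)\subset\dot B^s_{p,\,q}(\cx)$, is a nontrivial self-improvement of the inner exponent, and in the paper it is proved precisely by routing through $\dot N^s_{p,\,q}(\cx)$ and the observation $\vec C^{s,\,p}\dot B_{p,\,q}(\cx)=\dot B^s_{p,\,q}(\cx)$ --- in other words, through the very theorem you are proving. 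The clean fix is latent in your own Step~2: take $\sz=p$ when $p<\fz$ (and $\sz=\fz$ when $p=\fz$). Then $\vec C^{s,\,p}\dot B_{p,\,q}(\cx)=\dot B^s_{p,\,q}(\cx)$ is immediate from Definitions~\ref{d1.1} and~\ref{d1.2}, Theorem~\ref{t1.1}(i) is no longer needed, and your argument survives with only cosmetic changes: replace the subadditivity $(a+b)^\sz\le a^\sz+b^\sz$ by $(a+b)^\sz\le 2^{\sz-1}(a^\sz+b^\sz)$ where it appears, and note that $p/\sz=1$ trivializes both Jensen steps in Step~2. This is exactly how the paper derives Theorem~\ref{t1.2} from Theorem~\ref{t2.1}(i).
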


Under the additional assumptions that $\mu$ also satisfies a reverse doubling condition,
$0<s<1$ and $p>n/(n+1)$, $\dot B^s_{p,\,q}(\cx)$ also allows for a kernel function characterization \cite{my09}. 

Theorem \ref{t1.2} and (i) through (iv) of Theorem \ref{t1.1} follow from Theorem \ref{t2.1} below,
whose proof relies on an inequality of
Poincar\'e type established in Lemma \ref{l2.1}  and a pointwise inequality  given by Lemma \ref{l2.3}.
The proof of (v) through (vii) of Theorem \ref{t1.1} will be given at the end of Section \ref{s2}.

Moreover, in Section 3, we state the corresponding results for Triebel-Lizorkin spaces (see Theorem \ref{t3.1}).
As a special case,
we also establish the equivalence between Haj\l asz-Sobolev spaces
and the Sobolev type spaces of Calder\'on and DeVore-Sharpley (see Corollary \ref{c3.1}).

In Section 4, applying the above characterizations, we prove the triviality of
Besov and Triebel-Lizorkin spaces under a suitable Poincar\'e inequality
(see Theorem \ref{t4.1} and Theorem \ref{t4.2}),
and also give some examples of nontrivial Besov and Triebel-Lizorkin
spaces to show the ``necessity'' of such a Poincar\'e inequality (see Theorem \ref{t4.3}).

Finally, we make some conventions. Throughout the paper,
we denote by $C$ a {\it positive
constant} which is independent
of the main parameters, but which may vary from line to line.
Constants with subscripts, such as $C_0$, do not change
in different occurrences. The {\it notation} $A\ls B$ or $B\gs A$
means that $A\le CB$. If $A\ls B$ and $B\ls A$, we then
write $A\sim B$.
For two spaces $X$ and $Y$ endowed with (semi-)norms, the notation $X\subset Y$ means that
$u\in X$ implies that $u\in Y$ and $\|u\|_Y\ls\|u\|_X$,
and the notation $X= Y$ means that $X\subset Y$ and $Y\subset X$.
Denote by $\zz$ the {\it set of integers} and $\nn$ the {\it set of positive integers}.
For any locally integrable function $f$,
we denote by $\bbint_E f\,d\mu$ the {\it average
of $f$ on $E$}, namely, $\bbint_E f\,d\mu\equiv\frac 1{\mu(E)}\int_E f\,d\mu$.

\section{Proofs of Theorem \ref{t1.1} and Theorem  \ref{t1.2} \label{s2}}

We begin with a Poincar\'e type inequality.

\begin{lem}\label{l2.1}
Let $s\in(0,\,\fz)$ and $p\in(0,\,n/s)$.
Then for every pair of $\ez,\ez'\in(0,\,s)$ with $\ez<\ez'$,
there exists a positive constant
$C$ such that for all  $x\in\rn$, $k\in\zz$,
measurable functions $u$ and $\vec g\in \bd^s(u)$,
\begin{eqnarray*}
&&\inf_{c\in\rr}\lf(\dbint_{B(x,\,2^{-k})}
    \lf|u(y)-c\r|^{p_\ast(\ez)}\,d\mu(y)\r)^{1/p_\ast(\ez)}\\
 &&\quad\le C  2^{-k\ez'}\sum_{j\ge k-2}2^{-j(s-\ez')}\lf\{\dbint_{B(x,\,2^{-k+1})}
    [g_j(y)]^p\,d\mu(y)\r\}^{1/p},\nonumber
\end{eqnarray*}
where $p_\ast(\ez)$ is as in \eqref{e1.1}.
\end{lem}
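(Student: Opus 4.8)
The plan is to \emph{linearize} the sequence $\vec g=\{g_j\}$ into a single fractional Haj\l asz gradient of the lower order $\ez'$ adapted to the ball $B:=B(x,2^{-k})$, and then to feed it into the fractional version of the Haj\l asz--Sobolev--Poincar\'e inequality; the slack $\ez<\ez'$ is exactly what turns the critical exponent $p_\ast(\ez')$ attached to the gradient $h$ into the (subcritical) target exponent $p_\ast(\ez)$ on the left.

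Concretely, I would set
\[
h:=\sum_{j\ge k-2}2^{-j(s-\ez')}g_j ,
\]
a nonnegative measurable function on $\cx$ (possibly $+\fz$ on a null set, which is harmless), and first check that $h$ serves as an order-$\ez'$ Haj\l asz gradient of $u$ at the scales relevant to $B$: if $x',y'\notin E$ satisfy $d(x',y')<2^{-k+2}$ and $j_0\in\zz$ is chosen with $2^{-j_0-1}\le d(x',y')<2^{-j_0}$, then $j_0\ge k-2$, so Definition \ref{d1.3} together with $h\ge 2^{-j_0(s-\ez')}g_{j_0}$ gives
\[
|u(x')-u(y')|\le [d(x',y')]^{s}\,2^{j_0(s-\ez')}[h(x')+h(y')]\le [d(x',y')]^{\ez'}[h(x')+h(y')],
\]
the last step using $s-\ez'>0$ and $[d(x',y')]^{s-\ez'}<2^{-j_0(s-\ez')}$. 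It is precisely here that the hypothesis $\ez'<s$ is consumed, both to make the coefficients of the series defining $h$ summable and to carry out this scale comparison; this is what makes the range of $\ez,\ez'$ sharp (cf. Theorem \ref{t1.1}(vii)).

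Next, since the displayed pointwise inequality holds for every pair of points within distance $2^{-k+2}$ (in particular for all pairs in $B(x,2^{-k+1})$) and since $p<n/s<n/\ez'$ forces $p_\ast(\ez')<\fz$, the fractional Haj\l asz--Sobolev--Poincar\'e inequality on $B$ with gradient $h$ yields
\[
\inf_{c\in\rr}\lf(\dbint_{B(x,2^{-k})}|u(y)-c|^{p_\ast(\ez')}\,d\mu(y)\r)^{1/p_\ast(\ez')}\le C\,2^{-k\ez'}\lf(\dbint_{B(x,2^{-k+1})}h^p\,d\mu\r)^{1/p},
\]
with $C=C(C_1,n,p,s,\ez,\ez')$ (for $p_\ast(\ez')<1$ one uses the form of this inequality phrased with $\inf_c$ and obtained via the truncation method). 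Because $\ez<\ez'$ gives $p_\ast(\ez)\le p_\ast(\ez')$ and $\mu(B(x,2^{-k}))<\fz$, H\"older's inequality lets me replace $p_\ast(\ez')$ by $p_\ast(\ez)$ on the left-hand side; and by Minkowski's inequality when $p\ge1$, resp. by $(\sum_j b_j)^p\le\sum_j b_j^p$ when $p<1$,
\[
\lf(\dbint_{B(x,2^{-k+1})}h^p\,d\mu\r)^{1/p}\le\sum_{j\ge k-2}2^{-j(s-\ez')}\lf(\dbint_{B(x,2^{-k+1})}[g_j]^p\,d\mu\r)^{1/p}.
\]
These three reductions combine to give the claimed inequality.

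The step I expect to be the main obstacle is having the fractional Haj\l asz--Sobolev--Poincar\'e inequality available in the full range $p\in(0,n/s)$ on an arbitrary doubling space: for $p\le1$ this is not the elementary Jensen-type Poincar\'e inequality but rests on the truncation technique, and one must track carefully that the dilate on the gradient side can be taken to be $B(x,2^{-k+1})$ with the index range $j\ge k-2$ (this is the bit of bookkeeping that fixes the exact constants in the statement). Modulo that (standard) tool the argument is routine; the only genuinely new ingredient is the elementary linearization producing $h$, which converts the order-$s$ smoothness encoded by $\vec g$ into a pointwise Lipschitz-type estimate of the lower order $\ez'$.
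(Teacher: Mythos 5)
Your aggregation idea is the same one that underlies the paper's proof (which is outsourced to \cite[Lemma 2.3]{kyz2} together with Lemma \ref{l2.2}): linearize $\vec g$ into a single gradient of lower order and feed it into the fractional Haj\l asz--Sobolev--Poincar\'e inequality, then lower the Lebesgue exponent by H\"older. The verification that $h$ is a local $\ez'$-gradient at scales $<2^{-k+2}$ is correct, as is the bookkeeping that the Poincar\'e estimate on $B(x,2^{-k})$ only sees pairs at distance $<2^{-k+2}$, i.e.\ indices $j\ge k-2$.

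However, the last step has a genuine gap when $p<1$. You claim
\[
\lf(\dbint_{B(x,2^{-k+1})}h^p\,d\mu\r)^{1/p}\le\sum_{j\ge k-2}2^{-j(s-\ez')}\lf(\dbint_{B(x,2^{-k+1})}[g_j]^p\,d\mu\r)^{1/p}
\]
using $(\sum_j b_j)^p\le\sum_j b_j^p$. This pointwise inequality only gives
$\bint h^p\le\sum_j 2^{-jp(s-\ez')}\bint g_j^p$, that is $\|h\|_{L^p}\le\bigl(\sum_j\|a_j\|_{L^p}^p\bigr)^{1/p}$ with $a_j=2^{-j(s-\ez')}g_j$; and for $p<1$ the $\ell^p$-quantity $\bigl(\sum\|a_j\|_{L^p}^p\bigr)^{1/p}$ dominates, rather than is dominated by, the $\ell^1$-sum $\sum\|a_j\|_{L^p}$. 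Indeed Minkowski fails for $p<1$: take $a_1=\chi_{[0,1]}$, $a_2=\chi_{[1,2]}$, for which $\|a_1+a_2\|_{L^p}=2^{1/p}>2=\|a_1\|_{L^p}+\|a_2\|_{L^p}$. Since the lemma is asserted for all $p\in(0,n/s)$, this breaks the argument precisely on the sub-$1$ range that the truncation method is supposed to reach.

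The cure is to aggregate at a \emph{strictly lower} order than $\ez'$. Fix $\tau\in[\ez,\ez')$ and set $h_\tau:=\sum_{j\ge k-2}2^{-j(s-\tau)}g_j$; the same scale comparison shows $h_\tau$ is a local $\tau$-gradient, so Lemma \ref{l2.2} and H\"older (using $p_\ast(\ez)\le p_\ast(\tau)$) give
\[
\inf_{c}\lf(\dbint_{B(x,2^{-k})}|u-c|^{p_\ast(\ez)}\r)^{1/p_\ast(\ez)}
\le C\,2^{-k\tau}\lf(\dbint_{B(x,2^{-k+1})}h_\tau^p\r)^{1/p}
\le C\,2^{-k\tau}\lf(\sum_{j\ge k-2}2^{-jp(s-\tau)}b_j^p\r)^{1/p},
\]
with $b_j:=(\bint_{B(x,2^{-k+1})}g_j^p)^{1/p}$. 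Writing $e_i:=2^{-(k-2+i)(s-\ez')}b_{k-2+i}$ for $i\ge0$, one checks by direct computation that
\[
2^{-k\tau}\lf(\sum_{i\ge 0}2^{-(k-2+i)p(s-\tau)}b_{k-2+i}^p\r)^{1/p}
\sim 2^{-k\ez'}\lf(\sum_{i\ge 0}2^{ip(\tau-\ez')}e_i^p\r)^{1/p},
\]
the $k$-dependence cancelling; and since $\tau<\ez'$ the weights $2^{ip(\tau-\ez')}$ decay geometrically, so the crude bound $e_i\le\sum_{\ell}e_\ell$ gives $\bigl(\sum_i 2^{ip(\tau-\ez')}e_i^p\bigr)^{1/p}\le C(\tau,\ez',p)\sum_i e_i=C\sum_{j\ge k-2}2^{-j(s-\ez')}b_j$. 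This recovers the stated $\ell^1$ form for $p<1$; for $p\ge 1$ one may of course keep $\tau=\ez'$ and Minkowski. In short, the slack $\ez<\ez'$ must be spent in the aggregation step, not only in the final H\"older; as written your choice $\tau=\ez'$ gives nothing to spend when $p<1$.
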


Recall that when $s\in(0,\,1]$ and $\cx=\rn$, Lemma \ref{l2.1} was established in \cite[Lemma 2.3]{kyz2}.
Generally, Lemma \ref{l2.1} can be proved by an argument
similar to that of \cite[Lemma 2.3]{kyz2} with the aid of
the following variant of \cite[Theorem 8.7]{h03}.
In what follows, for every $s\in (0,\fz)$ and measurable function $u$ on $\cx$, a non-negative
function $g$ is called an {\it $s$-gradient} of $u$
if there exists a set $E\subset \cx$ with $\mu(E)=0$ such that for all
$x,\ y\in\cx\setminus E$,
\begin{equation} \label{e2.1}
|u(x)-u(y)|\le [d(x,y)]^s[g(x)+g(y)].
\end{equation}
Denote by $\cd^s(u)$ the {\it collection of all $ s$-gradients of $u$}.

\begin{lem}\label{l2.2}
Let $s\in(0,\,\fz)$, $p\in(0,\,n/s)$
and let $p_\ast(s)$ be as in \eqref{e1.1}. Then
there exists a positive constant
$C$ such that for all $x\in\cx$, $r\in(0,\,\fz)$,
and all measurable functions $u$ and $g\in \cd^s(u)$,
\begin{eqnarray*}
   \inf_{c\in\rr}\lf(\dbint_{B(x,\, r)}
    \lf|u(y)-c\r|^{p_\ast(s)}\,d\mu(y)\r)^{1/p_\ast(s)}
 \le C  r^s\lf(\dbint_{B(x,\, 2r)}
    [g(y)]^{p}\,d\mu(y)\r)^{1/p}.
\end{eqnarray*}
\end{lem}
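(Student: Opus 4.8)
The plan is to prove Lemma \ref{l2.2}, a Sobolev--Poincar\'e estimate for $s$-gradients on a doubling metric measure space, following the template of \cite[Theorem 8.7]{h03} but keeping careful track of the exponent $p_\ast(s)=np/(n-ps)$ in place of the usual first-order gain. First I would fix the ball $B=B(x,r)$ and, since the right-hand side involves only $g$ on $2B$, reduce to estimating the median or the average: choosing $c$ to be a median of $u$ on $B$ (or any constant within a controlled multiple of the right-hand side), it suffices to bound $\|u-c\|_{L^{p_\ast(s)}(B)}$ in terms of $r^s\|g\|_{L^p(2B)}$ up to the doubling constant. The engine is a pointwise estimate: from the defining inequality \eqref{e2.1} and a chaining/telescoping argument over a sequence of balls $B(x,2^{-j}r)$ shrinking to a Lebesgue point, one obtains for $\mu$-a.e.\ $x\in B$ a bound of the form
\begin{equation*}
|u(x)-c|\ls \sum_{j\ge 0} (2^{-j}r)^s \bint_{B(x,2^{-j}r)} g\,d\mu \ls r^s \, [M_{2r}(g^p)(x)]^{1/p}\, \cdot\, \text{(something summable)},
\end{equation*}
but to get the gain in integrability one instead keeps the fractional-integral shape and applies a Riesz-potential / fractional maximal function estimate adapted to the doubling measure.

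More precisely, I would invoke the standard machinery (as in Haj\l asz--Koskela, or \cite{h03}): the telescoping yields that $|u(x)-u_B|$ is controlled a.e.\ on $B$ by a discrete fractional integral $I_s g(x) \equiv \sum_{j} (2^{-j}r)^s \bint_{B(x,2^{-j}r)} g\,d\mu$, and then one uses the Hedberg-type splitting of this sum into a ``near'' part estimated by $[M g^p(x)]^{1/p}$ times a geometric factor $r^s$, and a ``far'' part estimated by $r^{s-n/p}\|g\|_{L^p(2B)}$ using H\"older and the doubling lower bound $\mu(B(x,\rho))\gs (\rho/r)^n \mu(B)$; optimizing the split point gives
\begin{equation*}
|u(x)-u_B| \ls r^s \, [M_{2r}(g^p)(x)]^{s/(n/p)\cdot\text{?}}\ \|g\|_{L^p(2B)}^{1-\cdots},
\end{equation*}
i.e.\ exactly the pointwise bound whose $L^{p_\ast(s)}$ norm over $B$ is controlled, via the boundedness of the maximal operator on $L^{(n/p)}$... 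Cleaner: the weak-type $(1,1)$ / strong-type estimates for the fractional maximal function $M_{s}$ on spaces of homogeneous type directly give $\|M_s(g\,d\mu)\|_{L^{p_\ast(s)}(B)}\ls r^? \|g\|_{L^p(2B)}$, and combining with the a.e.\ pointwise domination of $|u-u_B|$ by this fractional maximal function finishes part of it; a truncation argument (applying the estimate to $\max\{\min\{u,b\},a\}$, whose $s$-gradient is still $g$) upgrades an a priori weak-type bound to the stated strong-type Sobolev inequality, which is the usual route when $p_\ast(s)$ is the critical exponent and one cannot afford weak-type loss.

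The main obstacle I anticipate is making the fractional-integral estimate quantitatively correct with the doubling exponent $n$ rather than the true (possibly non-existent) dimension: the lower mass bound $\mu(B(x,\rho))\gtrsim (\rho/R)^n\mu(B(x,R))$ that one needs for the ``far'' tail estimate does \emph{not} follow from the upper doubling condition alone in general, so one must either derive it from the stated doubling inequality applied with $x$ ranging over $B$ and comparing $B(x,\rho)$ to the fixed large ball (which does work: for $y\in B$ and $\rho\le 2r$, $B(x,2r)\subset B(y,4r)\subset B(y,(4r/\rho)\rho)$, giving $\mu(B(y,\rho))\gtrsim (\rho/r)^n\mu(B(x,2r))$), or restrict all the analysis to this relative form. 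Once that relative lower bound is in hand, the rest is the Hedberg splitting plus Hardy--Littlewood maximal boundedness on $L^{(n-ps)/... }$—routine but needing care when $p<1$, where one replaces $Mg$ by $(Mg^{p_0})^{1/p_0}$ for small $p_0<p$ throughout. The truncation step to pass from weak to strong type is the other place to be careful, since it is exactly what is needed at the endpoint exponent $p_\ast(s)$; I would handle it by the standard Maz'ya-type truncation on level sets of $u$, using that $g\in\cd^s(u)$ implies $g\in\cd^s(u_{a,b})$ for truncations.
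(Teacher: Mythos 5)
Your outline is, in essence, a reconstruction of the proof of the result the paper actually cites for this lemma, namely \cite[Theorem 8.7]{h03}: telescoping over shrinking balls to dominate $|u-c|$ pointwise a.e.\ by a discrete fractional integral of $g$, then a Hedberg splitting into a near part controlled by a maximal function and a far part controlled by $\|g\|_{L^p(2B)}$ via the relative lower mass bound $\mu(B(y,\rho))\gtrsim(\rho/r)^n\mu(B(x,2r))$ for $y\in B(x,r)$, then boundedness of the maximal operator (with the $(\cm(g^{p_0}))^{1/p_0}$ substitution when $p\le1$), with a Maz'ya-type truncation to reach the critical exponent $p_\ast(s)$. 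That machinery is exactly what underlies the cited theorem, so the mathematical content agrees; the paper, however, does not carry it out. For $s\in(0,1]$ it simply observes that $d^s$ is again a metric on $\cx$ (subadditivity of $t\mapsto t^s$), under which the $s$-gradient condition becomes an ordinary Haj\l asz gradient condition and the doubling exponent becomes $n/s$, so that \cite[Theorem 8.7]{h03} applies verbatim with Sobolev exponent $(n/s)p/((n/s)-p)=p_\ast(s)$; for $s>1$ it notes the same proof runs through line by line given $p<n/s$. This change-of-metric shortcut is worth internalizing, as it eliminates the need to re-derive the telescoping and Hedberg estimates with $s$ carried along. A few places in your sketch are fuzzy and would need tightening in a full write-up: the Hedberg exponents you leave as ``?'' should come out as $|u(x)-u_B|\lesssim r^{s}\,(\cm_{2B}g(x))^{1-sp/n}\|g\|_{L^p(2B)}^{sp/n}$ (appropriately modified for $p\le1$), which already yields the strong $L^{p_\ast(s)}$ bound for $p>1$ without truncation; the truncation is genuinely needed only to handle the sub-$L^1$ range, where the averages $u_{B_j}$ in the telescoping are not a priori defined (one must instead chain via medians, exactly as in the paper's Lemmas~\ref{l2.3}--\ref{l2.4}, or truncate first). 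You also flag correctly that the needed lower mass bound is the relative one and that it does follow from the stated doubling inequality by taking $\lambda=R/\rho$ there. Overall: correct in spirit and equivalent to the paper's source, but more laborious than the paper's metric-change reduction, and with some endpoint details ($p\le1$, averages vs.\ medians) that would need the same care the paper invests in Section~\ref{s2}.
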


When $s\in(0,\,1]$, since $d^s$ is also a distance on $\cx$,
 Lemma \ref{l2.2} follows from \cite[Theorem 8.7]{h03}.
When $s\in(1,\,\fz)$, with $p<n/s$ in mind, checking the proof of
\cite[Theorem 8.7]{h03} line by line, we still have
 Lemma \ref{l2.2}. We omit the details.

We still need the   following pointwise inequality,
which is a variant of the pointwise inequality  established in \cite[(5.7)]{kyz2}.

\begin{lem}\label{l2.3}
Let $\sz\in(0,\,\fz)$. Then there exists a positive constant $C$
such that, for each function $u\in L^\sz_\loc(\cx)$,
one can find a set $E$ with $\mu(E)=0$ so that
 for each pair of points
$x,\,y\in \cx\setminus E$ with $d(x,\,y)\in[2^{-k-1}, 2^{-k})$,
\begin{eqnarray} \label{e2.2}
|u(x)-u(y)| &&\le C \sum_{j\ge k-2} \lf\{\inf_{c\in\rr}\lf[ \bint_{B(x,\,2^{-j})}|u(w)-c|^\sz \,dw\r]^{1/\sz }\r.\\
&&\quad\quad\quad\quad\quad\quad\lf.
+\inf_{c\in\rr}\lf[ \bint_{B(y,\,2^{-j})}|u(w)-c|^\sz \,dw\r]^{1/\sz }\r\}.\nonumber
\end{eqnarray}
\end{lem}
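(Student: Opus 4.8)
The plan is to run the standard ``telescoping over dyadic balls'' argument, but organized around \emph{near-minimizing constants} rather than averages, so that the range $\sz\in(0,1)$ is covered with no extra effort. For a ball $B$ write
$$a(B)\equiv\inf_{c\in\rr}\lf(\bint_B|u(w)-c|^\sz\,d\mu(w)\r)^{1/\sz},$$
and, for each $B$, fix a constant $c_B\in\rr$ with $\bint_B|u-c_B|^\sz\,d\mu\le 2[a(B)]^\sz$ (possible since this infimum is nonnegative and, by $u\in L^\sz_\loc(\cx)$, finite). For $x\in\cx$ and $j\in\zz$ abbreviate $B^x_j\equiv B(x,2^{-j})$, $c^x_j\equiv c_{B^x_j}$ and $a^x_j\equiv a(B^x_j)$; the right-hand side of \eqref{e2.2} is then a constant multiple of $\sum_{j\ge k-2}(a^x_j+a^y_j)$. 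First I would fix the null set: since $\mu$ is doubling and $u\in L^\sz_\loc(\cx)$, the Lebesgue differentiation theorem gives $E\subset\cx$ with $\mu(E)=0$ such that $\lim_{r\to0}\bint_{B(x,r)}|u(w)-u(x)|^\sz\,d\mu(w)=0$ for all $x\in\cx\setminus E$; together with the (quasi-)triangle inequality for $\|\cdot\|_{L^\sz}$ this forces $c^x_j\to u(x)$ as $j\to\fz$ for each such $x$.

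The core estimate is that for any two balls $B'\subset B$ with $\mu(B)\le C_0\mu(B')$ one has $|c_{B'}-c_B|\le C(a(B')+a(B))$. When $\sz\ge1$ this comes from $|c_{B'}-c_B|=\bint_{B'}|c_{B'}-c_B|\,d\mu\le\bint_{B'}|u-c_{B'}|\,d\mu+\bint_{B'}|u-c_B|\,d\mu$, Jensen's inequality, the containment $B'\subset B$ combined with doubling, and the choice of $c_{B'},c_B$. When $\sz<1$ one argues instead with the subadditivity of $t\mapsto t^\sz$: $|c_{B'}-c_B|^\sz=\bint_{B'}|c_{B'}-c_B|^\sz\,d\mu\le\bint_{B'}|u-c_{B'}|^\sz\,d\mu+\bint_{B'}|u-c_B|^\sz\,d\mu\le 2[a(B')]^\sz+C[a(B)]^\sz$, and then one takes $(\cdot)^{1/\sz}$ using $(\az+\bz)^{1/\sz}\ls\az^{1/\sz}+\bz^{1/\sz}$. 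Applying this to the nested pairs $B^x_{j+1}\subset B^x_j$, which are comparable in measure by the doubling property with $\lz=2$, yields $|c^x_{j+1}-c^x_j|\le C(a^x_j+a^x_{j+1})$, and likewise for $y$.

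Finally I would assemble the pieces. We may assume the right-hand side of \eqref{e2.2} is finite, so all the series below converge absolutely and the telescoping is legitimate. From $c^x_j\to u(x)$,
$$|u(x)-c^x_{k-2}|\le\sum_{j\ge k-2}|c^x_j-c^x_{j+1}|\le C\sum_{j\ge k-2}a^x_j,$$
and similarly $|u(y)-c^y_{k-1}|\le C\sum_{j\ge k-1}a^y_j$. To bridge the two chains, observe that if $d(x,y)\in[2^{-k-1},2^{-k})$ then $B^y_{k-1}=B(y,2^{-k+1})\subset B(x,3\cdot2^{-k})\subset B(x,2^{-k+2})=B^x_{k-2}$, and these two balls are comparable in measure by doubling (indeed $x\in B^y_{k-1}$, so $B^x_{k-2}\subset B(y,2^{-k+3})$); hence the displayed nested-ball estimate gives $|c^x_{k-2}-c^y_{k-1}|\le C(a^x_{k-2}+a^y_{k-1})$. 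Combining the three bounds,
$$|u(x)-u(y)|\le|u(x)-c^x_{k-2}|+|c^x_{k-2}-c^y_{k-1}|+|c^y_{k-1}-u(y)|\le C\sum_{j\ge k-2}(a^x_j+a^y_j),$$
which is precisely \eqref{e2.2}.

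I expect the only genuinely delicate point to be the uniform nested-ball estimate when $\sz<1$: there arithmetic means cannot be compared to $L^\sz$-oscillations directly, so one must route everything through the near-minimizers $c_B$ and the subadditivity inequality $(\az+\bz)^\sz\le\az^\sz+\bz^\sz$. The remaining work — tracking the doubling constants and the shift of dyadic indices (so that the ``starting'' balls $B^x_{k-2}$ and $B^y_{k-1}$ still belong to the summation range $j\ge k-2$) — is routine bookkeeping.
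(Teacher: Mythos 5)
Your proof is correct, and it takes a genuinely different route from the paper. The paper anchors the telescoping sums at the \emph{median values} $m_u(B(z,2^{-j}))$ and relies on two ingredients: a separate lemma (Lemma~\ref{l2.4}, from Fujii) giving a.e.\ convergence of medians $m_u(B)\to u(z)$ as $B$ shrinks to $z$, and the Chebyshev-type claim \eqref{e2.4} bounding $|m_u(B)-c|$ by $\{2\bint_B|u-c|^\sz\,d\mu\}^{1/\sz}$, which handles every $\sz\in(0,\fz)$ in one stroke and needs no case split at $\sz=1$. You instead anchor at near-minimizers $c_B$, deduce $c_B\to u(x)$ from the $L^\sz$-Lebesgue differentiation theorem plus the (quasi-)triangle inequality, and establish the nested-ball estimate $|c_{B'}-c_B|\ls a(B')+a(B)$ directly by averaging over $B'$, with Jensen when $\sz\ge1$ and the subadditivity of $t\mapsto t^\sz$ when $\sz<1$. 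Both are valid: yours is arguably more self-contained (no appeal to a median convergence lemma, only standard LDT, whose $\sz<1$ version does hold on doubling spaces via the usual rational-constants trick with subadditivity), while the paper's median-based version is slightly slicker at the $\sz<1$ split and, because medians are defined for arbitrary measurable functions, would even extend a bit beyond the hypothesis $u\in L^\sz_\loc$. The chaining geometry (bridging $B(y,2^{-k+1})\subset B(x,2^{-k+2})$ with comparable measures) and the bookkeeping are the same in spirit in both proofs.
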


To prove Lemma \ref{l2.3}, we need  Lemma \ref{l2.4} below.
In what follows, for a real-valued measurable function $u$ and
a ball $B$,  define the {\it median value} of $u$ on $B$ by
\begin{equation}\label{e2.3}
 m_u(B)\equiv\max\lf\{a\in\rr,\,\mu(\{x\in B:\ u(x)<a\})\le \frac{\mu(B)}2\r\} .
\end{equation}

\begin{lem}\label{l2.4}
For every real-valued measurable function $u$,
there exists a measurable set $E\subset \cx$ with $\mu(E)=0$ such that for  all $z\in\cx\setminus E$,
$$u(z)=\lim_{\mu(B)\to0,\,B\ni z}m_u(B).$$
\end{lem}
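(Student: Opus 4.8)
The plan is to prove Lemma \ref{l2.4}, which asserts that a measurable function $u$ is almost everywhere equal to the limit of its median values over shrinking balls. This is a Lebesgue-differentiation-type statement, but phrased with medians rather than averages, and the natural route is to reduce it to the Lebesgue differentiation theorem for the doubling measure $\mu$.

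First I would recall that on a doubling metric measure space the Lebesgue differentiation theorem holds: for any $f\in L^1_{\loc}(\cx)$, almost every $z$ is a Lebesgue point, i.e. $\bint_{B}|f(y)-f(z)|\,d\mu(y)\to 0$ as $\mu(B)\to 0$ with $B\ni z$. Since $u$ is only assumed measurable, I would first truncate: for $N\in\nn$ set $u_N\equiv\max\{-N,\min\{N,u\}\}$, which is bounded hence locally integrable, and note that $m_{u}(B)$ and $m_{u_N}(B)$ agree once $N>|m_u(B)|$, and more importantly that on the set $\{|u|<N\}$ the median of $u_N$ governs that of $u$. So it suffices to prove the claim for each bounded truncation and then take a countable union of null sets over $N\in\nn$; on $\{|u|\le N-1\}$ the statement for $u_N$ yields the statement for $u$. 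Thus I may assume $u\in L^\fz(\cx)\cap L^1_\loc(\cx)$.

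The key step is the pointwise comparison of the median to the mean at a Lebesgue point. Fix a Lebesgue point $z$ of $u$ and let $\ez>0$. By definition of Lebesgue point, there is $\dz>0$ so that for every ball $B\ni z$ with $\diam B<\dz$ we have $\bint_B|u(y)-u(z)|\,d\mu(y)<\ez^2/2$. By Chebyshev's inequality, $\mu(\{y\in B:\ |u(y)-u(z)|\ge\ez\})\le \frac{1}{\ez}\int_B|u(y)-u(z)|\,d\mu(y)<\frac{\ez\mu(B)}{2}<\frac{\mu(B)}{2}$ for $\ez$ small. Hence the set where $u\ge u(z)+\ez$ has measure strictly less than $\mu(B)/2$, which by the definition \eqref{e2.3} forces $m_u(B)<u(z)+\ez$; symmetrically, the set where $u< u(z)-\ez$ has measure less than $\mu(B)/2$, which forces $m_u(B)\ge u(z)-\ez$. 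Therefore $|m_u(B)-u(z)|\le\ez$ for all such $B$, proving $m_u(B)\to u(z)$ as $\mu(B)\to 0$, $B\ni z$. Taking $E$ to be the union over $N\in\nn$ of the non-Lebesgue-point sets of the truncations $u_N$ (each of measure zero, so $\mu(E)=0$) completes the argument.

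The main obstacle, and the only genuinely delicate point, is the reduction from merely measurable $u$ to a locally integrable (or bounded) function so that the Lebesgue differentiation theorem applies, together with checking carefully that the median of the truncation $u_N$ coincides with, or controls, the median of $u$ at points where $|u(z)|$ is strictly less than $N$ — one must be mildly careful because the median is a one-sided $\max$ in \eqref{e2.3}, so the truncation from above and below must be handled so that neither the condition $\mu(\{u_N<a\})\le\mu(B)/2$ nor its negation is accidentally altered on the relevant range of $a$ near $u(z)$. Once that bookkeeping is in place, the Chebyshev estimate above is routine and the conclusion follows. I would also remark that the doubling property enters only through the validity of the Lebesgue differentiation theorem, via the Hardy–Littlewood maximal inequality for $\mu$.
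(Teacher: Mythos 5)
Your proof is correct. The paper itself gives no argument, referring to Fujii \cite{f90} for the Euclidean case; the reduction to the Lebesgue differentiation theorem that you outline is exactly the right route, and that theorem holds on any doubling metric measure space, with non-centered balls handled via the doubling property. A small simplification worth noting: the truncation bookkeeping can be avoided by applying differentiation directly to the indicators $\chi_{\{u<a\}}$ over rational $a$, which are locally integrable with no hypothesis on $u$. Outside the countable union of the corresponding exceptional null sets one argues: for rational $a<u(z)$ the density of $\{u<a\}$ at $z$ is zero, so $\mu(\{y\in B:\,u(y)<a\})\le\mu(B)/2$ for all sufficiently small $B\ni z$ and hence $m_u(B)\ge a$; for rational $a>u(z)$ the density is one, so $\mu(\{y\in B:\,u(y)<a\})>\mu(B)/2$ and $m_u(B)<a$. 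Letting rational $a$ approach $u(z)$ from either side gives the claim with no case analysis near a cutoff level. Your Chebyshev comparison of median to mean at a Lebesgue point is the quantitative version of the same fact, and the interaction of the truncation with the one-sided $\max$ in \eqref{e2.3} does work out as you indicate, since for $a\in(-N,N)$ the sets $\{u<a\}$ and $\{u_N<a\}$ coincide, so the medians agree as soon as one of them lies in $(-N,N)$.
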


Lemma \ref{l2.4} was  proved in \cite[Lemma 2.2]{f90} for $\cx=\rn$, and
the very same argument  gives Lemma \ref{l2.4}.
We omit the details.

\begin{proof}[Proof of Lemma \ref{l2.3}]
Let $u$ be a real-valued measurable function and $E$ be the set given by Lemma \ref{l2.4}.
Then for all $z\in\cx\setminus E$, by Lemma \ref{l2.4},
$m_u(B(z,\,2^{-j}))\to u(z)$ as $j\to\fz$, and hence
\begin{eqnarray*}
&&\lf|u(z)-m_u(B(z,\,2^{-k}))\r|\\
&&\quad\le \sum_{j\ge k}\lf|m_u(B(z,\,2^{-j}))-m_u(B(z,\,2^{-j-1}))\r|\\
 &&\quad\le \sum_{j\ge k}\lf[\lf|m_u(B(z,\,2^{-j}))-c_{B(z,\,2^{-j})}\r|
+\lf|m_u(B(z,\,2^{-j-1}))-c_{B(z,\,2^{-j} )}\r|\r],
\end{eqnarray*}
where $c_{B(z,\,2^{-j} )}$ is a real number such that
\begin{equation*}
 \bint_{B(z,\,   2^{-j}  )}\lf|u(w)-c_{B(z,\,2^{-j} )}\r|^\sz \,d\mu(w)\le
2\inf _{c\in\rr} \bint_{B(z,\,  2^{-j} )}|u(w)-c|^\sz \,d\mu(w).
\end{equation*}

We claim  that  for every ball $B$ and each $c\in\rr$,
\begin{equation}\label{e2.4}
|m_u(B)-c|\le  \lf\{2\bint_B|u(w)-c|^\sz \,d\mu(w)\r\}^{1/\sz }.
\end{equation}
Assume that this claim holds for a moment.
We have
\begin{eqnarray}\label{e2.5}
\lf |m_u(B(z,\,2^{-j}))-c_{B(z,\,2^{-j})}\r|
&&\le \lf\{ 2\bint_{B(z,\,2^{-j})}|u(w)-c_{B(z,\,2^{-j})}|^\sz \,d\mu(w)\r\}^{1/\sz }\\
&&\ls\inf_{c\in\rr}\lf\{ \bint_{B(z,\,2^{-j})}|u(w)-c|^\sz \,d\mu(w)\r\}^{1/\sz } \nonumber
\end{eqnarray}
and
\begin{eqnarray}\label{e2.6}
\lf |m_u(B(z,\,2^{-j-1}))-c_{B(z,\, 2^{-j})}\r|&&\le
\lf\{2\bint_{B(z,\, 2^{-j-1})}|u(w)-c_{B(z,\,2^{-j})}|^\sz \,dw\r\}^{1/\sz }\\
&&\ls
\lf\{\bint_{B(z,\, 2^{-j})}|u(w)-c_{B(z,\,2^{-j})}|^\sz \,dw\r\}^{1/\sz }\nonumber\\
&&\ls\inf_{c\in\rr}\lf\{ \bint_{B(z,\, 2^{-j})}|u(w)-c|^\sz \,dw\r\}^{1/\sz }.\nonumber
\end{eqnarray}
Therefore,
\begin{equation}\label{e2.7}
 \lf|u(z)-m_u(B(z,\,2^{-k}))\r|
 \ls \sum_{j\ge k}\inf_{c\in\rr}\lf\{ \bint_{B(z,\,r_j)}|u(w)-c|^\sz \,dw\r\}^{1/\sz }.
\end{equation}

For $x,\,y\in\cx\setminus E$ with $2^{-k-1}\le d(x,\,y)< 2^{-k}$, we write

\begin{eqnarray*}
|u(x)-u(y)|&&\le  |u(x)-m_u(B(x,\,2^{-k+1}))|+|m_u(B(x,\,2^{-k+1}))-c_{B(x,\,2^{-k+1})}|
\\
&&\quad +|c_{B(x,\,2^{-k+1})}-m_u(B(y,\,2^{-k}))|
+|u(y)-m_u(B(y,\,2^{-k}))|.
\end{eqnarray*}
By an argument similar to that of \eqref{e2.6},
we have
$$
 |c_{B(x,\,2^{-k+1})}-m_u(B(y,\,2^{-k}))|
\ls\inf_{c\in\rr}\lf\{ \bint_{B(x,\, 2^{-k+1})}|u(w)-c|^\sz \,dw\r\}^{1/\sz },$$
which together with \eqref{e2.7} and \eqref{e2.5} gives \eqref{e2.2}.

Now we prove the claim \eqref{e2.4}.
For every ball $B$ and each $c\in\rr$,  observing that
$m_{u-c}(B)=m_u(B)-c$ and  recalling that
$|m_{u}(B)|\le m_{|u|}(B)$ as proved in
\cite[Lemma 2.1]{f90}, we have $  |m_u(B)-c|\le m_{|u-c|}(B)$.
By this, \eqref{e2.4} is reduced to
\begin{equation}\label{e2.9}
m_{|u-c|}(B)\le \lf\{2\bint_B|u(w)-c|^\sz \,d\mu(w)\r\}^{1/\sz }.
\end{equation}
To see this, letting $\dz\equiv \bint_B|u(w)-c|^\sz\,dw$,
by Chebyshev's inequality, for every $a>2$, we have
\begin{eqnarray*}
\mu\lf(\lf\{w\in B:\ |u(w)-c|\ge(a\dz)^{1/\sz }\r\}\r)
&&=\mu\lf(\lf\{w\in B:\ |u(w)-c|^\sz\ge a\dz \r\}\r)\\
&&\le ( a\dz)^{-1 }\int_B|u(w)-c|^\sz\,dw<\frac{\mu(B)}2,
\end{eqnarray*}
 which yields that 
$$\mu\lf(\lf\{w\in B:\ |u(w)-c|<(a\dz)^{1/\sz}\r\}\r)>\frac{\mu(B)}2$$
and hence by \eqref{e2.3}, 
$m_{|u-c|}(B)\le (a\dz)^{1/\sz}.$
Then letting $ a \to 2$, we obtain \eqref{e2.9} and hence prove the claim \eqref{e2.4}.
This finishes the proof of Lemma \ref{l2.3}.
\end{proof}

We also use the following lemma.

\begin{lem}\label{l2.5}
Let $s,\,\sz\in(0,\,\fz)$, $\ez\in[0,\,s]$ and
$p,\,q\in(0,\fz]$.
Let $\vec E=\vec C^{s,\,\sz},
\vec A^{s,\,\sz}$, $\vec I^{s,\,\sz}$ or $\vec S^{s,\,\ez,\,\sz}$.
Then for each measurable function $u$,
\begin{equation}\label{e2.10}
\|u\|_{\vec E\dot B_{p,\,q}(\cx)}\sim \lf\|\lf\{\vec E(u)(x,\,2^{-k})\r\}_{k\in\zz}\r\|_{\ell^q(L^p(\cx))}.
\end{equation}
\end{lem}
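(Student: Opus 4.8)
The plan is to establish \eqref{e2.10} by a standard discretization of the integral over $t\in(0,\fz)$, using the fact that each of the four quantities $C^{s,\sz}_t(u)(x)$, $A^{s,\sz}_t(u)(x)$, $I^{s,\sz}_t(u)(x)$, $S^{s,\ez,\sz}_t(u)(x)$ behaves ``almost monotonically'' in $t$ up to a fixed multiplicative constant depending only on $C_1$, $n$, $s$, $\sz$. Concretely, I would first write
\[
\|u\|_{\vec E\dot B_{p,\,q}(\cx)}^q=\int_0^\fz\|\vec E(u)(\cdot,\,t)\|_{L^p(\cx)}^q\frac{dt}t=\sum_{k\in\zz}\int_{2^{-k-1}}^{2^{-k}}\|\vec E(u)(\cdot,\,t)\|_{L^p(\cx)}^q\frac{dt}t,
\]
so that the claim reduces to showing $\|\vec E(u)(\cdot,\,t)\|_{L^p(\cx)}\sim\|\vec E(u)(\cdot,\,2^{-k})\|_{L^p(\cx)}$ uniformly for $t\in[2^{-k-1},2^{-k}]$, since $\int_{2^{-k-1}}^{2^{-k}}\frac{dt}t=\log2$. (For $q=\fz$ the same reduction works with $\sup_k$ in place of $\sum_k$.)

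The heart of the matter is then the pointwise comparison, for fixed $x$ and $t\in[2^{-k-1},2^{-k}]$, of $\vec E(u)(x,t)$ with $\vec E(u)(x,2^{-k})$. For $\vec C^{s,\sz}$ and $\vec I^{s,\sz}$ one uses that $B(x,t)$ and $B(x,2^{-k})$ are comparable balls: by the doubling property $\mu(B(x,2^{-k}))\le C_1 2^n\mu(B(x,t))$ and conversely, and the averaged integrals over the two balls differ by at most a doubling constant (for $I^{s,\sz}$, restricting the infimum to a larger or smaller concentric ball only changes things by a controlled factor since $\inf_{c}\bint_{B}|u-c|^\sz$ is, up to doubling, monotone in $B$ among concentric balls); the prefactors $t^{-s}$ and $2^{ks}$ are comparable since $2^{-k-1}\le t\le 2^{-k}$. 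For $\vec A^{s,\sz}$ one additionally uses $|u_{B(x,t)}-u_{B(x,2^{-k})}|\ls(\bint_{B(x,2^{-k})}|u(y)-u_{B(x,2^{-k})}|^\sz\,d\mu(y))^{1/\sz}$ plus doubling to absorb the change of the averaging ball, together with the elementary fact that $\bint_B|u-u_B|^\sz$ and $\inf_c\bint_B|u-c|^\sz$ are comparable. For $\vec S^{s,\ez,\sz}$, observe that $S^{s,\ez,\sz}_t(u)(x)=t^{\ez-s}\sup_{0<r\le t}r^{-\ez}(\inf_c\bint_{B(x,r)}|u-c|^\sz)^{1/\sz}$: enlarging the range of the supremum from $r\le 2^{-k-1}$ to $r\le 2^{-k}$ can only increase it, while the extra values $r\in(2^{-k-1},2^{-k}]$ are each comparable (via doubling, as above) to the value at $r=2^{-k-1}$; combined with $t^{\ez-s}\sim 2^{-k(\ez-s)}$ this gives the two-sided bound. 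In all four cases the comparison constants depend only on $C_1$, $n$, $s$, $\sz$ and (for $\vec S$) $\ez$, not on $x$, $t$, $k$ or $u$.

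Once the uniform pointwise equivalence $\vec E(u)(x,t)\sim\vec E(u)(x,2^{-k})$ is in hand for all $x$ and all $t\in[2^{-k-1},2^{-k}]$, taking $L^p(\cx)$ norms in $x$ preserves it (the implied constants are absolute), and plugging into the displayed sum yields $\|u\|_{\vec E\dot B_{p,\,q}(\cx)}^q\sim\sum_{k\in\zz}\|\vec E(u)(\cdot,\,2^{-k})\|_{L^p(\cx)}^q$, which is exactly the right-hand side of \eqref{e2.10} raised to the power $q$; the case $p=\fz$ and/or $q=\fz$ follows by the usual modifications. I expect the only genuinely delicate point to be the $\vec S^{s,\ez,\sz}$ case when $\ez=0$ (where the prefactor is $t^{-s}$ but there is still a supremum over all smaller radii) and verifying that the comparison of $\inf_c\bint_{B(x,r)}|u-c|^\sz$ over concentric balls of comparable radius really is two-sided with a doubling-controlled constant — this is where one must be careful, but it is routine: if $r\le r'\le 2r$ then $\inf_c\bint_{B(x,r)}|u-c|^\sz\le\frac{\mu(B(x,r'))}{\mu(B(x,r))}\inf_c\bint_{B(x,r')}|u-c|^\sz\le C_1 2^n\inf_c\bint_{B(x,r')}|u-c|^\sz$, and the reverse inequality is obtained by the same device after inserting the best constant for the larger ball. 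Everything else is bookkeeping with the doubling constant and the elementary change-of-variables $\int_{2^{-k-1}}^{2^{-k}}dt/t=\log2$.
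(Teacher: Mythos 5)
Your discretization into dyadic $t$-intervals is the right starting point, but the two-sided pointwise comparison $\vec E(u)(x,t)\sim\vec E(u)(x,2^{-k})$ for $t\in[2^{-k-1},2^{-k}]$ is false, and the ``reverse'' direction you propose does not go through. You claim that for concentric balls with $r\le r'\le 2r$ one has $\inf_c\bint_{B(x,r')}|u-c|^\sz\,d\mu\ls\inf_c\bint_{B(x,r)}|u-c|^\sz\,d\mu$ by inserting the best constant for the smaller ball; but if $c_0$ is nearly optimal for $B(x,r)$, then $\int_{B(x,r')}|u-c_0|^\sz\,d\mu$ is \emph{larger}, not smaller, than $\int_{B(x,r)}|u-c_0|^\sz\,d\mu$, and the contribution from the annulus $B(x,r')\setminus B(x,r)$ is completely uncontrolled. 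Indeed, if $u\equiv 0$ on $B(x,r)$ and $u\equiv 1$ on $B(x,r')\setminus B(x,r)$, the right-hand side vanishes while the left does not. The same obstruction defeats the two-sided claims for $\vec C^{s,\sz}$, $\vec A^{s,\sz}$ and $\vec S^{s,\ez,\sz}$: passing from a smaller ball, or a smaller range of radii in the supremum, to a larger one can be controlled only in the increasing direction.

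Fortunately the two-sided comparison is not needed. The one-sided, almost-increasing inequality $\vec E(u)(x,t)\le C\,\vec E(u)(x,t')$ for $t\le t'\le 2t$ is correct (for $\vec C,\vec I,\vec A$: inclusion $B(x,t)\subset B(x,t')$ plus doubling; for $\vec S$: the supremum over $r\le t$ is at most the one over $r\le t'$, and $t^{\ez-s}\le 2^{s-\ez}(t')^{\ez-s}$), and it yields both inequalities in \eqref{e2.10} once you apply it over two \emph{adjacent} dyadic blocks rather than the same one. On $t\in(2^{-k-1},2^{-k}]$ it gives $\vec E(u)(x,t)\ls\vec E(u)(x,2^{-k})$ and hence $\|u\|_{\vec E\dot B_{p,\,q}(\cx)}^q\ls\sum_k\|\vec E(u)(\cdot,2^{-k})\|_{L^p(\cx)}^q$; on $t\in[2^{-k},2^{-k+1})$ it gives $\vec E(u)(x,2^{-k})\ls\vec E(u)(x,t)$ and hence the reverse inequality, with the usual modifications for $p=\fz$ or $q=\fz$. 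This is exactly the paper's argument: it records only the one-sided bound $\vec E(u)(x,t)\ls\vec E(u)(x,2^{-k+1})$ for $t\in(2^{-k},2^{-k+1}]$ and leaves the dyadic bookkeeping to the reader.
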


\begin{proof}
Observe that $\vec E(u)(x,\,t)\ls\vec E(u)(x,\,2^{-k+1}) $
for all $t\in (2^{-k},\,2^{-k+1}]$ and $x\in\cx$,
from which   \eqref{e2.10} follows by a simple computation.
This finishes the proof of Lemma \ref{l2.5}.
\end{proof}

With the aid of Lemma \ref{l2.1}, Lemma \ref{l2.3} and Lemma \ref{l2.5},
we obtain the following result,  which, together with the fact
$\vec C^{s,\,p}\dot B_{p,\,q}(\cx)=\dot B^s_{p,\,q}(\cx)$,
implies Theorem \ref{t1.2} and (i) through (iv) of Theorem \ref{t1.1}.

\begin{thm}\label{t2.1}
Let $s\in(0,\fz)$ and  $p,\,q\in(0,\fz]$.

(i) If $\sz\in(0,\,p]$, then  $\dot N^s_{p,\,q}(\cx)=\vec C^{s,\,\sz}\dot B_{p,\,q}(\cx)$.

(ii) If $\sz\in(0,\,p_\ast(s))$, then $\dot N^s_{p,\,q}(\cx)
 =\vec I^{s,\,\sz}\dot B_{p,\,q}(\cx)$.

(iii) If $\ez\in[0,\,s)$ and $\sz\in(0,\,p_\ast(s))$,
then $\dot N^s_{p,\,q}(\cx) = \vec S^{s,\,\ez,\,\sz}\dot B_{p,\,q}(\cx).$

(iv) If $p\in(n/(n+s),\,\fz]$ and $\sz\in(0,\,p_\ast(s))$,
 then
$\dot N^s_{p,\,q}(\cx)=\vec A^{s,\,\sz}\dot B_{p,\,q}(\cx).$
\end{thm}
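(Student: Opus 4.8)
The plan is to establish Theorem~\ref{t2.1} by proving a chain of continuous embeddings among the five spaces $\dot N^s_{p,\,q}(\cx)$, $\vec C^{s,\,\sz}\dot B_{p,\,q}(\cx)$, $\vec A^{s,\,\sz}\dot B_{p,\,q}(\cx)$, $\vec I^{s,\,\sz}\dot B_{p,\,q}(\cx)$ and $\vec S^{s,\,\ez,\,\sz}\dot B_{p,\,q}(\cx)$, using Lemma~\ref{l2.5} throughout to discretize the $t$-integral into a sum over $k\in\zz$. The easy direction is the cascade $\vec S\hookrightarrow\vec I$, $\vec I\hookrightarrow\vec A$ (when $p>n/(n+s)$), and $\vec C\hookrightarrow\vec I$: these come from pointwise comparison of the relevant quantities. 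Indeed $I^{s,\,\sz}_t(u)(x)\le S^{s,\,\ez,\,\sz}_t(u)(x)$ trivially (take $r=t$ in the supremum), $I^{s,\,\sz}_t(u)(x)\le C^{s,\,\sz}_t(u)(x)$ by choosing $c=u(x)$ in the infimum, and $I^{s,\,\sz}_t\le A^{s,\,\sz}_t\le 2 I^{s,\,\sz}_t$ when $\sz\ge1$ — the case $\sz<1$ (equivalently small $p$) is exactly where the restriction $p>n/(n+s)$ enters and requires a slightly more careful argument with the $\ell^\sz$-quasi-triangle inequality together with Lemma~\ref{l2.1}.

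The two substantive inclusions are $\dot N^s_{p,\,q}(\cx)\hookrightarrow \vec C^{s,\,\sz}\dot B_{p,\,q}(\cx)$ for $\sz\le p$, and $\vec I^{s,\,\sz}\dot B_{p,\,q}(\cx)\hookrightarrow\dot N^s_{p,\,q}(\cx)$ for $\sz<p_\ast(s)$; together with the easy direction these close the loop. For the first, given $\vec g=\{g_j\}\in\bd^s(u)$, the defining inequality $|u(x)-u(y)|\le d(x,y)^s[g_k(x)+g_k(y)]$ for $2^{-k-1}\le d(x,y)<2^{-k}$ gives, after averaging the $\sz$-th power over $y\in B(x,2^{-k})$ and using $\sz\le p$, a pointwise bound
\begin{equation*}
C^{s,\,\sz}_{2^{-k}}(u)(x)\ls g_k(x)+\lf(\bint_{B(x,\,2^{-k})}[g_k(y)]^\sz\,d\mu(y)\r)^{1/\sz}\ls g_k(x)+\lf[\cm\lf([g_k]^\sz\r)(x)\r]^{1/\sz},
\end{equation*}
and then taking $L^p$-norms, invoking the Fefferman--Stein vector-valued maximal inequality in $\ell^q(L^p)$ when $\sz<\min\{p,q\}$ (with the standard boundary adjustments and, for $\sz=p$ or $\sz=q$, replacing $\sz$ by a slightly smaller exponent since $\bd^s$ is closed under such a change — this is the routine maximal-function bookkeeping), followed by infimizing over $\vec g$. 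For the second inclusion, one feeds Lemma~\ref{l2.3} with the given $\sz<p_\ast(s)$: the pointwise inequality \eqref{e2.2} exhibits, for $x,y$ with $d(x,y)\in[2^{-k-1},2^{-k})$, a fractional $s$-Haj\l asz gradient of $u$ built from the tails $\sum_{j\ge k-2}2^{js}I^{s,\,\sz}_{2^{-j}}(u)(\cdot)$; one then needs to sum these tails in $\ell^q(L^p)$ and control them by $\|\{I^{s,\,\sz}_{2^{-j}}(u)\}_j\|_{\ell^q(L^p)}$. The summation $\sum_{j\ge k-2}2^{js}(\cdot)_j$ against the factor $2^{-ks}$ coming from $d(x,y)^s$ is a discrete Hardy-type inequality, valid in $\ell^q$ for all $q\in(0,\fz]$ because the geometric weight $2^{-(j-k)s}$ with $s>0$ is summable; here one uses Lemma~\ref{l2.1} to replace balls $B(x,2^{-j})$ by the slightly larger $B(x,2^{-j+1})$ and to pass between exponents $p_\ast(\ez)$ and $p$, which is why $\sz<p_\ast(s)$ rather than $\sz\le p$ is the natural hypothesis.

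The main obstacle I anticipate is the bookkeeping in the Hardy/tail estimate combined with the exponent juggling near the endpoints: for $q<1$ the $\ell^q$ quasi-norm forces one to sum $q$-th powers, and one must check that $\sum_k 2^{-ks q}\big(\sum_{j\ge k-2}2^{js}a_j\big)^q\ls\sum_j a_j^q$ uniformly — true but needing the $s>0$ decay and a careful split; similarly the maximal inequality in $\ell^q(L^p)$ degenerates when $p\le1$ or $q\le1$, handled by the trick of shrinking $\sz$ (permissible since a fractional $s$-Haj\l asz gradient for one exponent serves all smaller ones, and since $\sz$ ranges in an open interval). A secondary subtlety is the $\vec A$ versus $\vec I$ equivalence for small $p$, where one genuinely needs $p>n/(n+s)$: here $A^{s,\,\sz}_t(u)(x)$ involves the average $u_{B(x,t)}$ rather than an arbitrary constant, and controlling $|u_{B(x,t)}-c|$ requires a telescoping estimate in the spirit of Lemma~\ref{l2.3} together with Lemma~\ref{l2.1}, which is where the dimensional threshold appears. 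Once these pieces are assembled, Lemma~\ref{l2.5} converts everything back to the continuous $t$-norm and the stated identities of spaces follow.
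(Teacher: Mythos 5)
Your plan misses the one embedding that carries the real weight of part (iii): $\dot N^s_{p,\,q}(\cx)\hookrightarrow\vec S^{s,\,\ez,\,\sz}\dot B_{p,\,q}(\cx)$. Your listed inclusions are $\vec S\hookrightarrow\vec I$, $\vec C\hookrightarrow\vec I$, $\vec I\hookrightarrow\vec A$ and $\vec A\hookrightarrow\vec I$ (the easy pointwise ones), together with the two substantive ones $\dot N\hookrightarrow\vec C$ and $\vec I\hookrightarrow\dot N$. These close a loop through $\dot N$, $\vec C$, $\vec I$, $\vec A$, but $\vec S$ sits strictly upstream: you only have $\vec S\hookrightarrow\vec I\hookrightarrow\dot N$, and nothing comes back into $\vec S$. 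There is no pointwise comparison $\vec C\ls\vec S$ or $\vec I\ls\vec S$ because $S^{s,\,\ez,\,\sz}_t$ takes a supremum over all scales $r\le t$ while $C,I,A$ see only scale $t$. The paper attacks this directly: given $\vec g\in\bd^s(u)$, it replaces $\vec g$ by a monotonized sequence $h_k\equiv 2^{k\ez'''}\sum_{i\ge k}2^{-i\ez'''}g_i$ (so that $h_i\le 2^{(i-k)\ez'''}h_k$ for $i\ge k$), and then applies the Poincar\'e-type inequality of Lemma~\ref{l2.1} at every scale $2^{-j}$ with $j\ge k$ simultaneously, which is exactly what lets one dominate the sup over $r\le 2^{-k}$ in $S^{s,\,\ez,\,\sz}_{2^{-k}}$ by a single $\cm_\dz(h_k)(x)$; see \eqref{e2.17}--\eqref{e2.18}. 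This is where Lemma~\ref{l2.1} is actually used, and it is where the hypothesis $\sz<p_\ast(s)$ genuinely enters (you need $\dz\in(0,p)$ and $\ez'\in(0,s)$ with $\sz\le\dz_\ast(\ez')$). You instead invoke Lemma~\ref{l2.1} inside the opposite direction $\vec I\hookrightarrow\dot N$, but that direction uses only Lemma~\ref{l2.3} (the median-value telescoping) plus a discrete Hardy bound, and it does not need $\sz<p_\ast(s)$ at all.

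Two secondary issues. First, your displayed pointwise bound for $C^{s,\,\sz}_{2^{-k}}(u)(x)$ using only $g_k$ is wrong: for $y\in B(x,2^{-k})$ with $d(x,y)\approx 2^{-j}$, $j\ge k$, the defining inequality for $\vec g$ produces $g_j$, not $g_k$, so the correct estimate splits $B(x,2^{-k})$ into dyadic annuli and yields a tail sum $\sum_{j\ge k}2^{-(j-k)s}\bigl([g_j(x)]^\sz+\bint_{B(x,2^{-j})}[g_j]^\sz\bigr)$; only then do you invoke the scalar Hardy--Littlewood maximal inequality. Second, the Besov norm is $\ell^q(L^p)$, with $L^p$ inside, so the scalar $L^{p/\sz}$-boundedness of $\cm$ (for $\sz<p$) plus a crude Fubini argument (for $\sz=p$) suffices; the Fefferman--Stein vector-valued inequality and the threshold $\sz<\min\{p,q\}$ belong to the Triebel--Lizorkin case of Section~\ref{s3}, not here, and the shrink-$\sz$ trick would not recover the endpoint anyway because $\vec C^{s,\,\sz}\dot B$ is the \emph{smaller} space for larger $\sz$, so proving the embedding for a smaller $\sz'$ does not yield it for $\sz$.
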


\begin{proof}
First, notice that if $\mu(\cx)<\fz$, then
\begin{equation}\label{e2.11}
 \diam\cx\equiv\sup_{x,\,y\in\cx}{d(x,\,y)}<\fz.
\end{equation}
Indeed, suppose that $\diam\cx=\fz$. Fix a ball $B(x_0,\,r_0)\subset \cx$. By our assumptions on $\mu$, we have
$\mu(B(x_0,\,r_0))>0$. Notice that for any $x_1\in\cx$ with  $d(x_1,\,x_0)\ge 2r_0$, by the doubling property
and $B(x_0,\,r_0)\subset B(x_1,\,2d(x_1,\,x_0))$,
we have
$$\mu(B(x_1,\,\frac12d(x_1,\,x_0)))\ge (C_1)^{-1}4^{-n}\mu(B(x_1,\,2d(x_1,\,x_0)))\ge (C_1)^{-1}4^{-n}\mu(B(x_0,\,r_0)).$$
Let $r_1=2d(x_1,\,x_0)$. Since $B(x_1,\,\frac12d(x_1,\,x_0))\cap B(x_0,\,r_0)=\emptyset$,
we have $$\mu(\cx)> \mu(B(x_1,\,r_1))\ge [1+(C_1)^{-1}4^{-n}]\mu(B(x_0,\,r_0)).$$
Repeating this procedure for $N$ times, we can find
 $x_N\in\cx$  and $r_N>0$ such that
\begin{eqnarray*}
 \mu(\cx)&&>\mu(B(x_N,\,r_N))\ge [1+(C_1)^{-1}4^{-n}]\mu(B(x_{N-1},\,r_{N-1}))\\
&&\ge\cdots \ge [1+(C_1)^{-1}4^{-n}]^N\mu(B(x_0,\,r_0)),
\end{eqnarray*}
which tends to infinity as $N\to\fz$. This is a contradiction.
Thus $\diam\cx<\fz$.

Assume that  $2^{-k_0-1}\le\diam\cx< 2^{-k_0}$ for some $k_0\in\zz$.
Observe that
$$\|u\|_{\vec E\dot B_{p,\,q}(\cx)}
\sim \lf(\sum_{k\ge k_0-2} \lf\|\vec E(u)(\cdot,\,2^{-k})\r\|^q_{L^p(\cx)}\r)^{1/q}$$
and that for any $\vec g\in\bd^s(u)$, we can always take $g_k\equiv0$ for $k< k_0-2$.
Because of this, the proof of Theorem \ref{t1.2} for the case $\mu(\cx)<\fz$ is a slight modification of that
for the case $\mu(\cx)=\fz$ below.  In what follows, we only consider the case $\mu(\cx)=\fz$.

We first prove (ii) and (iii). Observing that
\begin{equation}\label{e2.12}
I^{s,\,\sz}_t(u)(x)\le S^{s,\,\ez,\,\sz}_t(u)(x)
\end{equation}
 for all $t\in(0,\,\fz)$ and $x\in\cx$,
we have $\vec S^{s,\,\ez,\,\sz}\dot B_{p,\,q}(\cx)\subset \vec I^{s,\,\sz}\dot B_{p,\,q}(\cx)$.
So it suffices to prove that $\vec I^{s,\,\sz}\dot B_{p,\,q}(\cx)\subset \dot N^s_{p,\,q}(\cx)\subset \vec S^{s,\,\ez,\,\sz}\dot B_{p,\,q}(\cx)$.

To prove $\vec I^{s,\,\sz}\dot B_{p,\,q}(\cx)\subset \dot N^s_{p,\,q}(\cx)$,
let $u\in \vec I^{s,\,\sz}\dot B_{p,\,q}(\cx)$
and $E$ with $\mu(E)=0$ be as in Lemma \ref{l2.3}. By Lemma \ref{l2.3}, it is easy to see  that
for $x,\,y\in \rn\setminus E$ and $d(x,\,y)\in[2^{-k-1}, 2^{-k})$,
\begin{equation} \label{e2.13}
|u(x)-u(y)| \le C[d(x,\,y)]^s \sum_{j\ge k-2}2^{(k-j)s}[I^{s,\,\sz}_{2^{-j}}(u)(x)+
I^{s,\,\sz}_{2^{-j}}(u)(y)].
\end{equation}
For $k\in\zz$, set
\begin{equation}\label{e2.14}
g_k\equiv \sum_{j\ge k-2}2^{(k-j)s}I^{s,\,\sz}_{2^{-j}}(u).
\end{equation}
Then $\vec g\equiv\{g_k\}_{k\in\zz}\in\bd^s(u)$ modulo a fixed constant and it is easy to check that
$$\|\vec g\|_{\ell^q(L^p(\cx))}
\ls\|\{ I^{s,\,\sz}_{2^{-k}}(u)\}_{k\in\zz}\|_{\ell^q(L^p(\cx))};$$
see the proof of \cite[Theorem 2.1]{kyz2} for details.
So, by Lemma \ref{l2.5}, $u\in \dot N^s_{p,\,q}(\cx)$ and
\begin{equation}\label{e2.15}
 \|u\|_{\dot N^s_{p,\,q}(\cx)}\le \|\vec g\|_{\ell^q(L^p(\cx))}
\ls\|\{ I^{s,\,\sz}_{2^{-k}}(u)\}_{k\in\zz}\|_{\ell^q(L^p(\cx))}\sim \|u\|_{\vec I^{s,\,\sz}\dot B_{p,\,q}(\cx)}.
\end{equation}
This leads to $\vec I^{s,\,\sz}\dot B_{p,\,q}(\cx)\subset \dot N^s_{p,\,q}(\cx)$.

To prove that
$\dot N^s_{p,\,q}(\cx)\subset \vec S^{s,\,\ez,\,\sz}\dot B_{p,\,q}(\cx)$,
since $\sz<p_\ast(s)$, we can choose $\ez'\in(0,\,s)$ and
$\dz\in(0,\,p)$ such that
$\sz\le \dz_\ast(\ez')=n\dz/(n-\ez'\dz)$.
We also let
$\ez''\in(\ez',\,s)$ and $\ez'''\in(0,\, \min\{s-\ez'',\,s-\ez\})$.
For given $u\in \dot N^s_{p,\,q}(\cx)$, take $\vec g\in\bd^s(u)$ with
$\|\vec g\|_{\ell^q(L^p(\cx))}\le 2\|u\|_{\dot N^s_{p,\,q}(\cx)}$.
Set $$h_k\equiv 2^{k\ez'''}\sum_{i\ge k}2^{-i\ez'''}g_i$$ for $k\in\zz$.
Then $\vec h\equiv\{h_k\}_{k\in\zz}\in\bd^s(u)$,
 $h_i\le 2^{(i-k)\ez'''}h_k$
for any $i\ge k$, and moreover, it is easy to check
\begin{equation}\label{e2.16}
\|\vec h\|_{\ell^q(L^p(\cx))}\ls \|\vec g\|_{\ell^q(L^p(\cx))}\ls \|u\|_{\dot N^s_{p,\,q}(\cx)};
\end{equation}
see the proof of \cite[Theorem 2.1]{kyz2} for details.
Then by Lemma \ref{l2.1}, for all $j\in\zz$ and $j\ge k$,
\begin{eqnarray}\label{e2.17}
I_{2^{-j}}^{\ez,\,\sz}(u)(x)&&= 2^{j\ez}\lf(\inf_{c\in\rr}\bint_{B(x,\,2^{-j})}|u(z)-c|^\sz\,d\mu(z) \r)^{1/\sz} \\
&&\le 2^{j\ez}\lf(\inf_{c\in\rr}\bint_{B(x,\,2^{-j})}|u(z)-c|^{n\dz/(n-\ez'\dz)}\,d\mu(z) \r)^{(n-\ez'\dz)/n\dz}\nonumber\\
&&\ls 2^{j\ez} 2^{-j\ez''}\sum_{i\ge j-2}2^{-i(s-\ez'')}\lf(\bint_{B(x,\,2^{-j+1})}[h_i(z)]^{\dz}\,d\mu(z)\r)^{1/\dz}\nonumber\\
&&\ls 2^{j(\ez-s)}\sum_{i\ge j-2}2^{(j-i)(s-\ez'')}\lf(\bint_{B(x,\,2^{-j+1})}[h_i(z)]^{\dz}\,d\mu(z)\r)^{1/\dz}\nonumber\\
&&\ls 2^{j(\ez-s)}\sum_{i\ge j-2}2^{(j-i)(s-\ez'')}\cm_{\dz}(h_i)(x)\nonumber\\
&&\ls 2^{j(\ez-s)}\sum_{i\ge j-2}2^{(j-i)(s-\ez'')}2^{(i-k)\ez'''}\cm_{\dz}(h_k)(x)\nonumber\\
&&\ls 2^{j(\ez-s)}2^{(j-k)\ez'''}\cm_{\dz }(h_k)(x).\nonumber
\end{eqnarray}
Here and in what follows $\cm$ denotes the {\it Hardy-Littlewood maximal operator} and
$\cm_\dz(u)\equiv[\cm(|u|^\dz)]^{1/\dz}$ for all $u\in L^\dz_\loc(\cx)$ and $\dz\in(0,\,\fz)$.
Thus for all $k\in\zz$,
\begin{eqnarray}\label{e2.18}
S_{2^{-k}}^{s,\,\ez,\,\sz}(u)(x)&&\ls2^{k(s-\ez)}\sup_{j\ge k}2^{j(\ez-s)} I^{\ez,\,\sz}_{2^{-j}}(u)(x)\\
 &&\ls\sup_{j\ge k}2^{-(j-k)(s-\ez)}2^{(j-k)\ez'''}\cm_{\dz}(h_k)(x)\ls \cm_{\dz}(h_k)(x).\nonumber
\end{eqnarray}
So, by the $L^{p/\dz}(\cx)$-boundedness of $\cm$, Lemma \ref{l2.5} and \eqref{e2.16},
we have $u\in \vec S ^{s,\,\ez,\,\sz}\dot B_{p,\,q}(\cx)$
and
 \begin{eqnarray*}
\|u\|_{\vec S^{s,\,\ez,\sz}\dot B_{p,\,q}(\cx)}&&\ls\| \{S_{2^{-k}}^{s,\,\ez,\,\sz}(u)\}_{k\in\zz}\|_{\ell^q(L^p(\cx))}\\
&&\ls
\|\{\cm_{\dz}(h_k)\}_{k\in\zz}\|_{\ell^q(L^p(\cx))}\ls \|\vec h\|_{\ell^q(L^p(\cx))} \ls\|u\|_{\dot N^s_{p,\,q}(\cx)}.
 \end{eqnarray*}
This yields $\dot N^s_{p,\,q}(\cx)\subset \vec S^{s,\,\ez,\,\sz}\dot B_{p,\,q}(\cx)$ and thus finishes the proofs of
(ii) and (iii).

Now we prove (i).  Since
\begin{equation}\label{e2.19}
I^{s,\,\sz}_t(u)(x)\le C^{s,\,\sz}_t(u)(x)
\end{equation}
for all $t\in(0,\,\fz)$ and $x\in\cx$,
 we have
$\vec C^{s,\,\sz}\dot B_{p,\,q}(\cx)\subset \vec I^{s,\,\sz}\dot B_{p,\,q}(\cx)$,
and hence by (ii),
$\vec C^{s,\,\sz}\dot B_{p,\,q}(\cx)\subset \dot N^s_{p,\,q}(\cx)$.
So we only need to show that $\dot N^s_{p,\,q}(\cx)\subset \vec C^{s,\,\sz}\dot B_{p,\,q}(\cx)$.
For given $u\in \dot N^s_{p,\,q}(\cx)$, take $\vec g\in\bd^s(u)$ with
$\|\vec g\|_{\ell^q(L^p(\cx))}\le 2\|u\|_{\dot N^s_{p,\,q}(\cx)}$.
Then by Lemma \ref{l2.1}, for all $k\in\zz$,
\begin{eqnarray*}
C_{2^{-k}}^{s,\,\sz}(u)(x)&&= 2^{ks}\lf(\sum_{j\ge k} \frac1{\mu(B(x,\,2^{-k}))}
\int_{B(x,\,2^{-j})\setminus B(x,\,2^{-j-1})}|u(z)- u(x)|^\sz\,d\mu(z) \r)^{1/\sz} \\
&&\le 2^{ks}\lf(\sum_{j\ge k-2}  2^{-js\sz}
\bint_{B(x,\,2^{-j})}\lf([g_j(z)]^\sz+[g_j(x)]^\sz\r)\,d\mu(z) \r)^{1/\sz}\\
&&=  \lf(\sum_{j\ge k} 2^{-(j-k)s\dz}[g_j(x)]^\sz+ \sum_{j\ge k} 2^{-(j-k)s\sz}
\bint_{B(x,\,2^{-j})}[g_j(z)]^\sz\,d\mu(z) \r)^{1/\sz}.
\end{eqnarray*}
If $p>\sz$, then  when $\sz\in(0,\,1)$, applying the
H\"older inequality, we have
\begin{eqnarray}\label{e2.20}
 C_{2^{-k}}^{s,\,\sz}(u)(x)&&\ls \lf(\sum_{j\ge k} 2^{-(j-k)s\sz}[\cm_\sz(g_j)(x)]^\sz\r)^{1/\sz}\\
&&\ls  \sum_{j\ge k} 2^{-(j-k)s/2}\cm_\sz(g_j)(x)\lf(\sum_{j\ge k} 2^{-(j-k)s\sz/2(1-\sz)}\r)^{(1-\sz)/\sz}\nonumber\\
&&\ls
\sum_{j\ge k} 2^{-(j-k)s/2}
\cm_\sz(g_j)(x),\nonumber
\end{eqnarray}
and  when $\sz\in[1,\,p)$, by $1/\sz\le1$,
 $$C_{2^{-k}}^{s,\,\sz}(u)(x)\ls\sum_{j\ge k} 2^{-(j-k)s}
\cm_\sz(g_j)(x).$$

From this, it is easy to deduce that
$$\|\{ C_{2^{-k}}^{s,\,\sz}(u)\}_{k\in\zz}\|_{\ell^q(L^p(\cx))}\ls
\|\{\cm_\sz(g_k)\}_{k\in\zz}\|_{\ell^q(L^p(\cx))}.$$
By this, the $L^{p/\sz}(\cx)$-boundedness of $\cm$ and Lemma \ref{l2.5},
we have $u\in \vec C^{s,\,\sz}\dot B_{p,\,q}(\cx)$ and
$$\|u\|_{\vec C^{s,\,\sz}\dot B_{p,\,q}(\cx)}\ls\|\{ C_{2^{-k}}^{s,\,\sz}(u)\}_{k\in\zz}\|_{\ell^q(L^p(\cx))}\ls
\|\vec g\|_{\ell^q(L^p(\cx))}
\ls\|u\|_{\dot N^s_{p,\,q}(\cx)}.$$
If $\sz=p$, then
\begin{eqnarray*}
 \|u\|_{\vec C^{s,\,p}\dot B_{p,\,q}(\cx)}&&\ls
 \lf\{\sum_{k\in\zz}\lf(\int_\cx \sum_{j\ge k} 2^{-(j-k)sp}[g_j(x)]^p\,d\mu(x)\r)^{q/p}\r\}^{1/q}\\
&&\quad\quad+ \lf\{\sum_{k\in\zz}\lf(\int_\cx\sum_{j\ge k} 2^{-(j-k)sp}
\bint_{B(x,\,2^{-j})}[g_j(z)]^p\,d\mu(z) \,d\mu(x)\r)^{q/p}\r\}^{1/q}\\
&&\ls
 \lf\{\sum_{k\in\zz}\lf(\int_\cx \sum_{j\ge k} 2^{-(j-k)sp}[g_j(x)]^p\,d\mu(x)\r)^{q/p}\r\}^{1/q}\\
&&\ls \|\vec g\|_{\ell^q(L^p(\cx))}\ls\|u\|_{\dot N^s_{p,\,q}(\cx)}.
\end{eqnarray*}
This   gives  $\dot N^s_{p,\,q}(\cx)\subset \vec C^{s,\,\sz}\dot B_{p,\,q}(\cx)$ and thus finishes the proof of  (i).

Finally, we prove (iv). Trivially,
\begin{equation}\label{e2.21}
 I^{s,\,\sz}_t(u)(x)\le   A^{s,\,\sz}_t(u)(x)
\end{equation}
for all $x\in\cx$ and $t\in(0,\,\fz)$, which implies that
$\vec A^{s,\,\sz}\dot B_{p,\,q}(\cx)\subset \vec I^{s,\,\sz}\dot B_{p,\,q}(\cx)$.
On the other hand, since
  $p> n/(n+s)$ and $\sz<p_\ast(s)$, we can find $\sz'\in(\max\{\sz,\,1\},\, p_\ast(s))$.
Notice that, for any $c\in\rr$, by the Minkowski inequality and the H\"older inequality,
\begin{eqnarray*}
 \lf(\bint_{B(x,\,t)}|u-u_{B(x,\,t)}|^{\sz'}\,d\mu(z)\r)^{1/\sz'}
&&\le \lf(\bint_{B(x,\,t)}|u-c|^{\sz'}\,d\mu(z) \r)^{1/\sz'}
+|c-u_{B(x,\,t)}|\\
&&\le2\lf(\bint_{B(x,\,t)}|u-c|^{\sz'}\,d\mu(z) \r)^{1/\sz'},
\end{eqnarray*}  which together with the H\"older inequality again implies that
\begin{equation}\label{e2.22}
 A^{s,\,\sz}_t(u)(x) \le  A^{s,\,\sz'}_t(u)(x)\le 2  I^{s,\,\sz'}_t(u)(x)
\end{equation}
for all $u\in L^\sz_\loc(\cx)$, $x\in\cx$ and $t\in(0,\,\fz)$.
Then $\vec I^{s,\,\sz'}\dot B_{p,\,q}(\cx)\subset\vec A^{s,\,\sz}\dot B_{p,\,q}(\cx)$.
Recall that
we have proved that
$\vec I^{s,\,\sz'}\dot B_{p,\,q}(\cx)=\dot N^s_{p,\,q}(\cx)=\dot B^s_{p,\,q}(\cx)=\vec I^{s,\,\sz}\dot B_{p,\,q}(\cx)$.
So we obtain (iv).
The proof of Theorem \ref{t2.1} is finished.
\end{proof}

\begin{rem}\label{r2.3}\rm
For the theory of Besov spaces on $\rn$ see, for example, \cite{t83,p76,t92},
and, on metric measure spaces, see \cite{hmy08,my09,gks09}.
When $s\in(0,\,1)$, $p\in(n/(n+s),\,\fz]$
and $q\in(0,\,\fz]$,
the equivalence $\dot B^s_{p,\,q}(\cx)=\vec C^{s,\,1}\dot B_{p,\,q}(\cx)$
 on a metric measure space satisfying both a
doubling and a reverse doubling property was established in \cite{my09} by M\"uller and Yang
and the pointwise characterization of
$\dot B^s_{p,\,q}(\cx)$  in \cite{kyz2}.
But Theorem \ref{t2.1} (hence Theorem \ref{t1.1} and Theorem \ref{t1.2})
does not require a reverse doubling property and  also
works for the whole index range.
\end{rem}

One can derive the following inequality from the proof of \eqref{e2.17}.

\begin{cor}\label{c2.1}
For $s\in(0,\,\fz)$, $\dz\in(0,\,\fz)$, $\sz\in(0,\,\dz_\ast(s))$  and  $\sz'\in(0,\,\fz)$,
there exist $\ez>0$ satisfying $\sz<\dz_\ast(s-\ez)$, and constant $C$ such that for all $u\in L^\sz_\loc(u)$,
$k\in\zz$  and $x\in\cx$,
\begin{equation} \label{e2.23}
 I_{2^{-k}}^{s,\,\sz}(u)(x)\le C 2^{-k\ez}\cm_{\dz}\lf(\sum_{j\ge k-2}I^{s-\ez,\,\sz'}_{2^{-j}}(u)\r)(x).
\end{equation}
\end{cor}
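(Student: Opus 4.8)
The plan is to run the pointwise chain of estimates \eqref{e2.17} essentially verbatim, the one extra ingredient being that, in place of a prescribed fractional Haj\l asz gradient of $u$, I first \emph{produce} one out of the quantities $I^{s-\ez,\,\sz'}_{2^{-j}}(u)$ via Lemma \ref{l2.3}. I may assume $u\in L^{\sz'}_\loc(\cx)$, since otherwise $\sum_{j\ge k-2}I^{s-\ez,\,\sz'}_{2^{-j}}(u)$ is infinite on a set of positive measure and the right-hand side of \eqref{e2.23} is identically $+\fz$. For the parameters: since $\sz<\dz_\ast(s)$ and $\ez\mapsto\dz_\ast(s-\ez)$ is continuous, I choose $\ez\in(0,\,s)$ small enough that $\dz<n/(s-\ez)$ and $\sz<\dz_\ast(s-\ez)=n\dz/(n-(s-\ez)\dz)<\fz$ --- this is the $\ez$ of the statement (when $\dz<n/s$ any small $\ez$ works; when $\dz\ge n/s$ one takes $\ez$ slightly above $s-n/\dz$) --- and then, using $\sz<\dz_\ast(s-\ez)$ once more, I fix $\eta<\eta'$ in $(0,\,s-\ez)$ with $\sz<\dz_\ast(\eta)$ (note $\dz<n/\eta$ is automatic).

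Next I would manufacture the gradient. Applying Lemma \ref{l2.3} with the exponent $\sz'$ in place of $\sz$, and using $\inf_{c\in\rr}(\bint_{B(x,\,2^{-j})}|u-c|^{\sz'}\,d\mu)^{1/\sz'}=2^{-j(s-\ez)}I^{s-\ez,\,\sz'}_{2^{-j}}(u)(x)$, one obtains a set $E$ with $\mu(E)=0$ such that, whenever $x,\,y\in\cx\setminus E$ with $d(x,\,y)\in[2^{-k-1},\,2^{-k})$, factoring out $[d(x,\,y)]^{s-\ez}$ (using $d(x,\,y)\ge 2^{-k-1}$) gives
\begin{equation*}
|u(x)-u(y)|\le [d(x,\,y)]^{s-\ez}[g_k(x)+g_k(y)],\qquad g_k\equiv C\sum_{j\ge k-2}2^{(k-j)(s-\ez)}I^{s-\ez,\,\sz'}_{2^{-j}}(u),
\end{equation*}
i.e. $\vec g\equiv\{g_k\}_{k\in\zz}\in\bd^{s-\ez}(u)$.

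Now I would run the chain of \eqref{e2.17}. By the H\"older inequality ($\sz<\dz_\ast(\eta)$), then Lemma \ref{l2.1} applied with $s-\ez,\,\dz,\,\eta,\,\eta'$ in the roles of $s,\,p,\,\ez,\,\ez'$ and with this $\vec g$, and then bounding the average over $B(x,\,2^{-k+1})$ by the maximal function,
\begin{equation*}
I_{2^{-k}}^{s,\,\sz}(u)(x)\le 2^{ks}\lf(\inf_{c\in\rr}\bint_{B(x,\,2^{-k})}|u-c|^{\dz_\ast(\eta)}\,d\mu\r)^{1/\dz_\ast(\eta)}\le C\,2^{k(s-\eta')}\sum_{j\ge k-2}2^{-j(s-\ez-\eta')}\cm_\dz(g_j)(x).
\end{equation*}
Since $2^{(j-i)(s-\ez)}\le 2^{2(s-\ez)}$ for $i\ge j-2$, and $\{i:i\ge j-2\}\subset\{i:i\ge k-4\}$ for $j\ge k-2$, one has $g_j\le C\sum_{i\ge k-4}I^{s-\ez,\,\sz'}_{2^{-i}}(u)$ for all such $j$; by monotonicity of $\cm_\dz$ this bounds $\cm_\dz(g_j)(x)$, uniformly in $j\ge k-2$, by $C\,\cm_\dz(\sum_{i\ge k-4}I^{s-\ez,\,\sz'}_{2^{-i}}(u))(x)$. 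As $s-\ez-\eta'>0$, the remaining sum $\sum_{j\ge k-2}2^{-j(s-\ez-\eta')}$ is a geometric series equal to $C\,2^{-k(s-\ez-\eta')}$, so collecting the powers of $2^k$ yields \eqref{e2.23} (with $k-4$ in place of $k-2$ inside $\cm_\dz$, an immaterial change of the summation range).

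The one point that needs care is the passage from $\cm_\dz(g_j)$ to $\cm_\dz(\sum_i I^{s-\ez,\,\sz'}_{2^{-i}}(u))$: when $\dz>1$ the operator $\cm_\dz$ is not subadditive over the sum defining $g_j$, so it cannot simply be moved inside; this is why I first dominate $g_j$, uniformly in $j$, by the single $j$-free quantity $C\sum_{i\ge k-4}I^{s-\ez,\,\sz'}_{2^{-i}}(u)$ and only then apply $\cm_\dz$. Everything else is the same geometric-series bookkeeping already done in the proof of \eqref{e2.17}.
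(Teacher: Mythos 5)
Your approach matches the paper's own terse sketch: produce a fractional Haj\l asz gradient from the quantities $I^{\cdot,\,\sz'}_{2^{-j}}(u)$ via Lemma~\ref{l2.3}, then run the H\"older--Lemma~\ref{l2.1}--maximal-function chain from \eqref{e2.17}. You correctly fill in the steps the paper leaves to the reader, including the reduction to $u\in L^{\sz'}_\loc(\cx)$, the parameter bookkeeping giving $\eta<\eta'$ in $(0,\,s-\ez)$ with $\sz<\dz_\ast(\eta)$, and --- the genuinely delicate point --- dominating $g_j$, uniformly in $j\ge k-2$, by the single $j$-free function $C\sum_{i\ge k-4}I^{s-\ez,\,\sz'}_{2^{-i}}(u)$ \emph{before} applying $\cm_\dz$, since $\cm_\dz$ is not subadditive for $\dz>1$. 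The cosmetic difference from the paper (you manufacture a gradient of order $s-\ez$, the paper suggests $g_j\equiv I^{s,\,\sz'}_{2^{-j}}(u)$ of order $s$ fed into the $h_k$ of \eqref{e2.17}) is immaterial; both produce the same estimate.

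But your last assertion, that ``collecting the powers of $2^k$ yields \eqref{e2.23},'' is not correct as written, and you should have flagged the discrepancy rather than glossing over it. Collecting the powers in your chain gives
\begin{equation*}
2^{k(s-\eta')}\cdot 2^{-k(s-\ez-\eta')}=2^{k\ez},
\end{equation*}
i.e.\ $2^{+k\ez}$, not the $2^{-k\ez}$ printed in \eqref{e2.23}. This is not a flaw in your derivation: the inequality as printed in Corollary~\ref{c2.1} cannot hold. Test it on $u(x)=x_1$ on $\rr^2$ with $s\in(0,\,1)$, $\sz=\sz'=\dz=1$ (so $\sz<\dz_\ast(s)=2/(2-s)$). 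Then $I^{s,\,1}_{2^{-k}}(u)\sim 2^{-k(1-s)}$, while $\sum_{j\ge k-2}I^{s-\ez,\,1}_{2^{-j}}(u)\sim 2^{-k(1-s+\ez)}$, so the printed right-hand side is $\sim 2^{-k\ez}\cdot 2^{-k(1-s+\ez)}=2^{-k(1-s+2\ez)}$ and \eqref{e2.23} would force $1\ls 2^{-2k\ez}$ for all $k$, which is absurd. With $2^{k\ez}$ in place of $2^{-k\ez}$ the two sides balance, and that is exactly what your chain (and equally the paper's sketched chain, run to the end) produces. So the statement contains a sign typo, your computation gives the corrected version, and the one thing missing from your proposal is the observation that what you obtained is not literally \eqref{e2.23}.
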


\begin{proof}
If $\sz'\ge\sz$, then \eqref{e2.23} is trivial or follows from the H\"older inequality.
If $\sz'<\sz$, then we  employ the argument for \eqref{e2.17} with the
special choice $g_j\equiv I_{2^{-j}}^{s,\,\sz'}(u)$ for all $j\ge k-2$.
We leave the details to the reader.
\end{proof}

We  close Section \ref{s2} by proving the optimality of
 the ranges of  $\ez$ and $\sz$ in Theorem \ref{t1.1}.

\begin{proof}[Proofs of (v) though (vii) of Theorem \ref{t1.1}.]
(v) For $\az\in(0,\,\fz)$, define
$$
u_\az(x)\equiv|x|^{-\az}\chi_{B(0,\,1)}(x)+ \chi_{\rn\setminus B(0,\,1)}(x).
$$
We first claim that
 $u_\az\in\dot B^s_{p,\,q}(\rn)$ when  $\az\in(0,\,n/p-s)$.
To see this,
for $j\le 0$, we set
$$g_j(x)\equiv 2^{js}|x|^{-\az}\chi_{B(0,\,1)}(x),$$
and, for $j\ge 1$, we set
$$g_j(x)\equiv 2^{js}|x|^{-\az}\chi_{B(0,\,2^{-j-3})}(x)+
2^{-j(1-s)}|x|^{-\az-1}\chi_{B(0,\,1)\setminus  {B(0,\,2^{-j-3})}}(x).$$
Then it is easy to check that $\vec g\equiv\{g_j\}_{j\in\zz}\in\bd^s(u_\az)$ modulo a fixed constant.
Moreover, since $p\az <n$, for $j\le0$, we have that
\begin{eqnarray*}
\|g_j\|^p_{L^p(\rn)}\le \int_{B(0,\,1)} 2^{jps}|x|^{-p\az}\,dx\ls2^{jps},
\end{eqnarray*}
and
for $j\ge1$,
\begin{eqnarray*}
\|g_j\|^p_{L^p(\rn)}&&\le\int_{B(0,\,2^{-j-3})} 2^{jps}|x|^{-p\az}\,dx\\
&&\quad+
\int_{B(0,\,1)\setminus  {B(0,\,2^{-j-3})}}2^{-jp(1-s)}|x|^{-p(\az+1) }\,dx\\
&&\ls 2^{j[p(s+\az)-n]}+2^{-jp(1-s)}.
\end{eqnarray*}
Observing that
 $s+\az<n/p$ and recalling that $s<1$,
we have $\|\vec g\|_{\ell^q(L^p(\rn))}<\fz$, which is as desired.
Now, taking $\az= n/\sz$ and noticing $s+\az<s+n/p_\ast(s)=n/p$,
we have $u_\az\in\dot N^s_{p,\,q}(\rn)$ and hence by Theorem \ref{t1.2}, 
$u_\az\in\dot B^s_{p,\,q}(\rn)$.
This yields (v) since $u_\az\notin L^\sz_\loc(\rn)$.

(vi)  Let $\az= n/\sz$. Since $\az+s<n/p$, as shown in (v),  $u_\az\in\dot N^s_{p,\,q}(\rn)$. 
Let us check that $\|u_\az\|_{\vec C^{s,\,\sz}\dot B_{p,\,q}(\rn)}=\fz$.
Indeed, if $1/2\le|x|\le 3/4$, then
$B(0,\,1/4)\subset B(x,\,1)$ and for all $z\in B(0,\,1/4)$,
$$|x|^{-\az}\le (1/2)^{-\az}= 2^{-\az} (1/4)^{-\az}\le 2^{-\az} |z|^{-\az},$$
which implies that
$|u(x)-u(z)|\ge(1-2^{-\az})|z|^{-\az}$, and hence
by $n=\az\sz$,
$$\bint_{B(x,\,1)}|u(x)-u(z)|^\sz\,dz\gs\bint_{B(0,\,1/4)}|z|^{-n}\,dz=\fz.$$
Therefore
$$\|u\|_{\vec C^{s,\,\sz}\dot B_{p,\,q}(\rn)}\gs  \lf\{\int_{B(0,\,3/4)\setminus B(0,\,1/2)}
\lf(\bint_{B(x,\,1)}|u(x)-u(z)|^\sz\,dz\r)^{p/\sz}\,dx \r\}^{1/p}=\fz$$
as desired. This gives (vi).

(vii) Let $\az= n/p-s$ and $\bz\in(-2/p,\,-1/p)$, and define
$$u(x)\equiv |x|^{-\az}\lf(\log\frac{2}{|x|}\r)^\bz \chi_{B(0,\,1)}(x)+ \chi_{\rn\setminus B(0,\,1)}(x).$$
We  claim that $u\in \dot N^s_{p,\,p}(\rn)$.
To see this, similarly to (v),
for $j\le 0$, we set
$$g_j(x)\equiv 2^{js}|x|^{-\az}\lf(\log\frac{2}{|x|}\r)^\bz\chi_{B(0,\,1)}(x),$$
and for $j\ge 1$, we set
\begin{eqnarray*}
 g_j(x)\equiv&&2^{js}|x|^{-\az}\lf(\log\frac{2}{|x|}\r)^\bz\chi_{B(0,\,2^{-j-3})}(x)\\
&&+
2^{-j(1-s)}|x|^{-\az-1}\lf(\log\frac{2}{|x|}\r)^\bz\chi_{B(0,\,1)\setminus  {B(0,\,2^{-j-3})}}(x).
\end{eqnarray*}
Then  $\vec g\equiv\{g_j\}_{j\in\zz}\in\bd^s(u)$ modulo a fixed constant.
Since $\az+s=n/p$ and $p\bz<-1$, we have that
\begin{eqnarray*}
\sum_{j\ge1}\|g_j\|^p_{L^p(\rn)}&&\ls\sum_{j\ge1}\int_{B(0,\,2^{-j-3})} 2^{jps}|x|^{-p\az}\lf(\log\frac{2}{|x|}\r)^{p\bz}\,dx\\
&&\quad\quad+\sum_{j\ge1}\int_{B(0,\,1)\setminus  {B(0,\,2^{-j-3})}}2^{-jp(1-s)}|x|^{-p(\az+1) }\lf(\log\frac{2}{|x|}\r)^{p\bz}\,dx\\
&&\ls\int_{B(0,\,1)} |x|^{-p(\az+s)}\lf(\log\frac{2}{|x|}\r)^{p\bz}\,dx\\
&&\ls \int_0^1\lf(\log\frac{2}t\r)^{p\bz}\frac{dt}t<\fz
\end{eqnarray*}
and that
$$
\sum_{j\le0}\|g_j\|^p_{L^p(\rn)}\ls\int_{B(0,\,1)} |x|^{-p(\az+s)}\lf(\log\frac{2}{|x|}\r)^{p\bz}\,dx<\fz. $$
Thus
$u\in\dot N^s_{p,\,p}(\rn)$. On the other hand, for any $x\in B(0,\,1/2)$ and all $t>|x|$,
\begin{eqnarray*}
S_t^{s,\,s,\,p}(u)(x)&&\ge |x|^{-s}\lf(\inf_{c\in\rr}\bint_{B(x,\,|x|)}|u(z)-c|^p\,dx\r)^{1/p}\\
&&\ge (2|x|)^{-s}\lf(\bint_{B(x,\,|x|)}|u(z)-u(x_0)|^p\,dx\r)^{1/p},
\end{eqnarray*}
where may choose $x_0\in B(x,\,|x|)$.
Moreover, up to a rotation, we can assume that $x_0=x|x_0|/|x|$. Observe that if $|x_0|\ge |x|$, then
for $z\in B(x/2,\,|x|/4)\subset B(0,\,3|x|/4)$,
$$|u(z)-u(x_0)|\ge  |u(3x/4)-u(x)|\gs u(x).$$
Moreover, if $|x_0|\le |x|$, then for $z\in B(3x/2,\,|x|/4)\subset B(0,\,1)\setminus B(0,\,5|x|/4)$,
$$|u(z)-u(x_0)|\ge  |u(5x/4)-u(x)|\gs u(x).$$
Hence by $B(x/2,\,|x|/4)\cup B(3x/2,\,|x|/4)\subset B(x,\,|x|)$ and $\az+s=n/p$, we have
 \begin{eqnarray*}
S_t^{s,\,s,\,p}(u)(x)&&\gs |x|^{-\az-s}\lf(\log\frac{2}{|x|}\r)^\bz\sim|x|^{-n/p}\lf(\log\frac{2}{|x|}\r)^\bz,
\end{eqnarray*}
from which together with $p\bz+1>-1$, it follows that
 \begin{eqnarray*}
\|u\|^p_{\vec S^{s,\,s,\,p}\dot B_{p,\,p}(\rn)}
&&\gs\sum_{j\ge0}\|S_{2^{-j}}^{s,\,s,\,p}(u)\|^p_{L^p({B(0,\,2^{-j-1})})}\\
&&\gs \sum_{j\ge0}\int_{B(0,\,2^{-j-1})}|x|^{-n}\lf(\log\frac{2}{|x|}\r)^{p\bz}\,dx\\
&&\sim  \int_{B(0,\,1)} |x|^{-n}\lf(\log\frac{2}{|x|}\r)^{p\bz+1}\,dx\\
&&\sim\int_0^{1/2}\lf(\log\frac{2}{t}\r)^{p\bz+1}\frac{dt}t =\fz.
\end{eqnarray*}
The proof of Theorem \ref{t2.1} is finished.
\end{proof}

\section{Triebel-Lizorkin spaces\label{s3}}

In what follows, for $p,\,q\in(0,\,\fz]$ and a sequence $\vec g$ of measurable functions,
we set  $\lf\| \vec g
\r\|_{L^p(\cx,\,\ell^q)}\equiv\lf\|  \| \vec g\|_{\ell^q}
\r\|_{L^p(\cx )}$.

\begin{defn}\rm\label{d3.1}
Let
$s\in (0,\fz)$  and $p,\,q\in(0,\,\fz]$.
The {\it homogeneous Haj\l asz-Triebel-Lizorkin space} $\dot M^s_{p,\,q}(\cx)$ is the {\it
space of all measurable functions} $u$ such that $\|u\|_{\dot M^s_{p,\,q}(\cx)} <\fz,$
where when $p\in(0,\,\fz)$ or $p,\,q=\fz$,
$$\|u\|_{\dot M^s_{p,\,q}(\cx)}\equiv\inf_{\vec g\in \bd^s (u)}\lf\| \vec g
\r\|_{L^p(\cx,\,\ell^q)},$$
and when $p=\fz$ and $q\in(0,\,\fz)$,
$$\|u\|_{\dot M^s_{\fz,\,q} (\cx)}\equiv
\inf_{\vec g\in \bd^s (u)}\sup_{k\in\zz}\sup_{x\in\cx}
\lf\{\sum_{j\ge k}\bint_{B(x,\,2^{-k})} [g_j(y)]^q\,d\mu(y)\r\}^{1/q}.
 $$
\end{defn}

\begin{defn}\label{d3.2}\rm
Let $s,\,\sz\in(0,\,\fz)$, $\ez\in[0,\,s]$ and
$p,\,q\in(0,\fz]$.
For $\vec E=\vec C^{s,\,\sz},
\vec A^{s,\,\sz}$, $\vec I^{s,\,\sz}$ or $\vec S^{s,\,\ez,\,\sz}$,
  the {\it homogeneous space} $\vec E\dot F_{p,\,q}(\cx)$ of Triebel-Lizorkin type is defined to be the
{\it collection of all } $u\in L^\sz_\loc(\cx)$ such that $
\|u\|_{\vec E\dot F_{p,\,q}(\cx)} <\fz
$, where when $p\in(0,\,\fz)$,
\begin{equation*}
\|u\|_{\vec E\dot F_{p,\,q}(\cx)}\equiv\lf\| \lf(\int_0^\fz  [\vec E(u)(\cdot,\,t)]^{q}\frac{dt}{t}
\r)^{1/q}\r\|_{L^p(\cx)}
\end{equation*}
with the usual modification made when  $q=\fz$,
 and when $p=\fz$ and $q\in(0,\,\fz)$,
\begin{equation*}
\|u\|_{\vec E\dot F_{p,\,q}(\cx)}\equiv \sup_{x\in\cx}\sup_{r>0}  \lf(\int_0^r \bint_{B(x,\,r)}  [\vec E(u)(y,\,t)]^{q}\,d\mu(y)\frac{dt}{t}
\r)^{1/q}
\end{equation*}
and when $p,\,q=\fz$, $\|u\|_{\vec E\dot F_{\fz,\,\fz}(\cx)}\equiv\|u\|_{\vec E\dot B_{\fz,\,\fz}(\cx)}$.
\end{defn}

\begin{thm}\label{t3.1}
Let $s\in(0,\fz)$ and  $p,\,q\in(0,\fz]$.  Let $r\equiv\min\{p,\,q\}$.

(i) If $\sz\in(0,\,r)$, then  $\dot M^s_{p,\,q}(\cx)=\vec C^{s,\,\sz}\dot F_{p,\,q}(\cx)$.

(ii) If $\sz\in(0,\,r_\ast(s))$, then $\dot M^s_{p,\,q}(\cx)
 =\vec I^{s,\,\sz}\dot F_{p,\,q}(\cx)$.

(iii) If $\ez\in[0,\,s)$ and $\sz\in(0,\,r_\ast(s))$,
then $\dot M^s_{p,\,q}(\cx) = \vec S^{s,\,\ez,\,\sz}\dot F_{p,\,q}(\cx).$

(iv) If $r\in(n/(n+s),\,\fz]$ and $\sz\in(0,\,r_\ast(s))$,
 then
$\dot M^s_{p,\,q}(\cx)=\vec A^{s,\,\sz}\dot F_{p,\,q}(\cx).$
\end{thm}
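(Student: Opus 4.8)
The plan is to rerun the proof of Theorem \ref{t2.1} almost verbatim, replacing the mixed norm $\|\cdot\|_{\ell^q(L^p(\cx))}$ by $\|\cdot\|_{L^p(\cx,\,\ell^q)}$, the space $\dot N^s_{p,\,q}(\cx)$ by $\dot M^s_{p,\,q}(\cx)$, and the scalar $L^{p/\sz}(\cx)$-boundedness of $\cm$ by the Fefferman--Stein vector-valued maximal inequality $\|\{\cm_\dz(f_k)\}_{k\in\zz}\|_{L^p(\cx,\,\ell^q)}\ls\|\{f_k\}_{k\in\zz}\|_{L^p(\cx,\,\ell^q)}$, which on a doubling space holds for $p<\fz$ and $0<\dz<\min\{p,\,q\}=r$. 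This change of the admissible exponent from ``below $p$'' to ``below $r$'' is exactly what turns $p_\ast(s)$ into $r_\ast(s)$ in parts (ii)--(iv); the shrinking of the range in part (i) from $(0,\,p]$ to $(0,\,r)$ has the same source, because for $q\ne p$ the Fubini computation that disposed of the endpoint $\sz=p$ in the proof of Theorem \ref{t2.1}(i) is unavailable and one must work with $\cm_\sz$ throughout. First I would record the $\dot F$-analogue of Lemma \ref{l2.5}: for $p<\fz$ and $\vec E\in\{\vec C^{s,\,\sz},\vec A^{s,\,\sz},\vec I^{s,\,\sz},\vec S^{s,\,\ez,\,\sz}\}$, $\|u\|_{\vec E\dot F_{p,\,q}(\cx)}\sim\|\{\vec E(u)(\cdot,\,2^{-k})\}_{k\in\zz}\|_{L^p(\cx,\,\ell^q)}$, and, for $p=\fz$ and $q<\fz$, the corresponding reformulation of $\|\cdot\|_{\vec E\dot F_{\fz,\,q}(\cx)}$ in the dyadic Carleson-supremum form of $\|\cdot\|_{\dot M^s_{\fz,\,q}(\cx)}$ of Definition \ref{d3.1}; as in Lemma \ref{l2.5} this follows from $\vec E(u)(x,\,t)\ls\vec E(u)(x,\,2^{-k+1})$ for $t\in(2^{-k},\,2^{-k+1}]$. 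The reduction to the case $\mu(\cx)=\fz$ (and, for $\mu(\cx)<\fz$, to the scales $k\ge k_0-2$) is verbatim as in Theorem \ref{t2.1}, and the case $p=q=\fz$ follows from Theorems \ref{t1.1} and \ref{t1.2}, since there $\dot M^s_{\fz,\,\fz}(\cx)=\dot N^s_{\fz,\,\fz}(\cx)$ and $\vec E\dot F_{\fz,\,\fz}(\cx)=\vec E\dot B_{\fz,\,\fz}(\cx)$.

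With these preliminaries in hand, the four pairs of inclusions follow the route of Theorem \ref{t2.1}. The easy inclusions $\vec S^{s,\,\ez,\,\sz}\dot F_{p,\,q}(\cx)\subset\vec I^{s,\,\sz}\dot F_{p,\,q}(\cx)$, $\vec C^{s,\,\sz}\dot F_{p,\,q}(\cx)\subset\vec I^{s,\,\sz}\dot F_{p,\,q}(\cx)$ and $\vec A^{s,\,\sz}\dot F_{p,\,q}(\cx)\subset\vec I^{s,\,\sz}\dot F_{p,\,q}(\cx)$ follow at once from the pointwise comparisons \eqref{e2.12}, \eqref{e2.19}, \eqref{e2.21}. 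For $\vec I^{s,\,\sz}\dot F_{p,\,q}(\cx)\subset\dot M^s_{p,\,q}(\cx)$ I would use Lemma \ref{l2.3} to form the gradient $g_k\equiv\sum_{j\ge k-2}2^{(k-j)s}I^{s,\,\sz}_{2^{-j}}(u)\in\bd^s(u)$ (modulo a fixed constant), bound $\|\vec g\|_{L^p(\cx,\,\ell^q)}\ls\|\{I^{s,\,\sz}_{2^{-k}}(u)\}_{k\in\zz}\|_{L^p(\cx,\,\ell^q)}$ by Young's inequality for $\ell^q$-convolution applied pointwise in $x$ (the kernel $\{2^{-js}\chi_{\{j\ge-2\}}\}_j$ is in $\ell^1$ since $s>0$), and invoke the $\dot F$-version of Lemma \ref{l2.5}. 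For $\dot M^s_{p,\,q}(\cx)\subset\vec S^{s,\,\ez,\,\sz}\dot F_{p,\,q}(\cx)$, since $\sz<r_\ast(s)$ I would pick $\dz\in(0,\,r)$ and $\ez'\in(0,\,s)$ with $\sz\le\dz_\ast(\ez')$, pass to the smoothed gradient $\vec h$ as in \eqref{e2.16}, and reproduce the pointwise chain \eqref{e2.17}--\eqref{e2.18} (which rests on Lemma \ref{l2.1}) to get $S^{s,\,\ez,\,\sz}_{2^{-k}}(u)(x)\ls\cm_\dz(h_k)(x)$; the vector-valued maximal inequality with exponent $\dz<r$ then finishes it. Part (i) is the same with $C^{s,\,\sz}_{2^{-k}}(u)(x)\ls\sum_{j\ge k}2^{-(j-k)s/2}\cm_\sz(g_j)(x)$ (or the stronger decay $2^{-(j-k)s}$ when $\sz\ge1$) from \eqref{e2.20} followed by the vector-valued inequality with exponent $\sz<r$. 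Part (iv) follows by interposing $\sz'\in(\max\{\sz,\,1\},\,r_\ast(s))$ — nonempty precisely because $r>n/(n+s)$ makes $r_\ast(s)>1$ — and combining \eqref{e2.22} with part (ii), to get $\dot M^s_{p,\,q}(\cx)=\vec I^{s,\,\sz'}\dot F_{p,\,q}(\cx)\subset\vec A^{s,\,\sz'}\dot F_{p,\,q}(\cx)\subset\vec A^{s,\,\sz}\dot F_{p,\,q}(\cx)\subset\vec I^{s,\,\sz}\dot F_{p,\,q}(\cx)=\dot M^s_{p,\,q}(\cx)$.

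The step I expect to be the main obstacle is the case $p=\fz$, $q<\fz$, where $\|\cdot\|_{\dot M^s_{\fz,\,q}(\cx)}$ and $\|\cdot\|_{\vec E\dot F_{\fz,\,q}(\cx)}$ are Carleson-type quantities relative to the balls $B(x,\,2^{-k})$ and the vector-valued maximal inequality above is unavailable. The point is that every pointwise estimate used above is local at the controlled scale $2^{-k+1}$: Lemma \ref{l2.3} involves only balls about $x$ and $y$, and the right-hand side of Lemma \ref{l2.1} lives on $B(x,\,2^{-k+1})$. So, fixing $B_0=B(x_0,\,2^{-k_0})$, for $x\in B_0$ and $j\ge k_0-2$ the balls $B(x,\,2^{-j+1})$ all lie in a bounded dilate $CB_0$; replacing $h_i$ (resp. $g_i$) by $h_i\chi_{CB_0}$ before applying $\cm_\dz$ in \eqref{e2.17} (resp. $\cm_\sz$ in \eqref{e2.20}), the scalar $L^{q/\dz}(\cx)$-boundedness of $\cm$ (valid because $\dz<q=r$) produces exactly $\bint_{CB_0}\sum_{k\ge k_0}[h_k]^q\,d\mu\ls\|u\|_{\dot M^s_{\fz,\,q}(\cx)}^q$, while the tail scales are absorbed by the geometric factors $2^{(j-i)(s-\ez'')}$ already present in \eqref{e2.17}; the reverse inclusion $\vec I^{s,\,\sz}\dot F_{\fz,\,q}(\cx)\subset\dot M^s_{\fz,\,q}(\cx)$ is handled the same way, the $p=\fz$ discretization recorded above playing the role of the $\dot F$-version of Lemma \ref{l2.5}. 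Thus for $p=\fz$ only the scalar maximal inequality on bounded dilates of the fixed ball is needed. The remaining work — the $\ell^q$-Young estimates for $\vec g$ and $\vec h$, the finite-measure reduction, and the discretization of the $t$-integrals — is routine and parallel to Section \ref{s2}.
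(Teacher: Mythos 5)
Your proposal is correct and follows the same route as the paper's own (one-paragraph) proof: rerun the argument for Theorem \ref{t2.1} verbatim with the mixed norm $\|\cdot\|_{L^p(\cx,\,\ell^q)}$ in place of $\|\cdot\|_{\ell^q(L^p(\cx))}$ and the Fefferman--Stein vector-valued maximal inequality in place of the scalar $L^{p/\sz}$-boundedness of $\cm$, which is exactly what produces the replacement of $p_\ast(s)$ by $r_\ast(s)$ and kills the endpoint $\sz=p$ in part (i). Your additional paragraph handling $p=\fz$, $q<\fz$ via localization to a fixed ball $CB_0$ and the scalar $L^{q/\dz}$-boundedness of $\cm$ is a genuine and correct supplement that the paper's sketch leaves implicit (the cited Fefferman--Stein inequality as stated needs $p<\fz$), and you have also silently corrected the paper's typo $\ez'\in(0,\ez)$ to the intended $\ez'\in(0,s)$.
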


\begin{proof}
The proof of Theorem \ref{t3.1} is similar to
that of Theorem \ref{t2.1}. We only sketch it.

By \eqref{e2.12}, we have
$\vec S^{s,\,\ez,\,\sz}\dot F_{p,\,q}(\cx)\subset \vec I^{s,\,\sz}\dot F_{p,\,q}(\cx)$.

For $u\in \vec I^{s,\,\sz}\dot F_{p,\,q}(\cx)$,
by taking $\vec g\equiv\{g_k\}_{k\in\zz}$ as in \eqref{e2.14}, similarly to   \eqref{e2.15},
we can show that
$\|u\|_{\dot M^s_{p,\,q}(\cx)}\ls \|u\|_{\vec I^{s,\,\sz}\dot F_{p,\,q}(\cx)}$.
Hence, $\vec I^{s,\,\sz}\dot F_{p,\,q}(\cx)\subset \dot M^s_{p,\,q}(\cx)$.

The result $\dot M^s_{p,\,q}(\cx)\subset \vec S^{s,\,\ez,\,\sz}\dot F_{p,\,q}(\cx)$
follows from  an  argument similar to  that for  $\dot N^s_{p,\,q}(\cx)\subset \vec S^{s,\,\ez,\,\sz}\dot B_{p,\,q}(\cx)$,
where the inequality \eqref{e2.18} plays an important role.
The restriction $\sz\in(0,\,r_\ast(s))$ ensures the existence of $\dz\in(0,\,r)$
and $\ez'\in(0,\,\ez)$ such that $\sz\le\dz_\ast(\ez')$.
Moreover, by $\dz\in(0,\,r)$, we can use
 the Fefferman-Stein maximal inequality (see \cite{gly08}) to obtain
$$\|\{\cm_{\dz}(h_k)\}_{k\in\zz}\|_{L^p(\cx,\,\ell^q)}\ls \|\vec h\|_{L^p(\cx,\,\ell^q)}.$$
This gives   (ii) and (iii).

For (i), by \eqref{e2.19}, we  have
$\vec C^{s,\,\sz}\dot F_{p,\,q}(\cx)\subset\vec I^{s,\,\sz}\dot F_{p,\,q}(\cx)\subset \dot M^s_{p,\,q}(\cx)$.
The converse result $\dot M^s_{p,\,q}(\cx)\subset\vec C^{s,\,\sz}\dot F_{p,\,q}(\cx)$
follows from \eqref{e2.20} and an argument similar to the proof of
$\dot N^s_{p,\,q}(\cx)\subset\vec C^{s,\,\sz}\dot B_{p,\,q}(\cx)$ for $\sz\in(0,\,p)$.
Here the restriction $\sz\in(0,\,r)$ comes from
the Fefferman-Stein maximal inequality  used to prove
$$\|\{\cm_{\sz}(g_k)\}_{k\in\zz}\|_{L^p(\cx,\,\ell^q)}\ls \|\vec g\|_{L^p(\cx,\,\ell^q)}.$$
This gives (i).

For (iv), the equivalence $\vec A^{s,\,\sz}\dot F_{p,\,q}(\cx)=\dot M^s_{p,\,q}(\cx)$
follows from
\eqref{e2.21}, \eqref{e2.22} with $\sz'\in (\max\{\sz,\,1\},\,r_\ast(s))$ and (ii).
This gives (iv) and hence finishes the proof of Theorem \ref{t3.1}.
\end{proof}

Notice that, in Theorem \ref{t3.1} (iii), we have the restriction $\ez\in[0,\,s)$.
However, when $\ez=s$ and $q=\fz$, we have the following result.

\begin{thm}\label{t3.2}
Let $s\in(0,\fz)$ and  $p\in(0,\fz]$.
If $\sz\in(0,\,p_\ast(s))$,
then $\dot M^s_{p,\,\fz}(\cx) = \vec S^{s,\,s,\,\sz}\dot F_{p,\,\fz}(\cx)$.
\end{thm}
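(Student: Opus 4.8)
The plan is to follow the blueprint of Theorem \ref{t3.1}, paying attention to the two features that distinguish the case $\ez=s$, $q=\fz$: first, the sharp maximal quantity $S^{s,\,s,\,\sz}_t(u)$ no longer controls $I^{s,\,\sz}_t(u)$ with a summable gain (the gain $2^{(j-k)\ez'''}$ in \eqref{e2.18} degenerates when $\ez=s$), so one must work directly with the supremum structure of $\ell^\fz$; and second, since $q=\fz$, the relevant maximal inequality is just the ordinary $L^{p/\dz}(\cx)$-boundedness of $\cm$ applied to the single function $\sup_{k}2^{ks}\,(\cdots)$, not a Fefferman--Stein vector-valued inequality, which is exactly why only $\sz<p_\ast(s)$ (and not $\sz<r_\ast(s)$ with $r=\min\{p,q\}$) is needed.

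First I would record that $I^{s,\,\sz}_t(u)(x)\le S^{s,\,s,\,\sz}_t(u)(x)$ for all $x,t$ (this is \eqref{e2.12} with $\ez=s$), so $\vec S^{s,\,s,\,\sz}\dot F_{p,\,\fz}(\cx)\subset\vec I^{s,\,\sz}\dot F_{p,\,\fz}(\cx)$, and then invoke Theorem \ref{t3.1}(ii) — valid here because $r_\ast(s)=p_\ast(s)$ when $q=\fz$ — to get $\vec I^{s,\,\sz}\dot F_{p,\,\fz}(\cx)=\dot M^s_{p,\,\fz}(\cx)$. Hence one inclusion, $\vec S^{s,\,s,\,\sz}\dot F_{p,\,\fz}(\cx)\subset\dot M^s_{p,\,\fz}(\cx)$, is immediate, and it remains to prove $\dot M^s_{p,\,\fz}(\cx)\subset\vec S^{s,\,s,\,\sz}\dot F_{p,\,\fz}(\cx)$.

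For the remaining inclusion, take $u\in\dot M^s_{p,\,\fz}(\cx)$ and $\vec g\in\bd^s(u)$ with $\|\vec g\|_{L^p(\cx,\,\ell^\fz)}\le 2\|u\|_{\dot M^s_{p,\,\fz}(\cx)}$; since $\sz<p_\ast(s)$ choose $\dz\in(0,\,p)$ and $\ez'\in(0,\,s)$ with $\sz\le\dz_\ast(\ez')$, and set $G\equiv\sup_{j\in\zz}g_j=\|\vec g\|_{\ell^\fz}$, so $\|G\|_{L^p(\cx)}=\|\vec g\|_{L^p(\cx,\,\ell^\fz)}$. Apply Corollary \ref{c2.1} with the roles $s\leadsto\ez'$ (so $\sz<\dz_\ast(\ez')$), but more directly run the chain of estimates in \eqref{e2.17}: using Lemma \ref{l2.1} to pass from the $\sz$-average to the $\dz$-average at exponent $\dz_\ast(\ez')$, then bounding $g_i\le G$ and pulling $\cm_\dz(G)(x)$ out of the sum $\sum_{i\ge j-2}2^{(j-i)(s-\ez'')}$, which converges as long as $\ez''\in(\ez',\,s)$. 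This yields, for every $x$ and every $r\in(0,\,t]$,
\begin{equation*}
r^{-s}\lf(\inf_{c\in\rr}\bint_{B(x,\,r)}|u(y)-c|^\sz\,d\mu(y)\r)^{1/\sz}\ls \cm_\dz(G)(x),
\end{equation*}
where the implicit constant is independent of $r$ and $x$; taking the supremum over $r\in(0,\,t]$ and the $t^{0}$ prefactor (recall $\ez=s$ kills the $t$-power) gives $S^{s,\,s,\,\sz}_t(u)(x)\ls\cm_\dz(G)(x)$ uniformly in $t$. Consequently $\lf(\int_0^\fz[S^{s,\,s,\,\sz}_t(u)(x)]^\fz\,\tfrac{dt}{t}\r)^{1/\fz}=\sup_{t>0}S^{s,\,s,\,\sz}_t(u)(x)\ls\cm_\dz(G)(x)$, and taking the $L^p(\cx)$ norm and using the $L^{p/\dz}(\cx)$-boundedness of $\cm$ (here $p/\dz>1$) gives $\|u\|_{\vec S^{s,\,s,\,\sz}\dot F_{p,\,\fz}(\cx)}\ls\|G\|_{L^p(\cx)}\ls\|u\|_{\dot M^s_{p,\,\fz}(\cx)}$. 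The case $\mu(\cx)<\fz$ and the case $p=\fz$ are handled by the same modifications indicated at the start of the proof of Theorem \ref{t2.1} and in Definition \ref{d3.2}. The main obstacle — and the reason $q=\fz$ is essential — is precisely that with $\ez=s$ the telescoping gain disappears, so one cannot sum a geometric series in $\ell^q$ for finite $q$; replacing $g_i$ by the single envelope $G=\sup_j g_j$ and exploiting that $\ell^\fz$ only sees this envelope is what rescues the argument.
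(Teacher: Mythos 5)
Your proof is correct and follows essentially the same strategy as the paper, with two minor variations. For the inclusion $\vec S^{s,\,s,\,\sz}\dot F_{p,\,\fz}(\cx)\subset\dot M^s_{p,\,\fz}(\cx)$, the paper constructs a Haj\l asz gradient directly by taking $g_k\equiv S^{s,\,s,\,\sz}_{2^{-k+2}}(u)$ and checking $\vec g\in\bd^s(u)$ via \eqref{e2.13}, whereas you go through $\vec I^{s,\,\sz}\dot F_{p,\,\fz}(\cx)$ and invoke Theorem \ref{t3.1}(ii) (valid since $r=\min\{p,\fz\}=p$); both are fine and yours is arguably more economical. For the reverse inclusion, you both collapse the gradient sequence to the single envelope $g\equiv\sup_j g_j\in\cd^s(u)$ with $\|g\|_{L^p(\cx)}=\|\vec g\|_{L^p(\cx,\,\ell^\fz)}$, which is the crucial step that only works for $q=\fz$. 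The paper then simply chooses $\dz\in(0,\,p)$ with $\sz=\dz_\ast(s)$ and applies Lemma \ref{l2.2} (the single-gradient Poincar\'e inequality) once to get $S^{s,\,s,\,\sz}_{2^{-k+2}}(u)(x)\le\cm_\dz(g)(x)$, while you re-run the chain \eqref{e2.17} through Lemma \ref{l2.1} with auxiliary parameters $\ez',\ez''$ and then sum the resulting geometric series after bounding $g_i\le g$. The net estimate is identical; the paper's route is a bit cleaner because Lemma \ref{l2.2} is the natural tool once a single $s$-gradient is available, avoiding the $\ez',\ez''$ bookkeeping, but your argument is equally valid and correctly isolates why $q=\fz$ permits the full range $\sz<p_\ast(s)$ and why the $\ez<s$ restriction of Theorem \ref{t3.1}(iii) disappears.
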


 \begin{proof}
To see $\vec S^{s,\,s,\,\sz}\dot F_{p,\,\fz}(\cx)\subset \dot M^s_{p,\,\fz}(\cx) $, let $u\in \vec S^{s,\,s,\,\sz}\dot F_{p,\,\fz}(\cx)$.
By \eqref{e2.13} and taking $\vec g\equiv\{g_k\}_{k\in\zz}$ with
$g_k=S^{s,\,s,\,\sz}_{2^{-k+2}}(u)$, we have $\vec g\in\bd^s(u)$
and $$\|\vec g\|_{L^p(\cx,\,\ell^\fz)}\le \|\{S^{s,\,s,\,\sz}_{2^{-k}}(u)\}_{k\in\zz}\|_{L^p(\cx,\,\ell^\fz)}\sim\|u\|_{\vec S^{s,\,s,\,\sz}\dot F_{p,\,\fz}(\cx)},$$
which implies that
$u\in \dot M^{s}_{p,\,\fz}(\cx)$ and
$\|u\|_{\dot M^{s}_{p,\,\fz}(\cx)}\ls \|u\|_{\vec S^{s,\,s,\,\sz}\dot F_{p,\,\fz}(\cx)}$.

Conversely, let $u\in \dot M^{s}_{p,\,\fz}(\cx)$ and $\vec g\in\bd^s(u)$ with
$\|\vec g\|_{L^p(\cx,\,\ell^\fz)}\le 2  \|u\|_{\dot M^{s}_{p,\,\fz}(\cx)}$.
Taking $g\equiv\sup_{k\in\zz} g_k=\|\vec g\|_{ \ell^\fz}$, we have $g\in\cd^s(u)$ and
$\|g\|_{L^p(\cx)}\ls \|u\|_{\dot M^{s}_{p,\,\fz}(\cx)}$.
By $\sz\in(0,\,p_\ast(s))$, let  $\dz\in(0,\,p)$ such that $\sz=\dz_\ast(s)$.
Then by Lemma \ref{l2.2}, for all $x\in\cx$ and $k\in\zz$,
$$S^{s,\,s,\,\sz}_{2^{-k+2}}(u)(x)\le \cm_\dz(g)(x),$$
which together with  the $L^{p/\dz}(\cx)$-boundedness of $\cm$
implies that $u\in \vec S^{s,\,s,\,\sz}\dot F_{p,\,\fz}(\cx)$
and
$\|u\|_{\vec S^{s,\,s,\,\sz}\dot F_{p,\,\fz}(\cx)}\ls\|u\|_{\dot M^{s}_{p,\,\fz}(\cx)}$.
This finishes the proof of Theorem \ref{t3.2}.
\end{proof}

Let $s,\,\sz\in(0,\,\fz)$ and recall  the classical {\it fractional sharp maximal functions}
$$u^{\sharp,\,s}(x)\equiv\sup_{t\in(0,\,\fz)} t^{-s}\bint_{B(x,\,t)}|u(z)-u_{B(x,\,t)}|\,d\mu(z)$$
and $$u^{\sharp,\,s}_\sz(x)\equiv\sup_{t\in(0,\,\fz)} t^{-s}\inf_{c\in\rr}\lf(\bint_{B(x,\,t)}|u(z)-c|^\sz\,d\mu(z)\r)^{1/\sz}.$$
The {\it Sobolev-type space} $\dot\ccc^{s,\,p}(\cx)$ of Calder\'on and DeVore-Sharpley is defined as
the {\it collection of all locally integrable functions $u$}
such that $\|u\|_{\dot\ccc^{s,\,p}(\cx)}\equiv\|u^{\sharp,\,s}\|_{L^p(\cx)}<\fz;$
see \cite{ds84,s07}.
Also observe that
$$u^{\sharp,\,s}_\sz(u)(x)=  \sup_{t\in(0,\,\fz)} S^{s,\,s,\,\sz}_t(u)(x)\sim
\|\vec S^{s,\,s,\sz}(u)(x)\|_{\ell^\fz},$$
and hence $\|u\|_{\vec S^{s,\,s,\,\sz}\dot F_{p,\,\fz}(\cx)}=\|u^{\sharp,\,s}_\sz(u)\|_{L^p(\cx)}.$
On the other hand, recall from \cite{kyz2} that $\dot M^s_{p,\,\fz}(\cx)$ is simply the {\it Haj\l asz-Sobolev space} $\dot M^{s,\,p}(\cx)$. Here $\dot M^{s,\,p}(\cx)$ is the {\it collection of all functions $u$} such that
 $$\|u\|_{\dot M^{s,\,p}(\cx)}\equiv\inf_{g\in\cd^s(u)}\|g\|_{L^p(\cx)}<\fz,$$
where $\cd^s(u)$ is the {\it set of all $s$-gradients of $u$} as in \eqref{e2.1}.
Then, as a consequence of Theorem \ref{t3.1} and Theorem \ref{t3.2}, we have the following corollary.

\begin{cor}\label{c3.1} Let $s\in(0,\,\fz)$ and $p\in(0,\,\fz]$.

(i) If $\sz\in(0,\,p_\ast(s))$, then
$u\in \dot M^{s,\,p}(\cx)$ if and only if $u^{\sharp,\,s}_\sz\in L^p(\cx)$,
and  moreover, for every $u\in \dot M^{s,\,p}(\cx)$,
$\|u\|_{\dot M^{s,\,p}(\cx)}\sim  \|u^{\sharp,\,s}_\sz(u)\|_{L^p(\cx)}$.

(ii) If  $p\in(n/(n+s),\,\fz]$, then $\dot \ccc^{s,\,p}(\cx)=\dot M^{s,\,p}(\cx)$.
\end{cor}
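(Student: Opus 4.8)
The plan is to read off both parts directly from Theorem~\ref{t3.2} and Theorem~\ref{t3.1}(iv), using the two identifications recorded immediately before the statement, namely $\dot M^s_{p,\,\fz}(\cx)=\dot M^{s,\,p}(\cx)$ and $\|u\|_{\vec S^{s,\,s,\,\sz}\dot F_{p,\,\fz}(\cx)}=\|u^{\sharp,\,s}_\sz(u)\|_{L^p(\cx)}$; no new estimate is needed, only bookkeeping of parameters.

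For (i), note that the hypothesis $\sz\in(0,\,p_\ast(s))$ is exactly the range appearing in Theorem~\ref{t3.2}, which therefore gives $\dot M^s_{p,\,\fz}(\cx)=\vec S^{s,\,s,\,\sz}\dot F_{p,\,\fz}(\cx)$ with comparable (quasi-)norms. Rewriting the left-hand space as $\dot M^{s,\,p}(\cx)$ and the right-hand (quasi-)norm as $\|u^{\sharp,\,s}_\sz(u)\|_{L^p(\cx)}$ yields at once the norm comparison $\|u\|_{\dot M^{s,\,p}(\cx)}\sim\|u^{\sharp,\,s}_\sz(u)\|_{L^p(\cx)}$, together with the equivalence of membership: one inclusion is immediate since $\vec S^{s,\,s,\,\sz}\dot F_{p,\,\fz}(\cx)\subset L^\sz_\loc(\cx)$ by definition, and for the other one uses the routine observation that finiteness a.e.\ of $u^{\sharp,\,s}_\sz$ already forces $u\in L^\sz_\loc(\cx)$.

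For (ii), I would first record the elementary identity $u^{\sharp,\,s}(x)=\sup_{t>0}A^{s,\,1}_t(u)(x)=\sup_{t>0}\vec A^{s,\,1}(u)(x,\,t)$, which follows straight from the definitions since $A^{s,\,1}_t(u)(x)=t^{-s}\bint_{B(x,\,t)}|u(z)-u_{B(x,\,t)}|\,d\mu(z)$. Unwinding the $q=\fz$ convention in Definition~\ref{d3.2} (and, when $p=\fz$, the standard interchange of $\sup_{t>0}$ with the essential supremum in $x$) then gives $\|u\|_{\dot\ccc^{s,\,p}(\cx)}=\|u^{\sharp,\,s}\|_{L^p(\cx)}\sim\|u\|_{\vec A^{s,\,1}\dot F_{p,\,\fz}(\cx)}$, the underlying sets $\dot\ccc^{s,\,p}(\cx)$ and $\vec A^{s,\,1}\dot F_{p,\,\fz}(\cx)$ coinciding. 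Now apply Theorem~\ref{t3.1}(iv) with $q=\fz$, so that $r=\min\{p,\,\fz\}=p$, and with $\sz=1$: the hypothesis $p\in(n/(n+s),\,\fz]$ is precisely what forces $1\in(0,\,p_\ast(s))=(0,\,r_\ast(s))$, because for $p<n/s$ one has $p_\ast(s)=np/(n-ps)>1$ iff $p(n+s)>n$, while for $p\ge n/s$ one has $p_\ast(s)=\fz$. Theorem~\ref{t3.1}(iv) then yields $\dot M^s_{p,\,\fz}(\cx)=\vec A^{s,\,1}\dot F_{p,\,\fz}(\cx)$, which combined with $\dot M^s_{p,\,\fz}(\cx)=\dot M^{s,\,p}(\cx)$ and the preceding identification gives $\dot M^{s,\,p}(\cx)=\dot\ccc^{s,\,p}(\cx)$.

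I do not anticipate a genuine obstacle: the corollary amounts to matching parameter ranges together with the recognition that $u^{\sharp,\,s}_\sz$ (resp.\ $u^{\sharp,\,s}$) is the $\ell^\fz$-type functional attached to $\vec S^{s,\,s,\,\sz}$ (resp.\ $\vec A^{s,\,1}$). The only points deserving a line of care are the endpoint conventions of Definition~\ref{d3.2} in the cases $q=\fz$ and $p=q=\fz$, and the standard measurability remark needed to replace the supremum over all $t>0$ in $u^{\sharp,\,s}$ (and $u^{\sharp,\,s}_\sz$) by a supremum over a countable set of scales.
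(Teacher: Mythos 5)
Your proposal is correct and follows exactly the route the paper intends: the corollary is stated there as an immediate consequence of Theorems~\ref{t3.1} and \ref{t3.2} together with the two identifications $\dot M^s_{p,\,\fz}(\cx)=\dot M^{s,\,p}(\cx)$ and $\|u\|_{\vec S^{s,\,s,\,\sz}\dot F_{p,\,\fz}(\cx)}=\|u^{\sharp,\,s}_\sz(u)\|_{L^p(\cx)}$ recorded just above it, and your write-up simply spells out the parameter bookkeeping (in particular the observation that $p>n/(n+s)$ is equivalent to $1<p_\ast(s)$, and the unwinding of the $q=\fz$ and $p=q=\fz$ conventions in Definition~\ref{d3.2}). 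No gap.
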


\begin{rem}\label{r3.1}\rm
Notice that $\dot M^s_{p,\,p}(\cx)=\dot B^s_{p,\,p}(\cx)$ for all $s\in(0,\,\fz)$ and $p\in(0,\fz]$.
Then (v) through  (vii) of Theorem \ref{t1.1} also give the optimality of the ranges of $\sz$ and $\ez$
in Theorem \ref{t3.1} at least for the spaces $\dot M^s_{p,\,p}(\rn)$.
\end{rem}

\begin{rem}\label{r3.2}\rm

Recall that
Calder\'on \cite{c72}  characterized Sobolev spaces $W^{1,\,p}(\rn)$ for $p\in(1,\,\fz]$
via the fractional sharp maximal function  $u^{\sharp,\,1}$ and
Miyachi \cite{m90} characterized Hardy-Sobolev spaces $H^{1,\,p}(\rn)$ for $p\in(n/(n+1),\,1]$.
For the theory of Sobolev-type spaces $\dot\ccc^{s,\,p}(\rn)$ and their equivalence with 
the usual Triebel-Lizorkin spaces
$\dot F^s_{p,\,\fz}(\rn)$, see \cite{ds84,c84}.
Shvartsman \cite{s07} introduced the spaces $\dot\ccc^{s,\,p}(\cx)$ on metric measure spaces
for $s\in(0,\,1]$ and $p\in(1,\,\fz]$.
See \cite{t92} and the references therein for
the fractional sharp maximal function characterization of general Triebel-Lizorkin spaces
on $\rn$.

The pointwise Sobolev spaces on metric measure spaces were
introduced and the pointwise characterization of $W^{1,\,p}(\rn)$ for $p\in(1,\,\fz]$
was established by Haj\l asz \cite{h96,h03}.
The pointwise characterization for the Hardy-Sobolev spaces $H^{1,\,p}(\rn)$
for $p\in(n/(n+1),\,1]$ was given by Koskela and Saksman \cite{ks08}.
Yang \cite{y03} established a pointwise characterization for the
Triebel-Lizorkin spaces $\dot F^{s}_{p,\,\fz}(\cx)$ for $s\in(0,\,1)$
and $p\in(1,\,\fz]$ on Ahlfors regular spaces.
For the pointwise characterization   $\dot M^s_{p,\,q}(\cx)=\dot F^{s}_{p,\,q}(\cx)$
on a metric measure space satisfying doubling and reverse doubling properties,
when $s\in(0,\,1)$ and $ p,\,q\in(n/(n+s),\,\fz]$,
see \cite{kyz09,kyz2};  also see \cite{hmy08,g02,my09} for some other characterizations.
Above, the spaces $\dot F^s_{p,\,q}(\cx)$ are defined using kernel functions. 

By Theorem \ref{t3.1}, Theorem \ref{t3.2} and Corollary \ref{c3.1},
  Haj\l asz-Triebel-Lizorkin spaces appear to be a natural substitute of Triebel-Lizorkin spaces
on a metric measure space satisfying the doubling property.
\end{rem}

\section{Triviality and nontriviality\label{s4}}

We say that $\cx$ supports a {\it weak $(1,\,p)$-Poincar\'e inequality}
with $p\in[1,\,\fz)$  if
there exist positive constants $C$ and $\lz>1$ such that for all
functions $u$,  $p$-weak upper gradients $g$ of $u$ and balls $B$ with radius $r>0$,
$$\bint_B|u(x)-u_B|\,d\mu(x)\le C r
\lf\{\bint_{\lz B}[g(x)]^p\,d\mu(x)\r\}^{1/p}.$$
Recall that a nonnegative Borel function $g$ is called a {\it $p$-weak upper gradient
of $u$} if
\begin{equation}\label{e4.1}
|u(x)-u(y)|\le \int_\gz g\,ds
\end{equation}
for all $\gz\in\Gamma_{\rm rect}\setminus \Gamma$,
where $x$ and $y$ are the endpoints of $\gz$,
$\Gamma_{\rm rect}$ denotes the collection of non-constant compact rectifiable  curves
and $\Gamma$ has $p$-modulus zero.
If $\cx$ is complete, the above Poincar\'e inequality holds if and only if
it holds for each Lipschitz function with the pointwise Lipschitz constant
$$\lip (u)(x)=\limsup_{r\to0}\sup_{y\in B(x,\,r)}\frac{|u(x)-u(y)|}r$$
on the right-hand side. See \cite{hk98} for more details.

By triviality of $\dot N^s_{p,\,q}(\cx)$ or $\dot M^s_{p,\,q}(\cx)$
below we mean that they only contain constant functions.
In order to obtain such a conclusion, one needs some connectivity assumption on $\cx$;
simply consider $B(0,\,1)\cup B(x_0,\,1)$ where $x_0\in\rn$ and $|x_0|>3$, 
equipped with the Euclidean distance and Lebesgue measure. Then 
$\chi_{B(0,\,1)}\in \dot M^s_{p,\,q}(\cx)\cap\dot N^s_{p,\,q}(\cx)$ for all $s,\,p,\,q$.
Notice that  $\cx$ does not support any Poincar\'e inequality.

\begin{thm}\label{t4.1}
Suppose that $\cx$  supports a
weak $(1,\,p)$-Poincar\'e inequality with $p\in(1,\,\fz)$.
Then for all $q\in(0,\,\fz)$, $\dot N^1_{p,\,q}(\cx)$ and $\dot M^1_{p,\,q}(\cx)$ are trivial.
\end{thm}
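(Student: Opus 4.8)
The plan is to show that any $u\in\dot N^1_{p,q}(\cx)$ or $u\in\dot M^1_{p,q}(\cx)$ with $q<\fz$ must satisfy $\|u\|=0$, which forces $u$ to be constant (using the connectivity implied by the Poincar\'e inequality). The key point is that the Poincar\'e inequality upgrades a fractional $1$-Haj\l asz gradient into a genuine $p$-weak upper gradient, but with a crucial gain: a discrete sequence $\vec g=\{g_k\}\in\bd^1(u)$ that is $\ell^q$-summable in the scale variable, with $q$ finite, turns out to be far more information than is consistent with being in a first-order space. Roughly, if $u$ is not constant, then on some ball it has nontrivial oscillation at every scale, which via Poincar\'e would require $\sum_k\|g_k\|_{L^p}^q$ (or the $L^p(\ell^q)$ norm) to diverge because the ``gradient content'' at distinct scales cannot all be small simultaneously.

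More concretely, first I would recall from Theorem \ref{t1.2} (and Definition \ref{d1.4}) that $\dot N^1_{p,q}(\cx)=\dot B^1_{p,q}(\cx)$, so it suffices to treat $\dot B^1_{p,q}$, and similarly $\dot M^1_{p,q}$ via $\bd^1$-gradients directly. Fix $u$ with finite norm and $\vec g\in\bd^1(u)$ (nearly norm-minimizing). For each $k$, the function $G_k\equiv \sup_{j\ge k}g_j$ is, up to the definition of $s$-gradient with $s=1$, almost a $1$-gradient of $u$ at scales $\le 2^{-k}$; more precisely one checks that for $x,y$ with $d(x,y)<2^{-k}$ one has $|u(x)-u(y)|\le d(x,y)[G_k(x)+G_k(y)]$, and a standard chaining/telescoping argument (as in Haj\l asz's theory, cf. Lemma \ref{l2.2} and \cite{h03}) shows $G_k$ is comparable to a $p$-weak upper gradient of $u$ on balls of radius $\lesssim 2^{-k}$. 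Applying the weak $(1,p)$-Poincar\'e inequality on a fixed ball $B$ then gives, for every $k$,
\begin{equation*}
\bint_B|u-u_B|\,d\mu \ls r\lf(\bint_{\lz B}[G_k]^p\,d\mu\r)^{1/p}.
\end{equation*}
The left-hand side is a fixed positive number if $u$ is nonconstant on $B$, so $\|G_k\|_{L^p(\lz B)}\gs 1$ uniformly in $k$. But $G_k=\sup_{j\ge k}g_j\to 0$ in a suitable sense as $k\to\fz$ precisely because $\sum_j\|g_j\|_{L^p}^q<\fz$ (for $\dot N$) forces $g_j\to 0$ in $L^p$, and one can arrange (replacing $\vec g$ by the ``$\ez'''$-regularized'' version $h_k=2^{k\ez'''}\sum_{i\ge k}2^{-i\ez'''}g_i$ exactly as in the proof of Theorem \ref{t2.1}, with $s=1$) that $\|G_k\|_{L^p(\lz B)}$ or $\|h_k\|_{L^p(\lz B)}$ still has finite $\ell^q$-norm in $k$, hence tends to zero along a subsequence — contradiction. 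For $\dot M^1_{p,q}$ the same scheme works with the roles of $L^p$ and $\ell^q$ interchanged: there $\|\,\|\vec g\|_{\ell^q}\,\|_{L^p}<\fz$ with $q<\fz$ forces $\|G_k\|_{L^p}\to 0$ pointwise-in-the-norm sense, again contradicting the uniform lower bound from Poincar\'e.

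The main obstacle, and the step that needs care, is the passage from the \emph{discrete} fractional Haj\l asz gradient $\{g_k\}$ at dyadic scales to a genuine \emph{$p$-weak upper gradient} usable in the Poincar\'e inequality: one must verify that the chaining argument connecting $u(x)$ and $u(y)$ by a curve and integrating $G_k$ along it really produces $\int_\gz G_k\,ds\gs|u(x)-u(y)|$ with constants independent of $k$, and that $G_k$ (or its maximal regularization $h_k$) retains $L^p$-control with $\ell^q$-summability in $k$. This is where the hypothesis $p>1$ enters, since it allows use of the characterization of Poincar\'e inequalities via upper gradients and the self-improvement / maximal function machinery; it is also where finiteness of $q$ is used decisively, to conclude $G_k\to 0$. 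I would organize the proof so that the key estimate is stated as: \emph{for every ball $B$ and every $k$, $\bint_B|u-u_B|\ls r(B)\,\|h_k\|_{L^p(\lz B)}$}, and then observe that the right side is the general term of a convergent series (resp. has convergent $L^p(\ell^q)$ norm), forcing it to zero and hence $u=u_B$ a.e.\ on $B$; since $B$ is arbitrary and $\cx$ is connected enough (a consequence of supporting a Poincar\'e inequality), $u$ is constant.
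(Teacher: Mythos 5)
Your overall shape is right---apply the Poincar\'e inequality to a sequence of candidate upper gradients whose $L^p$-norms tend to zero as the scale shrinks, then conclude $u$ is locally constant---and this is indeed the engine of the paper's proof. For $\dot M^1_{p,\,q}(\cx)$ with $q\ge p$ your plan can be made to work: here $\dot M^1_{p,\,q}(\cx)\subset\dot M^1_{p,\,\fz}(\cx)=\dot M^{1,\,p}(\cx)$, which under a weak $(1,\,p)$-Poincar\'e inequality equals the Newtonian space $\dot N^{1,\,p}(\cx)$, so $u$ is absolutely continuous along $p$-a.e.\ curve (Shanmugalingam); with that in hand one can verify that
$\rho_k(x)=\sup_{j\ge k}2^j\bbint_{B(x,\,2^{-j})}|u-u_{B(x,\,2^{-j})}|\,d\mu$
is a $p$-weak upper gradient via the Bourdon--Pajot shift-averaging trick rather than a naive telescoping (integrating the pointwise two-point inequality along a curve does not by itself produce $\int_\gz\rho_k\,ds$). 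The paper builds this $\rho_k$ directly from averages of $u$, not from a chosen $\vec g$ as you do; for your $G_k=\sup_{j\ge k}g_j$ there is the extra issue, which you flagged, that $\|g_j\|_{L^p}\to0$ does not give $\|G_k\|_{L^p}\to0$, and the $\ez'''$-regularization $h_k$ is the right fix on the $\ell^q(L^p)$ side but you would still need to check the upper gradient property for $h_k$, which is less natural than for $\rho_k$.

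The genuine gap is in the case $\dot N^1_{p,\,q}(\cx)$ with $q\in(p,\,\fz)$. Your route needs $u$ (or a representative $\wz u$) to be absolutely continuous on $p$-a.e.\ curve so that a pointwise two-scale inequality can be converted into the curve integral estimate $|u(x)-u(y)|\le\int_\gz h_k\,ds$. For $q>p$ the embedding $\dot N^1_{p,\,q}(\cx)\subset\dot M^1_{p,\,\fz}(\cx)=\dot M^{1,\,p}(\cx)$ that delivers this regularity is simply false (you only get $\dot N^1_{p,\,q}(\cx)\subset\dot N^1_{p,\,\fz}(\cx)$, and $\dot N^1_{p,\,\fz}$ is not trivial and not inside $\dot M^{1,\,p}$), so you cannot certify that $G_k$, $h_k$, or anything like them is a $p$-weak upper gradient of $u$, and the Poincar\'e inequality is not applicable to $u$ directly. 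The paper circumvents this by working with Lipschitz discrete convolution approximations $u_{\cb_k}=\sum_i u_{B(x_{k,\,i},\,2^{-k})}\vz_{k,\,i}$, for which $\lip u_{\cb_k}\le C\,I^{1,\,1}_{2^{-k+N}}(u)$ is an honest upper gradient; one applies the Poincar\'e inequality to each $u_{\cb_k}$, uses $u_{\cb_k}\to u$ in $L^1_\loc$, and exploits $\|I^{1,\,1}_{2^{-k}}(u)\|_{L^p(\cx)}\to0$ (which does follow from $\|\{I^{1,\,1}_{2^{-k}}(u)\}_k\|_{\ell^q(L^p)}<\fz$ with $q<\fz$) to pass to the limit. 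This Lipschitz approximation idea is the essential ingredient missing from your proposal, and it is precisely the step the authors credit to Saksman and Soto.
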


\begin{proof}
Since for $q\in(0,\,p)$, $\dot M^1_{p,\,q}(\cx)\subset \dot M^1_{p,\,p}(\cx)$
and $\dot N^1_{p,\,q}(\cx)\subset \dot N^1_{p,\,p}(\cx)=\dot M^1_{p,\,p}(\cx)$,
we only need to prove that for $q\in[p,\,\fz)$, $\dot M^1_{p,\,q}(\cx)$ and $\dot N^1_{p,\,q}(\cx)$  are trivial.
Assume that $q\in[p,\,\fz)$. Notice that
$\dot M^1_{p,\,q}(\cx)\subset \dot M^1_{p,\,\fz}(\cx)=\dot M^{1,\,p}(\cx)$,
where $\dot M^{1,\,p}(\cx)$ is the Haj\l asz-Sobolev space \cite{h96}.
Moreover, under the weak $(1,\,p)$-Poincar\'e inequality,
it is known that
$\dot M^{1,\,p}(\cx)=\dot N^{1,\,p}(\cx)$
(see \cite[Theorem 4.9]{s00} and \cite{kz08}), where
$\dot N^{1,\,p}(\cx)$ is the Newtonian Sobolev space introduced in \cite{s00}.
So $\dot M^1_{p,\,q}(\cx)\subset  \dot N^{1,\,p}(\cx)$.
Let $u\in \dot M^1_{p,\,q}(\cx)$. Then $u\in \dot N^{1,\,p}(\cx)$.
The proof of the trivialilty of $\dot M^1_{p,\,q}(\cx)$  is reduced to proving $\|u\|_{\dot N^{1,\,p}(\cx)}=0$.
To this end, it suffices to  find a sequence $\{\rho_k\}_{k\in\nn}$ of $p$-weak upper gradients
of $u$ such that $\|\rho_k\|_{L^p(\cx)}\to 0$ as $k\to\fz$.

For $k\in\nn$, set
$$\rho_k(x)\equiv   \sup_{j\ge k} 2^{j}\bint_{B(x,\,2^{-j})}|u(z)-u_{B(x,\,2^{-j})}| \,d\mu(z).
$$
Then $\rho_k$ is nonnegative Borel measurable function
for all $k\in\nn$. 
Moreover, we have that $\lim_{k\to\fz}\rho_k(x)=0$ for almost all $x\in\cx$. 
Indeed, by a discrete variant of Theorem \ref{t3.1} (ii),
$$ \lf\| \lf\{ I^{1,\,1}_{2^{-j}}(u) \r\}_{j\in\zz}\r \|_{L^p(\cx,\,\ell^q)} 
\sim \|u\|_{\dot M^{1}_{p,\,q}(\cx)}<\fz,$$
which implies that 
$ \| \{ I^{1,\,1}_{2^{-j}}(u)(x)  \}_{j\in\zz}\|_{\ell^q}<\fz$
and hence 
 $\rho_k(x)\le  \|\{  I^{1,\,1}_{2^{-j}}(u)(x) \}_{j\ge k}\|_{\ell^q} \to0 
$
as $k\to\fz$
for almost all $x\in\cx$. 
Moreover, applying the Lebesgue dominated convergence theorem, we have 
$\|\rho_k\|_{L^p(\cx)}\to0$ as $k\to\fz$.

Now it suffices to check that $\rho_k$ is a $p$-weak upper gradient
of $u$.
Observe that if $\rho_k(x)<\fz$, then $\lim_{j\to\fz}u_{B(x,\,2^{-j})}$ exists.
In fact,  we have
$$|u_{B(x,\,2^{-j})}-u_{B(x,\,2^{-\ell})}|\ls 2^{-\min\{j,\,\ell\} } \rho_k(x)\to 0$$
as $j,\,\ell \to \fz$.
For such an $x$, we define
$\wz u(x)\equiv\lim_{j\to\fz}u_{B(x,\,2^{-j})}$.
Generally, for $x\in\cx$, if
$\lim_{j\to\fz} u_{B(x,\,2^{-j})}$ exists,  then we define
$\wz u(x)\equiv\lim_{j\to\fz} u_{B(x,\,2^{-j})}$; otherwise,
 $\wz u(x)\equiv0$. Obviously, $u(x)=\wz u(x)$ for almost all $x\in\cx$,
and hence  $u$ and $\wz u$ generate the same element of $\dot N^{1,\,p}(\cx)$.
Therefore we only need to check that $\rho_k$ is a $p$-weak upper gradient
of $\wz u$.
To this end, notice that for all $x,\,y\in\cx$ with $d(x,\,y) \le 2^{-k-2}$,
we have
$$|\wz u(x)-\wz u(y)|\le d(x,\,y)[\rho_k(x)+\rho_k(y)].$$
Moreover, by \cite[Proposition 3.1]{s00}, $u$ is absolutely continuous on $p$-almost every curve,
namely, $u\circ \gz$ is
absolutely continuous on $[0,\,\ell(\gz)]$ for all arc-length
parameterized paths $\gz\in\Gamma_{\rm rect}\setminus \Gamma$,
where $\Gamma$ has $p$-modulus zero.
For every $\gz\in\Gamma_{\rm rect}\setminus \Gamma$,
we will show that \eqref{e4.1} holds.
To see this, by the absolute continuity of $u$ on $\gz$,
it suffices to show that for $j$ large enough,
$$2^j\lf|\int_0^{2^{-j}}u\circ \gz(t)\,dt-\int_{\ell(\gz)-2^{-j}}^{\ell(\gz)}u\circ \gz(t)\,dt\r|\ls \int_0^{\ell(\gz)}\rho_k\circ\gz(t)\,dz.$$
But,  borrowing some ideas from \cite{bp03}, for $j$ large enough, we have
\begin{eqnarray*}
&&2^j\lf|\int_0^{2^{-j}}u\circ \gz(t)\,dt-\int_{\ell(\gz)-2^{-j}}^{\ell(\gz)}u\circ \gz(t)\,dt\r|\\
&&\quad=2^j\lf|\int_0^{\ell(\gz)-2^{-j}}[u\circ \gz(t+2^{-j}) - u\circ \gz(t)]\,dt\r|\\
&&\quad\le  2^j\int_0^{\ell(\gz)-2^{-j}}\lf|u\circ \gz(t+2^{-j}) - u\circ \gz(t)\r|\,dt\\
&&\quad\ls\int_0^{\ell(\gz)-2^{-j}}\lf[\rho_k\circ \gz(t+2^{-j}) +\rho_k\circ \gz(t)\r]\,dt\\
&&\quad\ls\int_0^{\ell(\gz) } \rho_k\circ \gz(t) \,dt.
\end{eqnarray*}
This means that $\rho_k$ is a $p$-weak upper gradient of $\wz u$.

To prove the triviality of $\dot N^{1}_{p,\,q}(\cx)$ with $q\in(p,\,\fz)$,
for $u\in\dot N^{1}_{p,\,q}(\cx)$, applying Theorem \ref{t2.1}, we have 
$$\|\{I_{2^{-k}}^{1,\,1}(u)\}_{k\in\zz}\|_{\ell^q(L^p(\cx))}\sim\|u\|_{\dot N^1_{p,\,q}(\cx)}<\fz,$$ 
which 
implies that $\| I_{2^{-k}}^{1,\,1}(u) \|_{ L^p(\cx)}\to 0$ as $k\to\fz$. 
For every $k\in\zz$, let $\{x_{k,\,i}\}_{i} $ be  a maximal set of $\cx$
with $d(x_{k,\,i},\,x_{k,\,j})\ge 2^{-k-2}$ for all $i\ne j$. Then $\cb_k=\{B(x_{k,\,i},\,2^{-k})\}_i$
is a covering of $\cx$ with bounded overlap.
 Let $\{\vz_{k,\,i}\}_i$ be a partition of unity
with respect to $\cb_k$ as in \cite[Lemma 5.2]{hkt07}.
We define a discrete convolution approximation to $u$ by  
$u_{\cb_k}=\sum_i u_{B(x_{k,\,i},\,2^{-k})}\vz_{k,\,i}.$
By an argument similar to that of \cite[Lemma 5.3]{hkt07}, we have that 
$u_{\cb_k}\to u$ in $L^p_\loc(\cx)$ and hence in $L^1_\loc(\cx)$  as $k\to\fz$, 
and that $\lip\, u_{\cb_k}(x)\le C I_{2^{-k+N}}^{1,\,1}(u)(x)$ for all $x\in\cx$,
where $C\ge1$ and $N\in\nn$ are constants independent of $k$,  $x$ and $u$. 
Now $CI_{2^{-k+N}}^{1,\,1}(u)$ is an upper gradient of 
$  u_{\cb_k}$. So, for every ball $B=B(x_B,\,r_B)$, by the weak $(1,\,p)$-Poincar\'e inequality, we have 
\begin{eqnarray*}
\bint_B|u(z)-u_B|\,d\mu(z)&&=\lim_{k\to\fz}\bint_B|u_{\cb_k}(z)-(u_{\cb_k})_B|\,d\mu(z)\\
&&
\ls \liminf_{k\to\fz}r_B\lf\{\bint_{B(x_B,\,\lz r_B)} [I_{2^{-k+N}}^{1,\,1}(u)(z)]^p\,d\mu(z)\r\}^{1/p}=0, 
\end{eqnarray*}
which implies that $u$ is a constant on $B$ and hence is a constant function on $\cx$. 
This finishes the proof of Theorem \ref{t4.1}.
\end{proof}

\begin{rem}\label{r4.1}\rm
(i) Under the assumptions of Theorem \ref{t4.1},
$\dot M^{1}_{p,\,\fz}(\cx)$ is not necessarily trivial;
$\dot M^{1}_{p,\,\fz}(\rn)=\dot W^{1,\,p}(\rn)$.
Also $\dot N^{1}_{p,\,\fz}(\cx)$ is not necessarily trivial;
$\dot N^{1}_{p,\,\fz}(\rn)$ contains smooth functions with compact supports.
The argument at the end of the proof of Theorem \ref{t4.1} is due to Eero Saksman and Tom\'as Soto. 

(ii)  Theorem \ref{t4.1} when  $p=q=n$ was established in \cite{bp03}.
\end{rem}

\begin{thm}\label{t4.2}
Suppose that $\cx$   supports a
weak $(1,\,p)$-Poincar\'e inequality with $p\in(1,\,\fz)$.
Let $s\in(1,\,\fz)$.
Then for $q\in(0,\,\fz]$,
 $\dot M^s_{np/(n+ps-p),\,q}(\cx)$ is trivial, and for $q\in(0,\,np/(n+ps-p)]$,
 $\dot N^s_{np/(n+ps-p),\,q}(\cx)$ is trivial.
Moreover, if either $\cx$ is complete or $\cx$
supports a weak $(1,\,p-\ez)$-Poincar\'e inequality for some $\ez\in(0,\,p-1)$,
then for $q\in(np/(n+ps-p),\,\fz]$,
 $\dot N^s_{np/(n+ps-p),\,q}(\cx)$ is trivial.
\end{thm}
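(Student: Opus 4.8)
\emph{Plan.} The idea is to reduce everything to the smoothness-one case settled in Theorem~\ref{t4.1}, exploiting that the smoothness excess $s-1>0$ produces, at small scales, far more than a first-order gradient. Throughout set $p_0:=np/(n+ps-p)$, so that $\frac1{p_0}=\frac1p+\frac{s-1}n$, i.e. $p=(p_0)_\ast(s-1)$ in the notation of \eqref{e1.1}. Since $\|\{g_k\}\|_{\ell^\fz}\le\|\{g_k\}\|_{\ell^q}$ one has $\dot M^s_{p_0,q}(\cx)\subset\dot M^s_{p_0,\fz}(\cx)=\dot M^{s,p_0}(\cx)$ for every $q$, and $\dot N^s_{p_0,q}(\cx)\subset\dot N^s_{p_0,p_0}(\cx)=\dot M^s_{p_0,p_0}(\cx)\subset\dot M^{s,p_0}(\cx)$ when $q\le p_0$; hence the statements about $\dot M^s_{p_0,q}(\cx)$ (all $q$) and about $\dot N^s_{p_0,q}(\cx)$ with $q\le p_0$ all follow once $\dot M^{s,p_0}(\cx)$ is shown to be trivial.

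To prove the latter, fix $u\in\dot M^{s,p_0}(\cx)$, $g\in\cd^s(u)$ with $\|g\|_{L^{p_0}(\cx)}\le2\|u\|_{\dot M^{s,p_0}(\cx)}$, and a small $\tau\in(0,\min\{1,p_0\})$. I would use two facts. First, because $s>1$, a direct computation from \eqref{e2.1} gives, for every ball $B(x,r)$, $\inf_{c\in\rr}\big(\bint_{B(x,r)}|u-c|^\tau\,d\mu\big)^{1/\tau}\ls r^s\big(\bint_{B(x,r)}g^\tau\,d\mu\big)^{1/\tau}\le r^s\cm_\tau g(x)$, whence $I^{1,\tau}_{2^{-k}}(u)(x)\ls 2^{-k(s-1)}\cm_\tau g(x)\to0$ as $k\to\fz$ for a.e. $x$ (note $\cm_\tau g<\fz$ a.e., as $g^\tau\in L^{p_0/\tau}_\loc$ with $p_0/\tau>1$). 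Second, $\cx$, carrying a Poincar\'e inequality, is connected, hence (being doubling) reverse doubling, so it enjoys a local lower mass bound $\mu(B(x,r))\gs r^Q$; combining this with the Sobolev--Poincar\'e inequality of Lemma~\ref{l2.2} and a telescoping over dyadic scales (the Haj\l asz--Koskela Sobolev embedding) puts $u$ in $\dot M^{1,p}_\loc(\cx)$, so on every ball $B$ there is a first-order Haj\l asz gradient $G\in L^p(2B)$ of $u$ with $\|G\|_{L^p(2B)}\ls\|g\|_{L^{p_0}(4B)}$; consequently $I^{1,\tau}_{2^{-k}}(u)\ls\cm_\tau G$ on $B$ \emph{uniformly} in $k$, with $\cm_\tau G\in L^p(B)$. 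With these two facts one runs the argument of Theorem~\ref{t4.1}: $u\in\dot N^{1,p}_\loc(\cx)$; the discrete convolutions $u_{\cb_k}$ (built from the partitions of unity of \cite{hkt07}) satisfy $u_{\cb_k}\to u$ in $L^1_\loc(\cx)$ and $\lip u_{\cb_k}\ls I^{1,\tau}_{2^{-k+N}}(u)$, which tends to $0$ a.e. and is dominated on each $\lz B$ by a fixed $L^p(\lz B)$-function, so $\lip u_{\cb_k}\to0$ in $L^p(\lz B)$ by dominated convergence; the weak $(1,p)$-Poincar\'e inequality then yields, for every ball $B$,
\begin{equation*}
\bint_B|u-u_B|\,d\mu=\lim_{k\to\fz}\bint_B|u_{\cb_k}-(u_{\cb_k})_B|\,d\mu\ls\liminf_{k\to\fz}r_B\Big(\bint_{\lz B}[\lip u_{\cb_k}]^p\,d\mu\Big)^{1/p}=0,
\end{equation*}
so $u$ is constant on every ball, hence on $\cx$.

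For $\dot N^s_{p_0,q}(\cx)$ with $q>p_0$ the reduction to $\dot M^{s,p_0}(\cx)$ is unavailable, and I would instead adapt the Saksman--Soto argument at the end of the proof of Theorem~\ref{t4.1}. By the discrete form of Theorem~\ref{t2.1}(ii), $\{I^{s,\sz}_{2^{-k}}(u)\}_k\in\ell^q(L^{p_0}(\cx))$ for a suitably small $\sz$, so its $L^{p_0}$-tails vanish; inserting this into Lemma~\ref{l2.1} with the choice $\ez=s-1$ (so $(p_0)_\ast(\ez)=p$) and using the lower mass bound from connectedness, one bounds $\|I^{1,1}_{2^{-k}}(u)\|_{L^p(\cx)}$ by $2^{-k(s-1)(1-Q/n)}$ times an $\ell^q$-tail, so in particular $\|I^{1,1}_{2^{-k}}(u)\|_{L^p(\cx)}\to0$. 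Running $\lip u_{\cb_k}\ls I^{1,1}_{2^{-k+N}}(u)$ against the weak $(1,p-\ez_0)$-Poincar\'e inequality --- which holds by the Keith--Zhong self-improvement when $\cx$ is complete, and is assumed otherwise --- then gives $\bint_B|u-u_B|\,d\mu\ls\liminf_k r_B\,\mu(\lz B)^{-1/(p-\ez_0)}\|I^{1,1}_{2^{-k+N}}(u)\|_{L^{p-\ez_0}(\cx)}=0$ for every ball $B$, so $u$ is constant.

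The step I expect to be the main obstacle is the passage from an $L^{p_0}$-bounded $s$-gradient to first-order information with $L^p$-control, $p>p_0$ --- the Sobolev gain --- since in a merely doubling space the relevant (restricted) fractional maximal inequality and the global Haj\l asz--Koskela embedding can fail. What makes the argument go through is that a space supporting a Poincar\'e inequality is automatically connected, hence reverse doubling, so Lemma~\ref{l2.2} and the needed maximal-function bounds become available; extracting from this a uniform $L^p_\loc$-majorant for $I^{1,\tau}_{2^{-k}}(u)$ so that dominated convergence applies, and keeping the exponents straight in the range $q>p_0$ (where the sharper hypothesis on $\cx$ is precisely what lets the Poincar\'e side of that step close), are the remaining delicate points.
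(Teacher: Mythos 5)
Your reduction of the triviality of $\dot M^s_{p_0,\,q}(\cx)$ (all $q$) and of $\dot N^s_{p_0,\,q}(\cx)$ ($q\le p_0$), with $p_0=np/(n+ps-p)$, to that of $\dot M^{s,\,p_0}(\cx)=\dot M^s_{p_0,\,\fz}(\cx)$ is exactly the paper's first step, and your proof that $\dot M^{s,\,p_0}(\cx)$ is trivial, while packaged differently, is essentially equivalent to the paper's. The paper organizes the argument around three cases ($\mu(\cx)<\fz$, Ahlfors $n$-regular with $\mu(\cx)=\fz$, and non-Ahlfors with $\mu(\cx)=\fz$), in the first two using a global fractional-integral bound $\mathcal I_{(s-1)\sz}\colon L^{p_0/\sz}\to L^{p/\sz}$ to embed into $\dot M^1_{p,\,\sz}(\cx)$ and invoking Theorem~\ref{t4.1}, and in the third localizing with a cutoff $\eta$ and a local fractional integral $\mathcal J_\az$, then building a sequence of $p$-weak upper gradients $\wz\rho_k\to0$ in $L^p$; you instead work locally throughout and run the discrete-convolution (Saksman--Soto) argument from the tail of Theorem~\ref{t4.1}. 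The substance --- the pointwise decay $I^{1,\,\tau}_{2^{-k}}(u)\ls 2^{-k(s-1)}\cm_\tau g\to 0$ a.e.\ from $s>1$, a local lower mass bound from connectedness, a local fractional-integration gain from $L^{p_0}$ to $L^p$ providing a uniform local $L^p$ majorant, and the $(1,p)$-Poincar\'e inequality forcing $u$ to be locally constant --- is the same, so this part of your proposal is fine, merely streamlined by committing to the local route from the outset.

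The part for $\dot N^s_{p_0,\,q}(\cx)$ with $q\in(p_0,\,\fz]$ has a genuine gap. You attempt a \emph{direct} argument: derive from the discretized Theorem~\ref{t2.1}(ii) that $\{I^{s,\,\sz}_{2^{-k}}(u)\}_k\in\ell^q(L^{p_0}(\cx))$, plug into Lemma~\ref{l2.1} with $\ez=s-1$, and conclude that $\|I^{1,\,1}_{2^{-k}}(u)\|_{L^{p}}\ls 2^{-k(s-1)(1-Q/n)}$ times an $\ell^q$-tail. That bound is not substantiated: Lemma~\ref{l2.1} applies to a Haj\l asz gradient sequence and, after passing to $L^p$ norms and absorbing the measure ratios via a local lower mass bound $\mu(B(x,r))\gs c\,r^Q$, the exponent you accumulate is of the form $1+Q/p_0-s$, which is \emph{nonnegative} precisely when $Q\ge n\,p(s-1)/(n+p(s-1))$; in particular it is $\ge 0$ for $Q=n$ (the Ahlfors case), so there is no built-in decay. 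More fundamentally, for $q=\fz$ --- which the theorem includes --- there is no vanishing $\ell^q$-tail at all, so the decay cannot come from the tail and your sketched mechanism breaks down. This is precisely where the self-improved $(1,p-\ez)$-Poincar\'e inequality is really used in the paper: one takes $t\in(1,s)$ and $\ez$ small with $p_0=n(p-\ez)/\bigl(n+(p-\ez)(t-1)\bigr)$ (equation \eqref{e4.4}), shows the elementary (local) embedding $\dot N^s_{p_0,\,\fz}(\cx)\subset\dot M^{t}_{p_0,\,\fz}(\cx)$ via $\sup_{j\ge k}I^{t,\,\sz}_{2^{-j}}(u)\sim 2^{k(s-t)}S^{s,\,t,\,\sz}_{2^{-k}}(u)$ on balls, and then applies the already-proven triviality of $\dot M^{t,\,p_0}(\cx)$ with $(p-\ez,t)$ in place of $(p,s)$. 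The gain from $s$ to $t<s$ is what recreates, in the $q=\fz$ setting, the vanishing you were trying to extract from a nonexistent tail, and the self-improvement of the Poincar\'e exponent is what makes the numerology \eqref{e4.4} close. You should replace your direct estimate by this two-stage reduction.
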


\begin{proof}
We first prove the triviality of $\dot M^s_{np/(n+ps-p),\,\fz}(\cx)=\dot M^ {s,\,np/(n+ps-p)}(\cx)$
by considering the following three cases:
Case $\mu(\cx)<\fz$, Case $\mu(\cx)=\fz$ and $\cx$ is Ahlfors $n$-regular,
 and Case $\mu(\cx)=\fz$ but $\cx$ is not
Ahlfors $n$-regular.

{\it Case  $\mu(\cx)<\fz$}. Notice that by \eqref{e2.11},    $2^{-k_0-1}\le \diam\cx< 2^{-k_0}$ for some $k_0\in\zz$.
In this case, it suffices to prove that
$\dot M^{s,\,np/(n+ps-p)}(\cx)\subset \dot M^1_{p,\,\sz}(\cx)$ for some $\sz\in(0,\,p)$; then
the triviality of $\dot M^{s,\,np/(n+ps-p)}(\cx)$ follows from that of $\dot M^1_{p,\,\sz}(\cx)$ as proved by Theorem \ref{t4.1}.
To this end, let $u\in \dot M^{s,\,np/(n+ps-p)}(\cx)$ and let $g\in\cd^s(u)$ with
$\|g\|_{L^{np/(n+ps-p)}(\cx)}\le 2\|u\|_{\dot M^{s,\,np/(n+ps-p)}(\cx)}$.
We claim that there exists $\sz\in(0,\,p)$ such that 
\begin{equation}\label{e4.2}
\|\{I^{1,\,\sz}_{2^{-k}}(u)\}_{k\ge k_0-2}\|_{L^p(\cx,\,\ell^\sz)}\ls \|g\|_{L^{np/(n+ps-p)}(\cx)}. 
\end{equation}
Assume that this claim holds for a moment. By Theorem \ref{t3.1}(ii)
and a variant of Lemma \ref{l2.5}, we have   $u\in\dot M^1_{p,\,\sz}(\cx)$
and $\|u\|_{\dot M^1_{p,\,\sz}(\cx)}\ls \|u\|_{\dot M^{s,\,np/(n+ps-p)}(\cx)}$.

To prove \eqref{e4.2}, by Lemma \ref{l2.2},
\begin{eqnarray*}
\|\{I^{1,\,\sz}_{2^{-k}}(u)(x)\}_{k\ge k_0-2}\|^{\sz}_{\ell^{\sz}}&&=
\sum_{k\ge k_0-2} 2^{k \sz}\inf_{c\in\rr}  \bint_{B(x,\,2^{-k})}|u(z)-c|^{\sz}\,d\mu(z) \\
&&\ls \sum_{k\ge k_0-2} 2^{-k(s-1)\sz}   \bint_{B(x,\,2^{-k})}[g(z)]^\sz\,d\mu(z) \\
&&\ls  \sum_{k\ge k_0-2}\frac{ 2^{-k(s-1)\sz}}{\mu(B(x,\,2^{-k}))}  \sum_{j\ge k}
 \int_{B(x,\,2^{-j})\setminus B(x,\,2^{-j-1})}[g(z)]^\sz\,d\mu(z).
\end{eqnarray*}
Notice that there exists $0<\kz\le n$ such that for $j\ge k$,
$$\mu(B(x,\,2^{-k}))\gs \mu(B(x,\,2^{-j})) 2^{-(k-j)\kz};$$
see \cite{yz09}. Choosing $\sz\in(0,\,p)$ such that $\kz-(s-1)\sz>0$, we have
\begin{eqnarray}\label{e4.3}
&&\|\{I^{1,\,\sz}_{2^{-k}}(u)(x)\}_{k\ge k_0-2}\|^\sz_{\ell^\sz}\\
&&\quad\ls  \sum_{j\ge k_0-2}\frac{1}{\mu(B(x,\,2^{-j}))}
\sum_{ k=k_0-2}^{j}  2^{-k(s-1)\sz}2^{(k-j)\kz}
 \int_{B(x,\,2^{-j})\setminus B(x,\,2^{-j-1})}[g(z)]^\sz\,d\mu(z)\nonumber \\
&&\quad\ls  \sum_{j\ge k_0-2}\frac{2^{-j(s-1)\sz}}{\mu(B(x,\,2^{-j}))}
 \int_{B(x,\,2^{-j})\setminus B(x,\,2^{-j-1})}[g(z)]^\sz\,d\mu(z) \nonumber\\
&&\quad\ls {\mathcal I}_{(s-1)\sz}(g^\sz)(x),\nonumber
\end{eqnarray}
where for $\az\in(0,\,n)$, ${\mathcal I}_\az$ denotes the {\it fractional integral} defined by
$${\mathcal I}_\az(u)(x)\equiv\int_\cx \frac{[d(x,\,y)]^\az}{\mu(B(x,\,d(x,\,y)))}u(y)\,d\mu(y).$$
Therefore,
$$\|\{I^{1,\,\sz}_{2^{-k}}(u)\}_{k\ge k_0-2}\|_{L^p(\cx,\,\ell^\sz)}
\ls \|[{\mathcal I}_{(s-1)\sz}(g^\sz)]^{1/\sz}\|_{L^p(\cx)}\sim \|{\mathcal I}_{(s-1)\sz}(g^\sz)\|^{1/\sz}_{L^{p/\sz}(\cx)}.$$
Notice that for all $x\in\cx$ and $r\le \diam\cx$,
$$\mu(B(x,\,r))\ge C\mu(\cx)\frac{r^n}{(\diam\cx)^n}\gs  r^n.$$
Recall that ${\mathcal I}_\az$ is bounded from $L^p(\cx)$ to $L^{p_\ast(\az)}(\cx)$
for all $p\in(1,\,n/\az)$; see, for example, \cite[Theorem 3.22]{hei}.
We have
$$\|\{I^{1,\,\sz}_{2^{-k}}(u)\}_{k\ge k_0-2}\|_{L^p(\cx,\,\ell^\sz)}
\ls   \| g^\sz\|^{1/\sz}_{L^{np/(n+ps-p)\sz}(\cx)}\sim
\|g\|_{L^{np/(n+ps-p)}(\cx) },$$
which  gives \eqref{e4.2}.

{\it Case $\mu(\cx)=\fz$ and $\cx$ is Ahlfors $n$-regular}. Recall that  $\cx$ is {\it Ahlfors $n$-regular}
if for all $x\in\cx$ and $r>0$,
$$\mu(B(x,\,r))\sim r^n.$$
Observe that  the fractional integral ${\mathcal I}_\az$ is still bounded from
$L^p(\cx)$ to $L^{p_\ast(\az)}(\cx)$, and hence by an argument similar to  above, we have
$\dot M^{s,\,np/(n+ps-p)}(\cx)\subset \dot M^1_{p,\,\sz}(\cx)$ for some $\sz\in(0,\,p)$,
which implies the triviality of $\dot M^{s,\,np/(n+ps-p)}(\cx)$.

{\it Case $\mu(\cx)=\fz$ but $\cx$ is not Ahlfors $n$-regular}.
Notice  that, in this case, we do not have the boundedness from
$L^p(\cx)$ to $L^{p_\ast(\az)}(\cx)$ of the fractional integral ${\mathcal I}_\az$
and hence we cannot prove
$\dot M^{s,\,np/(n+ps-p)}(\cx)\subset \dot M^1_{p,\,\sz}(\cx)$ for some $\sz\in(0,\,p)$ as above.
But the ideas of an imbedding as above and the proof of Theorem 4.1 still work here for a localized version.
Indeed, we will show that any function $u\in\dot M^{s,\,np/(n+ps-p)}(\cx)$ is constant on every ball of $\cx$,
which implies that $u$ is a constant function on whole $\cx$.

To this end, let $x_0\in\cx$, $k_0 $ be a negative integer and
let $\eta$ be a cutoff functions such that $\eta(x)=1-\dist(x,\, B(x_0,\,2^{-k_0}))$ on $B(x_0,\,2^{-k_0+1})$
and  $\eta(x)=0$ on $\cx\setminus B(x_0,\,2^{-k_0+1})$. Observe that $\eta(x)=1$ on $B(x_0,\,2^{-k_0})$.

For every $u\in\dot M^{s,\,np/(n+ps-p)}(\cx)$ with $g\in\cd^s(u)\cap L^{np/(n+ps-p)}(\cx)$,
we first claim that $u\eta\in \dot M^{1,\,p}(\cx)$.
Indeed,
if $x,\,y\in B(x_0,\,2^{-k_0+2})$,
\begin{eqnarray*}
 |u(x)\eta(x)-u(y)\eta(y)|&&=|u(x)-u(y)|\eta(x)+|u(x)||\eta(x)-\eta(y)|\\
&&\le |u(x)-u(y)|+d(x,\,y)[|u(x)|+|u(y)|]\\
&&\le d(x,\,y)[|u(x)|+|u(y)|+h(x)+
h(y)]
\end{eqnarray*}
with $h=\chi_{B(x_0,\,2^{-k_0+2})}\sum_{j\ge k_0-4}2^{(k_0-j)}I^{1,\,\sz}_{2^{-j}}(u)$;
if $x,\,y\in \cx\setminus B(x_0,\,2^{-k_0+1})$,
$u(x)\chi(x)=0=u(y)\chi(y)$;
if $x \in B(x_0,\,2^{-k_0+1})$ and
 $y\in \cx\setminus B(x_0,\,2^{-k_0+2})$ or
$y \in B(x_0,\,2^{-k_0+1})$ and
 $x\in \cx\setminus B(x_0,\,2^{-k_0+2})$,  then
by $d(x,\,y)\ge2^{-k_0}$, we have
$$|u(x)\chi(x)-u(y)\chi(y)|=|u(x)|+|u(y)|\le 2^{k_0}d(x,\,y)[|u(x)|+|u(y)|].$$
This means that $(u\chi_{B(x_0,\,2^{-k_0+2})}+h)\in \cd^1(u)$ modulo a constant depending on $k_0$.
Notice that, by Lemma \ref{l2.2},
$u\in L^p_\loc(\cx)$. So to obtain  $(u\chi_{B(x_0,\,2^{-k_0+2})}+h)\in L^p(\cx)$, it suffices to prove that
$h\in  L^p(\cx)$. For $\az\in(0,\,n)$, define the {\it local fractional integral} by
$${\mathcal J}_\az(g)(x)=\int_{d(x,\,y)\le 2^{-k_0+4}} \frac{[d(x,\,y)]^\az}{\mu(B(x,\,d(x,\,y)))}g(y)\,d\mu(y).$$
By an argument similar to that of \eqref{e4.3}, for $x\in B(x_0,\,2^{-k_0+2})$, we still have
\begin{eqnarray*}
h(x)\le \|\{I^{1,\,\sz}_{2^{-k}}(u)(x)\}_{k\ge k_0-4}\|_{\ell^\sz}\ls
\lf[{\mathcal J}_{(s-1)\sz}(g^\sz\chi_{B(x_0,\,2^{-k_0+4})})(x)\r]^{1/\sz}.
\end{eqnarray*}
Obviously,  ${\mathcal J}_{(s-1)\sz}(g^\sz\chi_{B(x_0,\,2^{-k_0+4})})$ is supported in $B(x_0,\,2^{-k_0+8})$.
Moreover,  by an argument similar to that of \cite[Theorem 3.22]{hei}, for $\az\in(0,\,n)$
one can prove that  ${\mathcal J}_\az$ is bounded from
$L^p(B(x_0,\,2^{-k_0+4}))$ to $L^{p_\ast(\az)}(\cx)$ with its operator norm depending
on $k_0$, $x_0$, $\az$ and $\cx$.
This together with an argument similar to that for the case $\mu(\cx)<\fz$  implies that $h\in L^p(\cx)$ and hence the claim that $u\eta\in \dot M^{1,\,p}(\cx)$.

For $k\in\nn$, set
$$\wz \rho_k(x)\equiv   \sup_{j\ge k} 2^{j }\lf(\inf_{c\in\rr}\bint_{B(x,\,2^{-j})}|(u\eta)(z)-c|^\sz \,d\mu(z)\r)^{1/\sz}.
$$
Since $u\eta\in \dot M^{1,\,p}(\cx)$, as what we did in the proof of Theorem \ref{t4.1},
we can show that $\wz\rho_k$ is a $p$-weak upper gradient
of $\wz {u\eta}$. Notice that $\wz {u\eta}(x)= u(x)\eta(x)=u(x)$ for almost all $x\in B(x_0,\,2^{-k_0})$,
and that for all $x\in B(x_0,\,2^{-k_0-1})$ and $j\ge k\ge k_0$,
 $(u\eta)_{B(x,\,2^{-j})}=u_{B(x,\,2^{-j})}$, and hence
$\wz \rho_k(x)=  \sup_{j\ge k}  I^{1,\,\sz}_k(u)(x).$ 
Moreover, \begin{eqnarray*}
\lf\|\lf\{I^{1,\,\sz}_{2^{-k}}(u)\r\}_{k\ge k_0}\r\|_{L^p(B(x_0,\,2^{-k_0-1}),\,\ell^\sz)}
&& \ls \lf\|\lf[{\mathcal J}_{(s-1)\sz}(g^\sz\chi_{B(x_0,\,2^{-k_0+4})})\r]^{1/\sz}\r\|_{L^p(\cx)}\\
&& \ls\|g\|_{L^{np/(n+ps-p)}(B(x_0,\, 2^{-k_0+4}))}<\fz,
\end{eqnarray*}
 which implies that $\|\{I^{1,\,\sz}_{2^{-k}}(u)(x)\}_{k\ge k_0}\|_{\ell^\sz}<\fz$  
and hence  $\wz \rho_k(x)\le \|\{I^{1,\,\sz}_{2^{-j}}(u)(x)\}_{j\ge k}\|_{\ell^\sz}\to0$ as $k\to\fz$
for almost all $x\in B(x_0,\,2^{-k_0-1})$. 
Then by the Lebesgue dominated convergence theorem, we have 
$\|\wz \rho_k\|_{L^p(B(x_0,\,2^{-k_0-1}))}\to0$ as $k\to\fz$. 
Applying the Poincar\'e inequality, we obtain
\begin{eqnarray*}
&&\inf_{c\in\rr}\bint_{B(x_0,\,2^{-k_0-1}/\lz)}|u(z)-u_{ B(x,\,2^{-k_0-1}/\lz)}| \,d\mu(z)\\
&&\quad=\inf_{c\in\rr}\bint_{B(x_0,\,2^{-k_0-1}/\lz)}|(u\eta)(z)-(u\eta)_{B(x,\,2^{-k_0-1}/\lz)}| \,d\mu(z)\\
&&\quad\ls\lf( \bint_{B(x_0,\,2^{-k_0-1})}\wz \rho_k^p(z) \,d\mu(z)\r)^{1/p}\to 0.
\end{eqnarray*}
This means that
$u$ is a constant on $B(x,\,2^{-k_0-1}/\lz)$.
Since $k_0$ is arbitrary, we conclude that $u$ is a constant function.

Moreover, for   $q\in(0,\,\fz]$,  the triviality of  $\dot M^s_{np/(n+ps-p),\,q}(\cx)$ follows from
$$\dot M^s_{np/(n+ps-p),\,q}(\cx)\subset \dot M^{s,\,np/(n+ps-p)}(\cx).$$
Meanwhile, for   $q\in(0,\,np/(n+ps-p)]$,
the triviality of $\dot N^s_{np/(n+ps-p),\,q}(\cx)$ follows from
$$\dot N^s_{np/(n+ps-p),\,q}(\cx)\subset \dot M^s_{np/(n+ps-p),\,np/(n+ps-p)}(\cx)\subset
\dot M^{s,\,np/(n+ps-p)}(\cx).$$

Finally, we prove  the triviality of $\dot N^s_{np/(n+ps-p),\,q}(\cx)$ for $q\in(np/(n+ps-p),\,\fz]$.
In fact, it follows from the triviality of $\dot N^s_{np/(n+ps-p),\,\fz}(\cx)$ since
$\dot N^s_{np/(n+ps-p),\,q}(\cx)\subset  \dot N^s_{np/(n+ps-p),\,\fz}(\cx)$.
To see the triviality of $\dot N^s_{np/(n+ps-p),\,\fz}(\cx)$,
we need the additional  condition that $\cx$ supports a weak $(1,\,p-\ez)$-Poincar\'e inequality
for some $\ez\in(0,\,p-1)$. Recall  from \cite{kz08} that
if $\cx$ is complete and supports the weak $(1,\,p)$-Poincar\'e inequality,
then $\cx$ supports a weak $(1,\,p-\ez)$-Poincar\'e inequality
for some $\ez\in(0,\,p-1)$.  Without loss of generality, we can ask $\ez$ close to $0$
such that $$t\equiv s+\frac np- \frac n{p-\ez}>1.$$
Observe that
\begin{equation} \label{e4.4}
 \frac{np}{ n+p(s-1)}=\frac{n(p-\ez)}{n+(p-\ez)(t-1)}.
\end{equation}
Now we will consider the following two cases:
$\mu(\cx)<\fz$ and $\mu(\cx)=\fz$.

{\it Case $\mu(\cx)<\fz$.}  Assume that $2^{-k_0-1}\le\diam\cx< 2^{-k_0}$ for some $k_0\in\zz$.
We claim that $\dot N^s_{np/(n+ps-p),\,\fz}(\cx)\subset \dot M^{t}_{np/(n+ps-p),\,\fz}(\cx)$
for any $t\in(1,\,s)$.
Indeed, for every $u\in \dot N^s_{np/(n+ps-p),\,\fz}(\cx)$,
\begin{equation} \label{e4.5}
 \lf\|\sup_{k\ge k_0-2} I^{t,\,\sz}_{2^{-k}}(u)\r\|_{L^p(\cx)}
 \sim\lf\|\sup_{k\ge k_0-2} 2^{-k(s-t)}I^{s,\,\sz}_{2^{-k}}(u)\r\|_{L^p(\cx)}
\ls \lf\| S^{s,\,t,\,\sz}_{2^{-k_0+2}}(u)\r\|_{L^p(\cx)}.
\end{equation}
Since
$$\|u\|_{\dot N^s_{np/(n+ps-p),\,\fz}(\cx)}\sim\sup_{k\ge k_0-2}\lf\| S^{s,\,t,\,\sz}_{2^{-k}}(u)\r\|_{L^p(\cx)}$$
and
$$\|u\|_{\dot M^t_{np/(n+ps-p),\,\fz}(\cx)}\sim
 \lf\|\sup_{k\ge k_0-2} I^{t,\,\sz}_{2^{-k}}(u)\r\|_{L^p(\cx)},$$
we conclude that
$\|u\|_{\dot M^t_{np/(n+ps-p),\,\fz}(\cx)}\ls  \|u\|_{\dot N^s_{np/(n+ps-p),\,\fz}(\cx)} $
and hence our claim.
Then the triviality of $\dot N^s_{np/(n+ps-p),\,\fz}(\cx)$ follows from that of
$\dot M^{t}_{n(p-\ez)/[n+(p-\ez)(t-1)],\,\fz}(\cx)$ and \eqref{e4.4}.

{\it Case $\mu(\cx)=\fz$.}
Since the constant in \eqref{e4.5} depends on $k_0$ and hence the diameter of $\cx$,
 we can not get the imbedding
 $\dot N^s_{np/(n+ps-p),\,\fz}(\cx)\subset \dot M^{t}_{np/(n+ps-p),\,\fz}(\cx)$
for  $t\in(1,\,s)$.
But for any fixed $x_0\in\cx$ and $k_0\in\zz$,
we still have
\begin{eqnarray*}
 \lf\|\sup_{k\ge k_0-16} I^{t,\,\sz}_{2^{-k}}(u)\r\|_{L^p(B(x_0,\,2^{-k_0+8}))}
&&\sim\lf\|\sup_{k\ge k_0-16} 2^{-k(s-t)}I^{s,\,\sz}_{2^{-k}}(u)\r\|_{L^p(B(x_0,\,2^{-k_0+8}))}\\
&&
\ls \lf\| S^{s,\,t,\,\sz}_{2^{-k_0+16}}(u)\r\|_{L^p(B(x_0,\,2^{-k_0+8}))}<\fz,\nonumber
\end{eqnarray*}
which further means that $u\in M^{t,\, n(p-\ez)/[n+(p-\ez)(t-1)]}(B(x_0,\,2^{-k_0+8}))$.
With the weak $(1,\,p-\ez)$-Poincar\'e inequality in hand,
by adapting the arguments in {\it Case $\mu(\cx)$ but $ \cx$ is not Ahlfor $n$-regular} as above,
we still can prove that $u$ is constant on ball $B(x_0,\,2^{k_0-1}/\lz)$.
Hence $u$ is a constant function on whole $\cx$. We omit the details.
This finishes the proof of Theorem \ref{t4.2}.
\end{proof}

Finally, we give an example to show  the ``necessity'' of the weak $(1,\,n)$-Poincar\'e inequality
to ensure the triviality of $\dot B^s_{n/s,\,n/s}(\cx)$ for $s\in[1,\,\fz)$.
\begin{thm}\label{t4.3}
For each $p\in(2,\,\fz)$,
there exists an Ahlfors $2$-regular space $\cx$ such that $\cx$ supports a weak $(1,\,p)$-Poincar\'e inequality
but for every $s\in(0,\,\fz)$, $\dot B^s_{2/s,\,2/s}(\cx)$ is not trivial.
\end{thm}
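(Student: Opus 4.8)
\emph{Construction of $\cx$.} The plan is to realise $\cx$ as the boundary at infinity of a suitable Gromov hyperbolic building (or an equivalent self-similar combinatorial model), in the spirit of Bourdon--Pajot \cite{bp03}. For the given $p\in(2,\fz)$ one chooses the branching/thickness parameters of the building so that $\cx$, equipped with a visual metric $d$ and the natural self-similar measure $\mu$, is a complete geodesic space that is Ahlfors $2$-regular and supports a weak $(1,\,p)$-Poincar\'e inequality: Ahlfors $2$-regularity is immediate from the self-similar measure, and the Poincar\'e inequality follows from the standard ``thick pencils of curves'' criterion, the density of the bottlenecks being the free parameter that fixes the Poincar\'e exponent at $p$. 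I would first isolate these two structural properties (quoting the building-theoretic input), and note in passing that by Theorems \ref{t4.1} and \ref{t4.2} such an $\cx$ necessarily fails the weak $(1,\,2)$-Poincar\'e inequality, so the example genuinely lies outside the scope of those theorems.

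\emph{Reduction to Haj\l asz gradients.} By Theorem \ref{t1.2} one has $\dot B^s_{2/s,\,2/s}(\cx)=\dot N^s_{2/s,\,2/s}(\cx)$, so it suffices, for each $s\in(0,\fz)$, to exhibit one nonconstant $u$ together with $\vec g=\{g_k\}_{k\in\zz}\in\bd^s(u)$ with
$$\sum_{k\in\zz}\|g_k\|_{L^{2/s}(\cx)}^{2/s}<\fz .$$
Here a useful preliminary reduction: if $u$ is such a function for the exponent $s_0$ and $0<\alpha\le1$, then for $\psi(t)=|t-a_0|^\alpha$ with $a_0$ in the interior of the range of $u$, one checks using $|a^\alpha-b^\alpha|\le|a-b|^\alpha$ and $(a+b)^\alpha\le a^\alpha+b^\alpha$ that $\{g_k^\alpha\}\in\bd^{\alpha s_0}(\psi\circ u)$ and $\|g_k^\alpha\|_{L^{2/(\alpha s_0)}(\cx)}=\|g_k\|_{L^{2/s_0}(\cx)}^\alpha$, so that $\psi\circ u$ is a nonconstant element of $\dot N^{\alpha s_0}_{2/(\alpha s_0),\,2/(\alpha s_0)}(\cx)$. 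Hence it is enough to produce such functions for an unbounded set of exponents $s_0$.

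\emph{The functions.} These come from the self-similar structure: one takes $u$ locally constant away from a nested family of ``bottleneck regions'' $R_j$ (each of diameter $\sim2^{-j}$, arising from the self-similar building structure), across which $u$ is allowed to vary by an amount $\epsilon_j\downarrow0$, interpolated according to the long-detour geometry of $R_j$. The essentially optimal choice $g_k(x)\sim 2^{ks}\,\mathrm{osc}(u,B(x,2^{-k}))$ then lies in $\bd^s(u)$, and the point of the construction is that, because distances inside $R_j$ are inflated at the bottleneck scales, the contribution of $R_j$ to the sum above is $\sim\epsilon_j^{2/s}$ with no dependence on the scale $2^{-j}$; taking $\{\epsilon_j\}$ with $\sum_j\epsilon_j^{2/s}<\fz$ but $u$ nonconstant then closes the estimate. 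I would carry this out scale by scale, using the doubling property of $\mu$ to control the measures of the relevant annuli.

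\emph{Main obstacle.} The hard step is precisely this critical summability $\sum_k\|g_k\|_{L^{2/s}(\cx)}^{2/s}<\fz$ and its compatibility with the Poincar\'e inequality. Since $\cx$ is Ahlfors $2$-regular, every ball $B(x,2^{-k})$ has measure $\sim2^{-2k}$; consequently no piecewise constant $u$ can work (its scale-$2^{-k}$ jump set has measure $\gs2^{-2k}$, forcing $2^{2k}\|g_k\|_{L^{2/s}(\cx)}^{2/s}\gs1$ for infinitely many $k$), and for $s\ge1$ neither does a function with a single ordinary transition across a thick region. One genuinely needs $u$ continuous with $\mathrm{osc}(u,B(x,2^{-k}))$ decaying, in $L^{2/s}$-average, strictly faster than $2^{-ks}$, which is possible only where $\cx$ has large distances relative to measure, i.e.\ at its bottlenecks. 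Arranging this simultaneously for all $s$ with the same fixed $p$ is what forces the bottleneck geometry to be snowflake-like over every finite band of scales but over no infinite band (an infinite band would destroy every Poincar\'e inequality), and balancing this against retaining the weak $(1,\,p)$-Poincar\'e inequality is exactly what the building construction of \cite{bp03}-type is engineered to deliver; where the two constraints cannot be met directly for very large $s$, one falls back on the H\"older-composition reduction of the second paragraph or on passing to a suitable countable wedge of such buildings.
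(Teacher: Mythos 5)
Your high-level strategy is sound and matches the paper's at the outset: fix $\cx$ with the two structural properties, invoke Theorem \ref{t1.2} to trade $\dot B^s_{2/s,\,2/s}(\cx)$ for $\dot N^s_{2/s,\,2/s}(\cx)$, and then exhibit one nonconstant $u$ with a Haj\l asz gradient sequence $\vec g\in\bd^s(u)$ whose $\ell^{2/s}(L^{2/s})$-norm is finite. You also correctly observe (and this is a nice sanity check that the paper does not spell out) that such a $\cx$ cannot support a weak $(1,\,2)$-Poincar\'e inequality, and your H\"older--composition reduction $\psi\circ u$ with $\psi(t)=|t-a_0|^\alpha$, which sends data for exponent $s_0$ to data for exponent $\alpha s_0$ with equal critical norm, is correct and genuinely new relative to the paper's proof. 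However, as written there is a genuine gap, and you name it yourself: the construction of $\cx$ and of $u$ is never made concrete, and the critical summability $\sum_k\|g_k\|_{L^{2/s}(\cx)}^{2/s}<\fz$ is identified as ``the hard step'' but never carried out. ``Choosing branching parameters of a hyperbolic building'' and the ``thick pencils of curves'' criterion are stated as black boxes without the quantitative relation between the geometry, the Poincar\'e exponent $p$, and the summability you need; the fall-back on ``a countable wedge of such buildings'' is not justified either. Also note that your H\"older reduction only moves the exponent \emph{downward} ($\alpha\le1$), so it helps only if you can produce nonconstant examples for an unbounded set of $s_0$, which is precisely the part you leave open.

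For comparison, the paper's construction is concrete and elementary: starting from the Cantor set $E_\az\subset[0,1]$ with dimension $d_\az=\log2/\log(2/\az)$, it takes $\cx_\az$ to be $E_\az$ together with the squares built on the complementary intervals. Ahlfors $2$-regularity is immediate, and the weak $(1,\,p)$-Poincar\'e inequality for $p>(2-d_\az)/(1-d_\az)$ is cited from Koskela--MacManus, which lets one choose $\az$ as a function of the given $p$. The nonconstant test function is the Cantor staircase $u(x)=\ch^{d_\az}([0,x_1]\cap E_\az)$, which is locally constant on each square, and the paper takes the single $s$-gradient $g(x)=2[d(x,E_\az)]^{d_\az-s}$ with an explicit computation, summing over the $2^j$ squares of level $j$, showing $g\in L^q(\cx_\az)$ exactly when $q<(2-d_\az)/(s-d_\az)$. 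This is precisely the ``bottleneck'' mechanism you describe qualitatively, but the paper's version of it is computable and tied concretely to the parameter $\az$. I'd suggest reworking your draft around an explicit self-similar planar model of this kind (or the one from Koskela--MacManus directly); once the gradient is explicit, the summability becomes a one-line geometric series exactly as in the paper, and your H\"older-composition remark can then be used, if you wish, to propagate the result downward from the range of $s$ where the direct estimate is cleanest.
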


\begin{proof}
Let $\az\in(0,\,1)$ and
$E_\az$ be the cantor set in $[0,\,1]$ obtained by first removing an interval of length $1-\az$
and leaving two intervals of length $\az/2$ and then continuing inductively.
The Hausdorff dimension $d_\az$ of $E_\az$ is $\log 2/\log(2/\az)$.
The space $\cx_\az$ is obtained by replacing each of the complementary intervals of $E_\az$
by a closed square having that interval as one of its diagonals. Then
$\cx_\az$ is Ahlfors 2-regular with respect to Euclidean distance and by \cite[Theorem 3.1]{km98},
 for any $$p>\frac{2-d_\az}{1-d_\az}=2+\frac{\log2}{-\log\az},$$
$\cx_\az$ supports the $(1,\,p)$-Poincar\'e inequality.

So for any $p>2$, choosing $\az\in(0,\,2^{-1/(p-2)})$, we know that $\cx_\az$ supports the weak $(1,\,p)$-Poincar\'e inequality.
Moreover, for any $x=(x_1,\,x_2)\in\cx_\az$, define the Cantor function by
$u(x)=\ch^{d_\az}([0,\,x_1]\cap E_\az)$. Then $u$ is constant on each square generating $\cx_\az$ and moreover,
$|u(x)-u(y)|\le |x_1-y_1|^{d_\az}\ls[d(x,\,y)]^{d_\az}$ for all $x,\,y\in\cx_\az$ (see \cite{hkt07}).
For $s>d_\az$, taking $g(x)=2[d(x,\,E_\az)]^{d_\az-s}$ for all $x\in\cx_\az$, we have $g\in \cd^s(u)$.
We claim that $g\in L^{q}(\cx_\az)$ if  $$0<q<\frac{2-d_\az}{s-d_\az}=\frac{\log 2-2\log \az}{(s-1)\log 2-s\log\az}.$$
Indeed, on each square $Q\subset\cx_\az$ with diagonal length $2^{-j}\az^j(1-\az)$, we have
$$\int_Q [g(x)]^q\,dx\ls  [2^{-j}\az^{j}]^{(d_\az-s)q+2}$$
since $(d_\az-s)q+1>-1$, namely, $q<2/(s-d_\az)$ which is given by $q<(2-d_\az)/(s-d_\az).$
Observing that there are $2^j$   such squares, we   have
 $$\int_{\cx_\az} [g(x)]^q\,dx\ls  \sum_{j\ge 1}2^j[2^{-j}\az^{j}]^{(d_\az-s)q+2}\ls
 \sum_{j\ge 1} 2^{j-j[(d_\az-s)q+2](1-\log\az/\log2)}<\fz,$$
where in the last inequality we use
$$1-[(d_\az-s)q+2](1-\log\az/\log2)= 1-[(d_\az-s)q+2]/d_\az<0,$$
which is equivalent to
$q<(2-d_\az)/(s-d_\az).$
Thus $u\in \dot M^{s,\,q}(\cx_\az)$.
In particular,  taking $q=2/s$ for each $s\in(d_\az,\,\fz)$, we know that $\dot M^{s,\,2/s}(\cx_\az)$
are nontrivial. Notice that $\dot M^{s,\,2/s}(\cx_\az)\subset  \dot B^{1}_{2 ,\,2}(\cx_\az)$ when $s>1$.
Similarly, when $0<s<1$, $\dot M^{1,\,2}(\cx_\az)\subset  \dot B^{s}_{2/s ,\,2/s}(\cx_\az)$, and moreover,
$\dot B^{s}_{2/s ,\,2/s}(\cx_\az)$ contains the restriction of any function in $\dot B^{s}_{2/s ,\,2/s}(\rn)$ to
$\cx_\az$. Then
$\dot B^{s}_{2/s,\,2/s}(\cx_\az)$ for all $s\in(0,\,\fz)$ are nontrivial. This finishes the proof of Theorem \ref{t4.3}.
\end{proof}

\begin{rem}\label{r4.2}\rm
In the proof of Theorem \ref{t4.3},  we actually proved that $\dot M^{s,\,q}(\cx_\az)$ are nontrivial for $s\in(1,\,\fz)$ and
$$0<q<\frac{2-d_\az}{s-d_\az}.$$ But $(2-d_\az)/(s-d_\az)$ is not critical by Theorem \ref{t4.2}.
So it would be interesting to know whether $ \dot M^{s,\,q}(\cx_\az)$ is trivial or not for $s\in(1,\,\fz)$ and
$$ \frac{2-d_\az}{s-d_\az}\le q\le \frac{2(2-d_\az)}{2s-(s+1)d_\az}\equiv
\frac{2\frac{2-d_\az}{1-d_\az}}{2+(s-1)\frac{2-d_\az}{1-d_\az}},$$
where the last index is  critical  by Theorem \ref{t4.2}.
\end{rem}

\medskip

\noindent{\rm\bf Acknowledgement.}
We would like to thank Eero Saksman and Tom\'as Soto for an argument that allowed us to handle all 
values of $q$ for $\dot N^{1}_{p,\,q}(\cx)$ in Theorem \ref{t4.1}.

\noindent Amiran Gogatishvili

\medskip

\noindent Institute of Mathematics, Academy of Sciences of the Czech Republic,
\v Zitn\'a 25, 115 67 Prague 1, Czech Republic

\smallskip

\noindent{\it E-mail address}:   \texttt{gogatish@math.cas.cz}

\bigskip

\noindent Pekka Koskela

\medskip

\noindent Department of Mathematics and Statistics,
P. O. Box 35 (MaD),
FI-40014, University of Jyv\"askyl\"a,
Finland
\smallskip

\noindent{\it E-mail address}:   \texttt{pkoskela@maths.jyu.fi}

\bigskip

\noindent Yuan Zhou

\medskip

\noindent Department of Mathematics, Beijing University of Aeronautics and Astronautics, Beijing 100083, P. R. China

and 

\noindent Department of Mathematics and Statistics,
P. O. Box 35 (MaD),
FI-40014, University of Jyv\"askyl\"a,
Finland

\smallskip

\noindent{\it E-mail address}:  \texttt{yuan.y.zhou@jyu.fi}

\end{document}